\theoremstyle{plain}
\newtheorem{thm}{Theorem}[section] 
\theoremstyle{definition}
\newtheorem{defn}[thm]{Definition} 
\newtheorem{lem}[thm]{Lemma}
\newtheorem{note}[thm]{Note}
\newtheorem{prop}[thm]{Proposition}
\newtheorem{rem}[thm]{Remark}
\newtheorem{cor}[thm]{Corollary}
\newtheorem*{thm*}{Theorem}
\newtheorem*{notation*}{Notation}
\begin{document}
\global\long\def\spec{\mathrm{Spec}}
\global\long\def\spa{\mathrm{Spa}}
\global\long\def\spv{\mathrm{Spv}}
\global\long\def\spf{\mathrm{Spf}}
\global\long\def\supp{\mathrm{Supp}}
\global\long\def\id{\mathrm{id}}
\global\long\def\Id{\mathrm{Id}}
\global\long\def\Ha{\mathrm{Ha}}
\global\long\def\Hdg{\mathrm{Hdg}}
\global\long\def\ig{\mathcal{IG}}
\global\long\def\IG{\mathfrak{IG}}
\global\long\def\dlog{\mathrm{dlog}}
\global\long\def\Hom{\mathrm{Hom}}
\global\long\def\frob{\mathrm{Frob}}
\global\long\def\Res{\mathrm{Res}}
\global\long\def\RM{\mathrm{RM}}
\global\long\def\dR{\mathrm{dR}}
\global\long\def\Tate{{\rm Tate}}

\global\long\def\c{\mathfrak{c}}
\global\long\def\g{\mathfrak{g}}
\global\long\def\p{\mathfrak{p}}
\global\long\def\q{\mathfrak{q}}
\global\long\def\m{\mathfrak{m}}
\global\long\def\Q{\mathbb{Q}}
\global\long\def\Qp{\Q_{p}}
\global\long\def\Z{\mathbb{Z}}
\global\long\def\Zp{\Z_{p}}
\global\long\def\F{\mathbb{F}}
\global\long\def\Fp{\F_{p}}
\global\long\def\H{\mathbb{H}}
\global\long\def\O{\mathcal{O}}
\global\long\def\E{\mathcal{E}}
\global\long\def\C{\mathbb{C}}
\global\long\def\D{\mathbb{\mathbf{D}}}
\global\long\def\DD{\overline{\D}}
\global\long\def\U{\mathfrak{U}}
\global\long\def\G{\mathbb{G}}
\global\long\def\O{\mathcal{O}}
\global\long\def\X{\mathfrak{X}}
\global\long\def\Y{\mathfrak{Y}}
\global\long\def\V{\mathbb{V}}
\global\long\def\be{^{\flat}}
\global\long\def\FF{\mathcal{F}}
\global\long\def\GG{\mathcal{G}}
\global\long\def\HH{\mathcal{H}}
\global\long\def\gS{\frak{G}}
\global\long\def\sS{\gS}
\global\long\def\bbA{\mathbb{A}}
\global\long\def\M{\mathcal{M}}
\global\long\def\T{\mathbf{T}}
\global\long\def\A{\mathbf{A}}
\global\long\def\R{\mathbb{R}}
\global\long\def\aa{\mathfrak{a}}
\global\long\def\bb{\mathfrak{b}}
\global\long\def\Mtor{{\bf M}^{{\rm tor}}}
\global\long\def\fil{{\rm Fil}}
\global\long\def\gr{{\rm Gr}}
\global\long\def\ff{\frak{f}}
\global\long\def\h{H_{{\rm dR}}^{1}}

\global\long\def\gL{\Lambda}
\global\long\def\gG{\Gamma}
\global\long\def\gO{\Omega}
\global\long\def\Om{\Omega}
\global\long\def\hh{\frak{h}}
\global\long\def\w{\mathbb{W}}

\global\long\def\gl{\lambda}
\global\long\def\gg{\gamma}
\global\long\def\gb{\beta}
\global\long\def\ga{\alpha}
\global\long\def\go{\omega}
\global\long\def\gd{\delta}
\global\long\def\e{\epsilon}
\global\long\def\s{\sigma}

\global\long\def\su{\subseteq}
\global\long\def\ot{\otimes}
\global\long\def\op{\oplus}
\global\long\def\b{\bullet}

\global\long\def\inv{^{\times}}
\global\long\def\ua{^{\ast}}
\global\long\def\da{_{\ast}}

\global\long\def\ad{^{{\rm ad}}}
\global\long\def\rig{^{\mathrm{rig}}}
\global\long\def\na{\nabla}
\global\long\def\dag{^{\dagger}}
\global\long\def\et{\mathrm{\acute{e}t}}
\global\long\def\an{^{\mathrm{an}}}
\global\long\def\can{C\an}
\global\long\def\An{\mathrm{An}}
\global\long\def\art{\mathrm{Art}}
\global\long\def\defi{\mathrm{Def}}
\global\long\def\tor{^{{\rm tor}}}
\global\long\def\an{^{{\rm an}}}
\global\long\def\red{_{{\rm red}}}
\global\long\def\ext{\mathrm{Ext}}
\global\long\def\Rext{\mathrm{R}\ext}

\global\long\def\vg{\underline{\gamma}}
\global\long\def\uo{\underline{\go}}
\global\long\def\oa{\overline{A}}
\global\long\def\ac{A_{\C_{K}}}
\global\long\def\uxi{\underline{\xi}}
\global\long\def\vp{\varpi}
\global\long\def\ud{\underline{\gd}}
\global\long\def\uX{\underline{X}}
\global\long\def\W{\mathcal{W}}
\global\long\def\an{^{\mathrm{an}}}
\global\long\def\k{\kappa}

\title{Modular sheaves of de Rham classes on Hilbert formal modular schemes
for unramified primes}

\author{Giacomo Graziani\thanks{giac.graz@gmail.com}}
\maketitle

\paragraph{Abstract}

We define formal vector bundles with marked sections on Hilbert modular
schemes and we show how to use them to construct modular sheaves with
an integrable meromorphic connection and a filtration which, in degree
0, gives to us a $p$-adic interpolation of the usual Hodge filtration.
We relate it with the sheaf of overconvergent Hilbert modular forms.

\tableofcontents{}

\newpage{}

\section{Introduction}

\subsubsection*{A quick historical recap}

The story of geometric modular forms started with the work of Katz
in \cite{ModIII}, aimed principally to create a unified geometric
framework for various notions of modular forms (e.g. Serre's and overconvergent
modular forms) of integral weight. The main idea was to construct
compactifications of suitable moduli spaces for elliptic curves and
use forms on the universal object to define a sheaf whose sections
are indeed modular forms.

Coleman, in his paper \cite{Col}, addressed a conjecture of Gouvea
predicting that every overconvergent modular form of integral weight
and sufficently small slope is classical and the main tool in his
research was the introduction of the sheaves 
\[
{\cal H}_{k}={\rm Sym}_{\O_{X}}^{k}\mathbf{R}^{1}\pi_{\ast}\Omega_{E/X}^{\b}\left\langle C\right\rangle 
\]
that is the degree $k$-th part of the symmetric algebra constructed
on the first algebraic de Rham cohomology sheaf with log poles at
the fibers above the cusps, where $\pi:E\to X$ is the generalized
universal elliptic curve, toghether with the natural filtration induced
by the Hodge filtration. Taking duals this definition is easily extended
to all classical weights $k\in\Z$ thus providing a triple $\left({\cal H}_{k},\nabla_{k},F^{\b}{\cal H}_{k}\right)$
for each $k\in\Z$, where $\nabla_{\kappa}$ is the Gauss-Manin connection
on ${\cal H}_{k}$. He then accomplishes his task by means of a careful
study of the cohomology of such triples.

In \cite{AI} the authors generalized the construction of Coleman
and defined $p$-adic families of de Rham cohomology classes, having
as a main motivation the extension of the construction of triple product
$p$-adic $L$-functions to the more general case of finite slope
families of modular forms. Roughly, the idea is to $p$-adically interpolate
the sheaves ${\cal H}_{k}$ where now $k$ is a $p$-adic weight,
that is a continuous character of $\Z_{p}\inv$. More precisely they
introduce special open subsets ${\cal W}_{I}$ of the adic weight
space and, using the theory of the canonical subgroup, define formal
models $\X_{r,I}$ of strict open neighbourhoods ${\cal X}_{r,I}$
of the ordinary locus in $X_{\Q_{p}}^{{\rm an}}\times{\cal W}_{I}$
and finite coverings $\IG_{n,r,I}\to\X_{r,I}$ over which the dual
of the canonical subgroup of level $n$ has a canonical generator.
Using these data they construct a subsheaf $H^{\#}$ of $H_{{\rm dR}}^{1}\left(\frak{E}/\IG_{n,r,I}\right)$
and a marked section (in the sense of \cite[Section 2]{AI}). The
machinery of formal vector bundles with marked sections provides a
sheaf $\mathbb{W}_{\kappa}$ in Banach modules over ${\cal X}_{r,I}$,
together with a natural filtration $F^{\b}\mathbb{W}_{\kappa}$ in
locally free coherent modules with the property that, for $k\in\Z$
(which means $k\left(x\right)=x^{k}$) we have an equality on the
rigid analytic space
\[
F^{k}\mathbb{W}_{k}={\cal H}_{k}
\]
and an integrable connection $\nabla$ induced by the Gauss-Manin
connection on $H_{{\rm dR}}^{1}$. The interest in this construction
is that $\nabla$ can be $p$-adically interpolated: working with
$q$-expansions, if $s$ is a continuous character of $\Z_{p}\inv$
over a complete $\Z_{p}$-algebra $R$, define $d^{s}:R\left[\left[q\right]\right]^{U=0}\to R\left[\left[q\right]\right]^{U=0}$
as
\[
d^{s}\left(\sum_{\begin{array}{c}
n\ge1\\
p\nmid n
\end{array}}a_{n}q^{n}\right)=\sum_{\begin{array}{c}
n\ge1\\
p\nmid n
\end{array}}s\left(n\right)a_{n}q^{n}.
\]
Then, given another continuous character $\kappa$ of $\Z_{p}\inv$,
under some mild assumptions on $\kappa$ and $s$, for every $\omega$
local section of $\mathbb{W}_{\kappa}$ they define
\[
\nabla_{\kappa}^{s}\left(\omega\right)\in\mathbb{W}_{\kappa+2s}
\]
such that on $q$-expansions
\begin{equation}
\nabla_{\kappa}^{s}\left(\omega\right)\left(q\right)=d^{s}\left(\omega\left(q\right)\right).\label{eq:iteration}
\end{equation}
This turns out to be the crucial point in the construction of the
triple product $p$-adic $L$-functions for forms of finite slope. 

\subsubsection*{Hilbert modular forms}

\emph{Hilbert modular forms} can be seen as a higher dimensional generalisation
of (elliptic) modular forms, where the role of ${\rm GL}_{2}\left(\Q\right)$
is played by ${\rm GL}_{2}\left(L\right)$ for a totally real number
field $L$. From the geometric point of view this means that we need
to consider the so-called \emph{abelian varieties with real multiplication
by $\O_{L}$} instead of ellitic curves. In \cite{TiXi} the authors
extended the work of Coleman in the case of Hilbert modular forms,
again with a classicity result in mind, and the purpose of this thesis
is to build up the technical apparatus needed to extend the work of
Andreatta-Iovita to the case of Hilbert modular forms. More precisely
fix a totally real number field $L$, say of degree $g$ over $\Q$,
and let $N\ge4$ and $p$ be two coprime natural numbers with $p$
prime. Finally let $\X$ be the ($p$-adic formal scheme associated
with a) smooth toroidal compactification of the moduli space of abelian
schemes with real multiplication by $\O_{L}$ and $\mu_{N}$-level
structure. The advantage of working with toroidal compactifications
lies in the fact that we have an universal semi-abelian object $\pi:\A\to\X$
with an action of $\O_{L}$. Weights, in this setting, are locally
analytic characters of the group $\left(\Z_{p}\ot_{\Z}\O_{L}\right)\inv$
(we can actually consider all of them at once taking a universal character
$\kappa$). 

First we consider a slight variation of the construction of formal
vector bundles with sections given in \cite{AI} to keep track of
the action of $\O_{L}$: pick a Galois closure $L^{{\rm Gal}}$ of
$L$ and let $d_{L}\in\Z$ be the discriminant of $L$, the main idea
in this construction is that, if $R$ is an $\O_{L^{{\rm Gal}}}\left[d_{L}^{-1}\right]$-algebra,
then we have a ring isomorphism
\[
\O_{L}\ot_{\Z}R\to\prod_{\sigma\in\frak{G}}R_{\sigma},
\]
given by $x\ot1\mapsto\left(\sigma\left(x\right)\right)_{\sigma}$,
where $\frak{G}$ is the set of embeddings $L\to L^{{\rm Gal}}$ and
$R_{\sigma}=R$. Therefore for every $\O_{L}\ot_{\Z}R$-module $M$
we have a canonical decomposition as $R$-modules 
\[
M=\prod_{\sigma\in\frak{G}}M\left(\sigma\right)
\]
and we can construct formal vector bundles with $\O_{L}$-action by
working for each $\sigma$ separatedly. 

Using the theory of canonical subgroups developped in \cite{AIP2}
we define formal models $\X_{r}$ of the overconvergent rigid analytic
neighborhoods and finite coverings $\IG_{n,r,I}\to\X_{r,I}$ parametrising
bases of the dual of the $n$-th canonical subgroup. Over these schemes
we have locally free coherent $\O_{\IG_{n,r,I}}\ot_{\Z}\O_{L}$-modules
$\Omega_{\A}\su H^{\#}$ with a common marked section, that allow
us, using the machinery explained above, to define sheaves of $\O_{\X_{r}}\ot_{\Z}\O_{L}$-modules
$\mathbb{W}_{\kappa}$ and $\mathfrak{w}^{\kappa}$ such that (see
Theorem \ref{thm:filtrazionesuX} for a precise statement) 
\begin{thm*}
$\,$
\begin{enumerate}
\item The sheaf $\mathbb{W}_{\kappa}$ comes with a natual increasing filtration
$F^{\b}\mathbb{W}_{\kappa}$ by locally free coherent $\O_{\X_{r}}\ot_{\Z}\O_{L}$-modules 
\item $\mathbb{W}_{\kappa}$ is isomorphic to the completed limit $\widehat{\varinjlim}F^{h}\mathbb{W}_{\kappa}$
and the graded pieces are
\[
{\rm Gr}^{h}F^{\b}\mathbb{W}_{\kappa}\simeq\frak{w}^{\kappa}\ot_{\O_{\X}}\uo_{\A}^{-2h};
\]
where $\uo_{\A}$ is pullback of the universal object $\A\to\X_{r}$
along the zero section;
\item $F^{0}\mathbb{W}_{\kappa}\simeq\frak{w}^{\kappa}$ and its sections
over the rigid analytic fibre ${\cal X}_{r}$ of $\X_{r}$ are the
overconvergent Hilbert modular forms as in \cite{TiXi}.
\end{enumerate}
\end{thm*}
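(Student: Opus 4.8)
The plan is to leverage the machinery of formal vector bundles with marked sections, applied $\sigma$-component by $\sigma$-component via the canonical decomposition $M=\prod_{\sigma\in\gS}M(\sigma)$ coming from the splitting $\O_{L}\ot_{\Z}R\to\prod_{\sigma}R_{\sigma}$ over the base $\O_{L^{\mathrm{Gal}}}[d_{L}^{-1}]$-algebra. Since $\mathbb{W}_{\kappa}$ is built from the pair $\Omega_{\A}\su H^{\#}$ of locally free coherent $\O_{\IG_{n,r,I}}\ot_{\Z}\O_{L}$-modules together with their common marked section, I expect the construction to produce, in each $\sigma$-eigenspace, a filtered object of the shape studied in \cite[Section 2]{AI}; the three assertions should then follow by assembling the $\sigma$-components and descending from $\IG_{n,r,I}$ back down to $\X_{r}$ using the fact that the marked section is Galois-equivariant.

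For item (1), first I would recall that the formal-vector-bundle-with-marked-section functor equips $\mathbb{W}_{\kappa}$ with its tautological increasing filtration $F^{\b}$, whose terms are locally free and coherent because this is exactly what the abstract construction guarantees at each finite level $h$; keeping track of the $\O_{L}$-action amounts to checking that the filtration respects the $\sigma$-decomposition, which it does since everything is built functorially from the $\O_{L}\ot\O_{\X_{r}}$-module structure. For item (2), the identification $\mathbb{W}_{\kappa}\simeq\widehat{\varinjlim}F^{h}\mathbb{W}_{\kappa}$ is the completeness statement for the Banach-module construction, and the computation of the graded pieces reduces to the abstract formula from the marked-sections formalism: the graded quotient of the $h$-th step is a twist of $\gr^{0}$ by the $(-2h)$-th symmetric-type power of the line bundle $\uo_{\A}$ generated by the marked section. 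Here I would carry out the twist bookkeeping carefully, matching the weight $-2h$ against the normalization of the connection and of $\kappa$, and verifying $\gr^{0}\simeq\frak{w}^{\kappa}$ so that $\gr^{h}\simeq\frak{w}^{\kappa}\ot_{\O_{\X}}\uo_{\A}^{-2h}$.

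For item (3) the isomorphism $F^{0}\mathbb{W}_{\kappa}\simeq\frak{w}^{\kappa}$ is the $h=0$ instance of the graded computation combined with the fact that $F^{0}$ is the bottom step, so it is the image of the marked section's span under the weight-$\kappa$ construction; this is essentially the definition of $\frak{w}^{\kappa}$. The genuinely substantive part is identifying the sections of $F^{0}\mathbb{W}_{\kappa}$ over the rigid fibre ${\cal X}_{r}$ with the overconvergent Hilbert modular forms of \cite{TiXi}. To do this I would compare the two definitions of $\frak{w}^{\kappa}$: ours, arising as $F^{0}$ of the interpolation sheaf built from $\Omega_{\A}$ and the marked section on $\IG_{n,r,I}$, against the one in \cite{TiXi} built directly from the sheaf of invariant differentials $\go_{\A}$ on the overconvergent locus. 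The point is that both are obtained by applying the locally analytic character $\kappa$ of $(\Z_{p}\ot_{\Z}\O_{L})\inv$ to a canonical trivialisation of $\go_{\A}$ furnished by the canonical subgroup and its marked generator, so the two weight-$\kappa$ sheaves agree after passing to the rigid analytic fibre and descending the $\IG$-level structure.

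The main obstacle I anticipate is precisely this last comparison in item (3): reconciling the normalisations. One must check that the marked section used here induces the same trivialisation (up to the expected twist) as the dlog of the canonical generator of the dual canonical subgroup used in \cite{TiXi}, and that the $\sigma$-indexed weight conventions for $\kappa\in\Hom((\Z_{p}\ot\O_{L})\inv,\cdot)$ line up on both sides, including the factor $\uo_{\A}^{-2h}$ matching the integral weights $k$ via $k(x)=x^{k}$ componentwise. The filtration claims in (1) and (2) I expect to be essentially formal consequences of the marked-sections machinery applied componentwise; it is the dictionary between the abstract $F^{0}$ and the concrete overconvergent Hilbert modular forms, together with the Galois descent from $\IG_{n,r,I}$ to $\X_{r}$, that will require the real work.
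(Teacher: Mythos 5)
Your outline does track the paper's strategy at a high level: the paper also works component by component, gets the filtration and its formal properties over the Igusa tower from the marked-sections machinery (its Proposition \ref{prop:strutturaWtilde}), obtains $F^{0}\mathbb{W}_{\kappa}\simeq\frak{w}^{\kappa}$ essentially by definition, and identifies $\frak{w}^{\kappa}$ with the overconvergent forms of \cite{TiXi} by comparing with the torsor $\frak{F}_{n}$ of \cite{AIP1} via the marked section $\dlog_{H_{n}}\left(P_{n}\right)$ (Remark \ref{rem:Isomorfismotorsore}), exactly in the spirit you describe. The genuine gap is in the step you dismiss as formal: the descent from $\IG_{n,r,I}$ to $\X_{r}$. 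This is \emph{not} Galois descent, because $\frak{h}^{n}:\IG_{n,r,I}\to\X_{r,I}$ is finite but only étale \emph{after inverting} $\Hdg$; it is ramified over the non-ordinary locus. Consequently, taking $\kappa$-eigenspaces of $\frak{f}_{\ast}^{\left(i\right)}\O_{\V^{\left(i\right)}}$ does not automatically produce locally free $\O_{\X_{r}}$-modules, and item (1) — local freeness of $F^{h}\mathbb{W}_{\kappa}$ over the base, on which (2) and (3) rest — is precisely where the real work lies. The paper handles this in Note \ref{note:provache=00005Cfrakwinvertibile}: using the trace-compatible elements $c_{n}\in\tilde{\Ha}^{-\left(p^{n}-p\right)/\left(p-1\right)}R_{n}^{\left(i\right)}$ with ${\rm Tr}\left(c_{n}\right)=c_{n-1}$ from Remark \ref{rem:ramificazioneIgusa}, the analyticity of the universal character (Proposition \ref{prop:analiticitadecarattereuniversale}), and the divisibility $\tilde{\Ha}^{p^{r+k+1}}\mid p$, it builds an explicit element
\[
b_{\s,u}=\sum_{\gamma\in\left(\O_{F_{i}}/p^{n}\right)\inv}\kappa^{\left(i\right)}\left(\tilde{\gamma}\right)\gamma\ast\left(\kappa^{\left(i\right)}\left(\s\left(u\right)+\gb_{n}Z_{\s,u}\right)c_{n}\right)
\]
which is congruent to a unit modulo topologically nilpotent elements, and then invokes topological Nakayama to conclude that $\frak{w}^{\kappa^{\left(i\right)}}$ is free with basis $\bigotimes_{\s}b_{\s,u}$. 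Nothing in your proposal substitutes for this averaging-over-the-ramified-cover argument, so as written your proof of (1) would fail off the ordinary locus.

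A second, smaller omission: the graded piece naturally comes out as $\frak{w}^{\kappa}\ot_{\O_{\X}}\uo_{\A}^{-h}\ot_{\O_{\X}}\uo_{\A^{D}}^{-h}$, with $\uo_{\A^{D}}$ the conormal sheaf of the \emph{dual} semi-abelian scheme, because the quotient in the exact sequence $\left({\cal H}_{\A}^{\#}\right)^{\b}$ is $\ud^{p}\cdot\uo_{\A^{D}}^{\lor}$. Rewriting this as $\uo_{\A}^{-2h}$, as in the statement, is not weight-normalization bookkeeping: it requires the identification $\uo_{\A^{D}}\simeq\uo_{\A}$ coming from the prime-to-$p$ polarisation on the $p$-divisible group (the paper's digression on polarisations and Note \ref{note:autodualit=0000E0}). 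You should either prove that identification or state the graded pieces in the polarisation-free form.
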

Since the sheaf $\mathbb{W}_{\kappa}$ is constructed by means of
the first de Rham cohomology of the morphism $\pi$, the theory of
formal vector bundles with marked sections provides an integrable
connection $\nabla_{\kappa}$ over $\IG_{n,r,I}$, descending to a
meromorphic integrable connection on $\X_{r}$. However, on the analytic
space ${\cal X}_{r}$ we have (see Proposition \ref{prop:esisteconnessionesuW}
for a precise statement)
\begin{thm*}
On ${\cal X}_{r}$ the induced map on graded pieces
\[
\gr^{h}F^{\b}\nabla_{\kappa}:{\rm Gr}^{h}F^{\b}\mathbb{W}_{\kappa}\to{\rm Gr}^{h+1}F^{\b}\mathbb{W}_{\kappa}\widehat{\ot}_{\O_{{\cal X}_{r,I}}}\Omega_{{\cal X}_{r,I}}^{1}
\]
is well-defined and it is the composition of an isomorphism and the
product by an explicit element depending only on $\kappa$ and $h$.
\end{thm*}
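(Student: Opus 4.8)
The plan is to analyze the induced map on graded pieces by leveraging the explicit description of the graded pieces given in part (2) of the previous theorem. First I would recall that $\gr^h F^\b \mathbb{W}_\kappa \simeq \frak{w}^\kappa \ot_{\O_\X} \uo_\A^{-2h}$, so the map $\gr^h F^\b \nabla_\kappa$ becomes a map
\[
\frak{w}^\kappa \ot_{\O_\X} \uo_\A^{-2h} \to \frak{w}^\kappa \ot_{\O_\X} \uo_\A^{-2(h+1)} \widehat{\ot}_{\O_{{\cal X}_{r,I}}} \Omega^1_{{\cal X}_{r,I}},
\]
and the goal is to identify this as an isomorphism followed by multiplication by an explicit scalar. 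The key structural input is that $\nabla_\kappa$ is induced by the Gauss--Manin connection $\nabla$ on $H^1_{\dR}(\A/\X_r)$, which by Griffiths transversality shifts the Hodge filtration by one step; this is precisely why the connection descends to a \emph{degree $+1$} map on the associated graded. So the heart of the matter is to compute what the Gauss--Manin connection does on the graded level and to see how the weight character $\kappa$ enters.

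The main computation I would carry out is the following. The construction of $\mathbb{W}_\kappa$ via formal vector bundles with marked sections expresses local sections as (limits of) functions on the torsor parametrising trivialisations of $H^\#$ compatible with the marked section $\Omega_\A$. On such functions the connection $\nabla_\kappa$ acts by the Gauss--Manin connection composed with the weight action, and the filtration $F^\b$ corresponds to the polynomial degree in the non-marked direction. Passing to $\gr^h$ kills everything except the top-degree behaviour, so the induced map is controlled by a single application of $\nabla$ to the marked section together with the derivative of $\kappa$. Concretely, writing the marked section and its image under Kodaira--Spencer, the map on graded pieces should factor through the Kodaira--Spencer isomorphism $\uo_\A^{\ot 2} \simeq \Omega^1_{{\cal X}_{r,I}}$ (appropriately decomposed over $\s\in\frak{G}$ because of the $\O_L$-action), which accounts for the isomorphism part, while the scalar is read off from the derivative of the character, i.e. an expression built from $\kappa$ and the integer $h$.

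The steps in order would be: (i) reduce to the torsor-theoretic model for $\mathbb{W}_\kappa$ and identify $F^h$ with the degree-$\le h$ part, so that $\gr^h$ is the degree-exactly-$h$ part; (ii) compute $\nabla_\kappa$ explicitly on a monomial of degree $h$, using that $\nabla$ sends the marked section $\omega$ to a section whose non-marked component generates $\gr^1$ via Kodaira--Spencer; (iii) separate the two contributions arising from the Leibniz rule, namely the part acting on the degree-$h$ monomial (raising degree by one, giving the isomorphism) and the part acting through the weight $\kappa$ (which on the nose contributes the explicit scalar); and (iv) verify that the result is insensitive to the lift, i.e. well-defined modulo the lower filtration steps, which is exactly the content of Griffiths transversality and is why we must restrict to the analytic fibre ${\cal X}_r$ where the meromorphic poles of the connection are controlled.

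The hard part will be step (iii) together with keeping careful track of the $\O_L$-equivariant decomposition into the factors indexed by $\s\in\frak{G}$: because the weight $\kappa$ is a character of $(\Z_p \ot_\Z \O_L)\inv$, its ``derivative'' is genuinely a collection of scalars, one per embedding, and I expect the explicit element multiplying the isomorphism to be a sum over $\s$ of terms involving the component weights $\kappa_\s$ and $h$, rather than a single scalar. Showing that these componentwise contributions assemble correctly, and that the isomorphism part is indeed an isomorphism (and not merely injective) — which relies on Kodaira--Spencer being an isomorphism and on $\uo_\A$ being an invertible $\O_\X \ot_\Z \O_L$-module after the étale trivialisation — is where the technical care is concentrated.
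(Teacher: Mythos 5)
Your proposal follows essentially the same route as the paper's proof of Proposition \ref{prop:esisteconnessionesuW}: a local computation on the torsor model in which the filtration is the degree filtration, the Kodaira--Spencer isomorphism supplies the isomorphism part, and the element $u_{I}^{\left(i\right)}$ with $\kappa_{I}\left(t\right)=\exp\left(u_{I}\log t\right)$ encodes the character's contribution, everything being carried out separately for each embedding $\s$ and each factor $\O_{F_{i}}$, with the poles in $\Hdg$ and $\ga$ absorbed on the analytic fibre. The one imprecision is in your step (iii): both Leibniz contributions factor through the \emph{same} Kodaira--Spencer raising map (the monomial part contributing $-h$ and the character part contributing $u_{I}^{\left(i\right)}$), so the result is an isomorphism composed with componentwise multiplication by $u_{I}^{\left(i\right)}-h$, rather than a sum over $\s$ of separate terms as you predict.
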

Related results are obtained independently in \cite{Ayc}.

\subsection*{Future perspectives}

As said above, the goal of this thesis is to provide the technical
tools necessary to extend the work of Andreatta-Iovita \cite{AI}
to the case of Hilbert modular forms, hence let me briefly explain
two important applications.

\subsubsection*{De Rham cohomology and cusp forms}

Using the results in Section \ref{subsec:The-Gauss-Manin-connection-1}
it is possible to construct the de Rham complex
\[
\mathbb{W}_{\kappa}^{\b}:0\to\mathbb{W}_{\kappa}\to\mathbb{W}_{\kappa}\widehat{\ot}_{\O_{{\cal X}_{r}}}\Omega_{{\cal X}_{r}}^{1}\to\dots\to\mathbb{W}_{\kappa}\widehat{\ot}_{\O_{{\cal X}_{r}}}\Omega_{{\cal X}_{r}}^{g}\to0.
\]
\begin{itemize}
\item In the elliptic case \cite[Corollary 3.35, pag. 42]{AI} it is shown,
using sheaf cohomology arguments, that over an open subset ${\cal U}$
of the weight space given by removing a finite number of \emph{classical}
points there is an isomorphism
\[
H_{{\rm dR}}^{1}\left({\cal X}_{r},\mathbb{W}_{k}^{\b}\right)^{\left(h\right)}\ot\O\left({\cal U}\right)\simeq H^{0}\left({\cal X}_{r},\frak{w}^{k+2}\right)^{\left(h\right)}\ot\O\left({\cal U}\right)
\]
where ${\cal U}$ depends on $h$ and the slope decomposition is taken
with respect to the action of $U_{p}$. 
\item In the Hilbert case, in \cite[Theorem 3.5, pag. 95]{TiXi}it shown
that there is a sujective map 
\[
S_{{\bf k}}^{\dagger}\to H_{{\rm rig}}^{g}\left({\cal X}_{r}^{{\rm ord}},D,{\cal F}_{{\bf k}}\right)
\]
whose kernel can be described by means of a certain differential operator
$\Theta$, where ${\bf k}$ is a cohomological classical multiweight,
$S^{\dagger}$ denotes the module of overconvergent cusp forms, $D$
is the boundary divisor for a fixed toroidal compactification and
${\cal F}_{{\bf k}}$ is a sheaf similat to our $\mathbb{W}_{{\bf k}}$.
\end{itemize}
These two results suggest that it will be possible to relate the de
Rham cohomology $H_{{\rm dR}}^{g}\left({\cal X}_{r},\mathbb{W}_{k}\right)$,
or some finite slope cusp submodule thereof, with the space of Hilbert
cusp forms. The initial step for this task will be, following \cite{TiXi},
the study of the dual BGG complex of $\mathbb{W}_{\kappa}$ in the
spirit of \cite[Section 2.15, pag. 90]{TiXi}.

\subsubsection*{Iteration of the Gauss-Manin connection and construction of triple
product $p$-adic $L$-functions for finite slope Hilbert modular
forms for unramified primes}

The main result in \cite{AI}, as explained above, is the construction
of triple product $p$-adic $L$-functions in the more general case
of finite slopes elliptic modular forms instead of ordinary ones,
and the extension of this result to the case of Hilbert modular forms
is the natural application of the work carried out in this thesis.
The main technical obstacle is the construction of iterations of the
Gauss-Manin connection introduced in Section \ref{subsec:The-Gauss-Manin-connection-1},
that is of sections as in (\ref{eq:iteration}) under suitable conditions
on the weights $s$ and $k$. 

This work will require the definition and the study of the notion
of \emph{nearly overconvergent Hilbert modular forms} rely either
on computations using $q$-expansions (\cite{AI}) or in terms of
Serre-Tate coordinates (\cite{Fan} or a recent unpublished work of
S. Molina).

\section{Preliminaries}
\begin{notation*}
Let $L$ be a totally real number field of degree $g$ over $\Q$
and fix a prime $p$ which is unramified in $L$. Let\foreignlanguage{english}{
\[
p\O_{L}=\p_{1}\dots\p_{d}
\]
and}
\begin{equation}
\O_{L}\ot_{\Z}\Z_{p}=\prod\O_{F_{i}},\label{eq:decomposizioneocmpletamentoO_L}
\end{equation}
then $F_{i}$ is an unramified extension of $\Q_{p}$. 
\end{notation*}

\subsection{The weight space}

This chapter is essentially borrowed from \cite[Section 2]{AIP1}.

Let $\T={\rm Res}_{\O_{L}/\Z}\G_{m}$ and denote with $\gL=\gL_{L}$
the completed group algebra
\[
\gL=\Z_{p}\left[\left[\T\left(\Z_{p}\right)\right]\right],
\]
with $\k^{{\rm u}}:\T\left(\Z_{p}\right)\to\gL\inv$ the natural inclusion,
which we will refer to as the universal character and 
\[
\gL^{0}=\Z_{p}\left[\left[\T\left(\Z_{p}\right)_{{\rm tf}}\right]\right]
\]
be the completed group algebra corresponding to the torsion-free quotient
$\T\left(\Z_{p}\right)_{{\rm tf}}$, so that we can consider the composition
\[
\k:\T\left(\Z_{p}\right)\overset{\k^{{\rm u}}}{\longrightarrow}\gL\inv\to\left(\gL^{0}\right)\inv.
\]
\begin{lem}
The ring $\gL^{0}$ is a regular local ring of Krull dimension $g+1$.
Let $\gg_{1},\dots,\gg_{g}$ be any topological basis of $\T\left(\Z_{p}\right)_{{\rm tf}}$
and let $\m\su\gL^{0}$ be the ideal generated by $p,\gg_{1}-1,\dots,\gg_{g}-1$,
then $\m$ is the maximal ideal of $\gL^{0}$ and $\gL^{0}$ is complete
with respect to the $\m$-adic topology.
\end{lem}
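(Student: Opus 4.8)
The plan is to reduce the whole statement to the standard structure theory of the Iwasawa algebra $\Zp[[\Zp^{g}]]$: the only genuinely arithmetic input is the identification of $\T(\Zp)_{{\rm tf}}$ with $\Zp^{g}$, and everything else is the commutative algebra of formal power series rings over $\Zp$.

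First I would compute the torsion-free quotient. Since $\T=\Res_{\O_{L}/\Z}\G_{m}$ we have $\T(\Zp)=\left(\O_{L}\ot_{\Z}\Zp\right)\inv$, and the decomposition \eqref{eq:decomposizioneocmpletamentoO_L} gives $\T(\Zp)=\prod_{i}\O_{F_{i}}\inv$. For each $F_{i}$, unramified over $\Qp$ of residue degree $f_{i}=[F_{i}:\Qp]$, the unit group splits as the product of its finite torsion subgroup and the principal units $1+p\O_{F_{i}}$; because $p$ is unramified, the maximal ideal is $p\O_{F_{i}}$ and the $p$-adic logarithm identifies an open subgroup of the principal units with the additive group $\O_{F_{i}}\cong\Zp^{f_{i}}$, so $\O_{F_{i}}\inv\cong(\text{finite})\times\Zp^{f_{i}}$. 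Passing to the maximal torsion-free quotient kills the finite parts, and since $\sum_{i}f_{i}=[L:\Q]=g$ I obtain $\T(\Zp)_{{\rm tf}}\cong\Zp^{g}$. This is the one place where the unramifiedness of $p$ is used.

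Next I would invoke the standard Iwasawa isomorphism. Choosing any topological basis $\gg_{1},\dots,\gg_{g}$ of $\T(\Zp)_{{\rm tf}}\cong\Zp^{g}$, the continuous $\Zp$-algebra homomorphism sending $\gg_{j}\mapsto1+X_{j}$ extends to a topological isomorphism
\[
\gL^{0}=\Zp\left[\left[\T(\Zp)_{{\rm tf}}\right]\right]\xrightarrow{\ \sim\ }\Zp[[X_{1},\dots,X_{g}]],
\]
under which the ideal $\left(p,\gg_{1}-1,\dots,\gg_{g}-1\right)$ corresponds to $\left(p,X_{1},\dots,X_{g}\right)$. Any two topological bases differ by an automorphism of $\Zp^{g}$, so this identification, and in particular the correspondence of ideals, is independent of the chosen basis.

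Finally the three assertions follow from well-known facts about $A[[X_{1},\dots,X_{g}]]$ with $A=\Zp$. The ring $\Zp$ is a regular local ring of Krull dimension $1$, and adjoining $g$ power-series variables over a Noetherian regular local ring again yields a regular local ring whose dimension grows by exactly $g$; hence $\gL^{0}$ is regular local of dimension $1+g=g+1$. Its unique maximal ideal is $\left(p,X_{1},\dots,X_{g}\right)$, i.e.\ the ideal $\m$ of the statement, and completeness with respect to the $\m$-adic topology is built into the formal power series ring once one checks that the $\m$-adic topology agrees with the natural topology on $\Zp[[X_{1},\dots,X_{g}]]$ (equivalently that $\m^{n}\to0$). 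I do not expect a serious obstacle here: the substantive steps are the rank computation of the previous paragraph and the power-series identification, after which the regularity, dimension, maximality and completeness claims are routine.
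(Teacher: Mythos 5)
Your proposal is correct, but there is nothing in the paper to compare it against: the paper states this lemma without proof, as part of a section that is explicitly ``borrowed from [AIP1, Section 2]'', so the burden falls entirely on whether your argument stands on its own. It does. The three steps --- (i) $\T\left(\Z_{p}\right)=\prod_{i}\O_{F_{i}}\inv$ with $\O_{F_{i}}\inv\simeq(\text{finite torsion})\times\Z_{p}^{f_{i}}$, hence $\T\left(\Z_{p}\right)_{{\rm tf}}\simeq\Z_{p}^{g}$; (ii) the Iwasawa-type isomorphism $\Z_{p}\left[\left[\Z_{p}^{g}\right]\right]\simeq\Z_{p}\left[\left[X_{1},\dots,X_{g}\right]\right]$, $\gg_{j}\mapsto1+X_{j}$, carrying $\left(p,\gg_{1}-1,\dots,\gg_{g}-1\right)$ to $\left(p,X_{1},\dots,X_{g}\right)$; (iii) the standard facts that $\Z_{p}\left[\left[X_{1},\dots,X_{g}\right]\right]$ is regular local of dimension $g+1$, with that maximal ideal, and complete for its maximal-adic topology --- constitute exactly the standard argument, and it is the one the cited reference relies on.

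Two small inaccuracies are worth flagging, neither fatal. First, your claim that the rank computation ``is the one place where the unramifiedness of $p$ is used'' overstates the role of unramifiedness: for \emph{any} finite extension $K/\Q_{p}$ one has $\O_{K}\inv\simeq\mu(K)\times\Z_{p}^{[K:\Q_{p}]}$, so $\T\left(\Z_{p}\right)_{{\rm tf}}\simeq\Z_{p}^{g}$ holds in the ramified case as well; unramifiedness merely lets you write the maximal ideal as $p\O_{F_{i}}$ and is needed elsewhere in the paper, not here. Second, for $p=2$ the principal units $1+p\O_{F_{i}}$ are \emph{not} torsion-free (e.g.\ $-1\in1+2\Z_{2}$), so the splitting into ``finite torsion times $\Z_{p}^{f_{i}}$'' cannot be read off from the logarithm on all principal units; your phrasing via an \emph{open} subgroup where $\log$ converges and is injective, together with the structure theorem for topologically finitely generated $\Z_{p}$-modules, is the right fix, but you should make that deduction explicit rather than leave it as an aside.
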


Let $\frak{W}=\spf\left(\gL\right)$ and $\frak{W}^{0}=\spf\left(\gL^{0}\right)$.
We consider the admissible (formal) blow-up $t:{\rm Bl}_{\m}\frak{W}\to\frak{W}$
along the ideal $\m$. In the same way we define $t^{0}:{\rm Bl}_{\m}\frak{W}^{0}\to\frak{W}^{0}$.
We have a finite locally free natural map ${\rm Bl}_{\m}\frak{W}\to{\rm Bl}_{\m}\frak{W}^{0}$.

We will work with the adic spaces associated with such formal schemes. 
\begin{note}
\label{Note:punti analitici}Given an affinoid adic space $X=\spa\left(A,A^{+}\right)$,
a point $x\in X$ is called analytic if ${\rm Supp}\left(x\right)\in\spec\left(A\right)$
is not open. If $X$ is not affine, then $x\in X$ is called analytic
if there exists an open neighborhood $x\in U$ such that $\O_{X}\left(U\right)$
contains a topologically nilpotent unit. We will denote with $X^{{\rm an}}\su X$
the open subspace of analytic points. Note that if $\X=\spf\left(A\right)$
is a noetherian affine formal scheme where $A$ complete with respect
to the topology induced by an ideal $\aa$. Let and let $t:{\rm Bl}_{\aa}\X\to\X$
be the admissible formal blow-up along $\aa$. Then $t$ is an adic
morphism and the corresponding map
\[
t^{{\rm an}}:\left({\rm Bl}_{\aa}\X\right)^{{\rm an}}\to\X\an
\]
on the analytic locus is an open embedding. We ay that an adic space
$X$ is analytic if $X=X^{{\rm an}}$
\end{note}

\begin{defn}
Define ${\cal W}^{0}=\left(\frak{W}^{0}\right)\an$ and for every
$\ga\in\m$ let $\W_{\ga}^{0}=\left(\frak{W}_{\ga}^{0}\right)\an$.

Let $\frac{r}{s}\in\Q_{>0}$ be a reduced fraction, define the following
subsets of ${\cal W}^{0}$:
\begin{itemize}
\item $\W_{\le\frac{r}{s}}^{0}=\left\{ x\in\W^{0}\,\mid\,\left|\ga^{r}\right|_{x}\le\left|p^{s}\right|_{x}\neq0\quad\forall\ga\in\m\right\} $
\item $\W_{\ge\frac{r}{s}}^{0}=\left\{ x\in\W^{0}\,\mid\,0\neq\left|\ga^{r}\right|_{x}\ge\left|p^{s}\right|_{x}\quad\forall\ga\in\m\right\} $
\item $\W_{\ge0}^{0}=\W_{\le\infty}^{0}=\W^{0}$
\item for $a,b\in\Q_{>0}\cup\left\{ \infty\right\} $ and $I=\left[a,b\right]$
set $\W_{I}^{0}=\W_{\le b}^{0}\cap\W_{\ge a}^{0}$
\item for $\ga\in\m$ we let $\W_{\ga,I}^{0}=\W_{I}^{0}\cap\W_{\ga}^{0}$.
\end{itemize}
We introduce formal models for these spaces: fix an interval $I=\left[a,b\right]\su\Q_{>0}\cup\left\{ \infty\right\} $.
For $\ga\in\m$ let $\frak{\frak{W}}_{\ga}^{0}=\spf\left(B_{\ga}\right)$,
set $B_{\ga,I}^{0}=H^{0}\left(\W_{\ga,I}^{0},\O_{\W_{\ga,I}^{0}}^{+}\right)$
and $\frak{W}_{\ga,I}^{0}=\spf\left(B_{\ga,I}^{0}\right)$. The analytic
fibre of $\frak{W}_{\ga,I}^{0}$ is $\W_{\ga,I}^{0}$ and that they
give an affine cover of a locally noetherian formal scheme $\frak{W}_{I}^{0}$
whose analytic fibre is $\W_{I}^{0}$. We let
\[
\k_{I}:T\left(\Z_{p}\right)\to\left(\O_{\W_{I}}^{+}\right)\inv
\]
be the natural map.
\end{defn}

\begin{prop}[Analyticity of the universal character]
\label{prop:analiticitadecarattereuniversale}Let $n,m\ge0$ be integers
and let $I\su\left[0,p^{n}\right]\cap\Q$ be a closed interval. Set
\[
\epsilon=\begin{cases}
1 & p\neq2\\
3 & \mbox{otherwise}
\end{cases},
\]
then $\k$ induces a pairing
\[
\k_{{\rm p}}:\frak{W}_{I}^{0}\times\T\left(\Z_{p}\right)\cdot\left(1+p^{n+\epsilon}\cdot\Res_{\O_{L}/\Z}\left(\G_{a}\right)\right)\to\G_{m}
\]
on the category of $p$-adic formal schemes on $\frak{W}_{I}^{0}$
that restricts to
\[
\k_{{\rm p}}:\frak{W}_{I}^{0}\times\left(1+p^{n+m+\epsilon}\cdot\Res_{\O_{L}/\Z}\left(\G_{a}\right)\right)\to1+qp^{m}\G_{a}.
\]
\end{prop}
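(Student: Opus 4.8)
The plan is to construct $\kappa_p$ explicitly from the universal parameters of $\gL^0$ and to deduce both its well-definedness and the asserted congruences from a single valuation estimate, exploiting throughout that $p$ is unramified. By \eqref{eq:decomposizioneocmpletamentoO_L} we have $\O_L\ot_\Z\Zp=\prod_i\O_{F_i}$ with each $F_i/\Qp$ unramified, hence $\T(\Zp)=\prod_i\O_{F_i}\inv$ and $\T(\Zp)_{\mathrm{tf}}$ is free over $\Zp$ of rank $g$. Unramifiedness guarantees that the $p$-adic logarithm is an isometry between the pro-$p$ part $1+p^{\e}\,\O_L\ot_\Z\Zp$ and a lattice of valuation $\ge1$ in $\O_L\ot_\Z\Qp$, with inverse $\exp$; this is precisely the purpose of the shift $\e$, which for $p=2$ must be enlarged to $3$ in order to kill the $2$-power torsion of $\O_{F_i}\inv$ and to restore the convergence of $\log$ and $\exp$. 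I would fix a topological basis $\gamma_1,\dots,\gamma_g$ of $\T(\Zp)_{\mathrm{tf}}$ as in the preceding lemma and set $T_j=[\gamma_j]-1\in\gL^0$, so that $\kappa_I(\gamma_j)=1+T_j$.

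I would then define the pairing functorially. For a $p$-adic formal scheme $S\to\frak{W}_I^0$ and a section $t$ of $1+p^{n+\e}\Res_{\O_L/\Z}(\G_a)$ over $S$, the logarithm converges and I may write $\log t=\sum_j s_j(t)\log\gamma_j$ with analytic coordinates $s_j(t)\in\O_S(S)$; additivity of $\log$ makes $t\mapsto(s_j(t))_j$ a homomorphism. Writing $T_j$ also for its image in $\O_S(S)$ via $S\to\frak{W}_I^0$, I set
\[
\kappa_p(t)=\prod_j(1+T_j)^{s_j(t)}=\prod_j\sum_{k\ge0}\binom{s_j(t)}{k}\,T_j^{\,k},
\]
and extend by $\kappa_p(\zeta t)=\kappa_I(\zeta)\,\kappa_p(t)$ for $\zeta\in\T(\Zp)$. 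Functoriality in $S$ is immediate, multiplicativity in $t$ follows from additivity of the $s_j$, and the two prescriptions agree on the overlap $\T(\Zp)\cap\bigl(1+p^{n+\e}\Res_{\O_L/\Z}(\G_a)\bigr)$, where $s_j(t)\in\Zp$ and the binomial expansion recovers $\kappa_I\bigl(\prod_j\gamma_j^{s_j(t)}\bigr)$.

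The heart of the argument is a single valuation estimate, from which convergence and both congruences follow. Normalising $v(p)=1$, the defining inequalities of $\W_I^0$ with $I\su[0,p^n]$ force $v(T_j)\ge 1/p^n$ for every $j$, while $t\equiv1\pmod{p^{n+\e}}$ forces $v(s_j(t))\ge n+\e-1$ because the $\log\gamma_j$ span a lattice of valuation $\ge1$. Using $v\bigl(\binom{s}{k}\bigr)\ge v(s)-v(k)$ one obtains for all $k\ge1$
\[
v\!\left(\binom{s_j(t)}{k}\,T_j^{\,k}\right)\ge(n+\e-1)-v(k)+\frac{k}{p^n}\ge\e,
\]
the minimum being attained near $k=p^n$; this shows each factor converges to a $1$-unit of $\O_S$, so $\kappa_p$ takes values in $\G_m$. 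For the refined statement one replaces $n+\e$ by $n+m+\e$, whence $v(s_j(t))\ge n+m+\e-1$ and the same computation yields term valuations $\ge m+\e\ge m+v(q)$, so that $\kappa_p(t)\in1+qp^m\G_a$.

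I expect the main obstacle to be exactly this valuation bookkeeping: the lower bound $v(T_j)\ge p^{-n}$ coming from $I\su[0,p^n]$ and the depth $n+\e$ (respectively $n+m+\e$) of the thickening are in tight balance, so the constants must be tracked with care, and the case $p=2$—where $\e=3$ and $q=4$—requires separate control of the $2$-power torsion in $\O_{F_i}\inv$ and of the narrower convergence domains of $\log$ and $\exp$. All of these are the unramified Hilbert analogues of the estimates in \cite[Section 2]{AIP1}, and it is precisely the unramifiedness of $p$ that lets me reduce every norm computation to the components $F_i$ and argue there as in the $\mathrm{GL}_2$ case.
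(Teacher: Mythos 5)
The paper never proves this proposition: it is stated without proof in a section ``essentially borrowed'' from \cite[Section 2]{AIP1}, and the paper's own Note \ref{note:esisteelementou_I} records the $\exp$--$\log$ mechanism ($\kappa_{I}(t)=\exp(u_{I}\log t)$) that the cited argument runs on. Your skeleton --- logarithmic coordinates $s_{j}(t)$, series in $T_{j}=[\gamma_{j}]-1$, the bound $v(T_{j})\ge p^{-n}$ extracted from $I\su[0,p^{n}]$, minimisation at $k=p^{n}$ --- is the right reconstruction, but your central estimate is a genuine gap. The inequality $v\bigl(\binom{s}{k}\bigr)\ge v(s)-v(k)$ is a fact about $s\in\Z_{p}$ (it follows from $k\binom{s}{k}=s\binom{s-1}{k-1}$ together with $\binom{s-1}{k-1}\in\Z_{p}$); it is \emph{false} for $s$ a section of a general $p$-adic formal scheme over $\frak{W}_{I}^{0}$, which is exactly the generality the proposition demands. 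Concretely, take $s=3U\in3\Z_{3}\langle U\rangle$ and $k=10$: the $U^{2}$-coefficient of $\binom{3U}{10}$ is $9H_{9}/10$ with $H_{9}=\sum_{i=1}^{9}i^{-1}$ and $v_{3}(H_{9})=-2$, so $\binom{3U}{10}\in\Z_{3}\langle U\rangle\setminus3\Z_{3}\langle U\rangle$, while your bound predicts valuation $\ge v_{3}(3U)-v_{3}(10)=1$. The correct general statement, for $s\in p^{n'}\O_{S}$, is only
\[
\binom{s}{k}\in p^{\,n'-v(k)-D_{k}}\O_{S},\qquad D_{k}=\sum_{i<k}\max\bigl(0,v(i)-n'\bigr),
\]
and $D_{k}$ becomes nonzero precisely in the range $k>p^{n'+1}$ that matters for convergence. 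One must then prove that the gain $\lfloor k/p^{n}\rfloor$ coming from $T_{j}^{k}$ absorbs $D_{k}$ and still leaves margin $m+v(q)$; this is true, but it is exactly the ``tight balance'' you flag and then bypass, so as written neither the convergence at non-classical points nor the refined congruence into $1+qp^{m}\G_{a}$ is established. The clean repair is to define $\kappa_{p}(t)=\exp\bigl(\sum_{j}s_{j}(t)\log(1+T_{j})\bigr)$: on $\W_{I}^{0}$ one has $p^{n-1}\log(1+T_{j})\in B_{\ga,I}^{0}$, so with $s_{j}(t)\in p^{n+m}\O_{S}$ the exponential converges and visibly lands in $1+p^{m+1}\G_{a}$, with no binomial coefficients at all.

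Separately, your $p=2$ bookkeeping is off by one. The torsion-free part of $\O_{F_{i}}\inv$ logs onto a lattice of valuation $2$, not $1$ (e.g. $\log(1+2\Z_{2})=4\Z_{2}$), so for $t\in1+2^{n+m+3}\cdot\Res_{\O_{L}/\Z}(\G_{a})$ one only gets $v(s_{j}(t))\ge n+m+1$, not $n+m+2$; the resulting term valuations are $\ge m+2=m+v(q)$, not $\ge m+\epsilon=m+3$ as you claim. Your target $1+q2^{m}\G_{a}$ is still reached, but via the bound $m+v(q)$ itself, not via the chain ``$\ge m+\epsilon\ge m+v(q)$'', whose first inequality your estimates do not actually yield.
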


\begin{note}
\label{note:esisteelementou_I}Note that, for a $p$-adically complete
and separated ring $A$ with ideal of definiton $I$, for $n$ large
enough the exponential power series is convergent on $I^{n}$ (indeed
one can take $n$ such that $I^{n}\su pA$) , it follows that there
exists an element $u_{I}\in p^{1-n}$ such that $k_{I}\left(t\right)=\exp\left(u_{I}\cdot\log\left(t\right)\right)$
for $t\in1+p^{n}\O_{L}\ot_{\Z}A$.
\end{note}

\subsection{Formal $\protect\O_{F}$-module bundles with marked sections}

In this section we consider additional linear structures on formal
vector bundles as introduced in \cite{AI}. Let $\Q_{p}\su F$ be
a finite unramified and $\sS$ be the set of embeddings $\s:F\to F$.
Let moreover $\X\to S=\spf\left(\O_{F}\right)$ be an admissible scheme
with an invertible ideal of definition ${\cal I}=p^{n}\O_{\X}$. We
will denote with $\overline{\X}$ the reduction modulo ${\cal I}$.

Given an $\O_{F}\ot_{\Z_{p}}\O_{\X}$-module $\E$ and $\s\in\gS$
we denote with $\E\left(\s\right)$ the sub $\O_{\X}$-module on which
$\O_{F}$ acts via $\s$. Since $F$ is unramified over $\Q_{p}$
we have a ring isomorphism
\begin{align*}
\O_{F}\ot_{\Z_{p}}\O_{\X} & \simeq\prod_{\s\in\gS}\O_{\X,\s}\\
x\ot1 & \mapsto\left(\s\left(x\right)\right)_{\s}
\end{align*}
and a decomposition
\[
\E\simeq\bigoplus_{\s\in\gS}\E\left(\s\right).
\]
\begin{thm}
\label{thm:sezioni marcate}Let $\E$ be a coherent locally free $\O_{F}\ot_{\Z_{p}}\O_{\X}$-module,
and let $s\in H^{0}\left(\overline{\X},\overline{\E}\right)$ be a
non-zero element such that the sub-$\O_{F}\ot_{\Z_{p}}\O_{\X}$-module
generated by the lifts of $s$ is locally a direct summand. Then
\begin{enumerate}
\item there exists an admissible morphism
\[
\pi:\V_{\O_{F}}\left(\E\right)\to\X
\]
representing the functor in $\O_{F}\ot_{\Z_{p}}\O_{\X}$-modules
\[
\V_{\O_{F}}\left(\E\right):\left(t:\frak{Y}/\X\right)\mapsto{\rm Hom}_{\O_{\frak{Y}}}\left(t^{\ast}\E,\O_{\frak{Y}}\right)={\rm Hom}_{\O_{\X}}\left(\E,t_{\ast}\O_{\frak{Y}}\right),
\]
For $\spf\left(R\right)\su\X$ we have
\[
\pi_{\ast}\O_{\V_{\O_{F}}\left(\E\right)}\left(\spf\left(R\right)\right)=\widehat{\bigotimes_{\s\in\sS}}{\rm Sym}_{R}\left(\E_{|R}\left(\s\right)\right).
\]
Moreover we have a commutative diagram
\[
\xymatrix{\V_{\O_{F}}\left(\E\right)\ar[rr]^{\pi}\ar[dr] &  & \X\\
 & \X\times_{\spf\left(\Z_{p}\right)}\spf\left(\O_{F}\right)\ar[ur]
}
\]
\item the functor 
\[
\V_{0}\left(\E,s\right):\left(t:\frak{Y}/\X\right)\mapsto\left\{ f\in\V_{\O_{F}}\left(\E\right)\left(t:\frak{Y}/\X\right)\,\mid\,\left(f\mod p^{n}\right)\left(t^{\ast}s\right)\in\underline{\left(\O_{F}/p^{n}\right)\inv}\right\} 
\]
is represented by an open subset of an admissible blow-up of $\V_{\O_{F}}\left(\E\right)$.
\end{enumerate}
\end{thm}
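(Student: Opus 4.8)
The plan is to isolate the only place where unramifiedness of $F/\Qp$ is used, namely the idempotent splitting $\O_F\ot_{\Zp}\O_{\X}\simeq\prod_{\s\in\sS}\O_{\X,\s}$ recalled above, which decomposes every coherent locally free $\O_F\ot_{\Zp}\O_{\X}$-module as $\E=\bigoplus_{\s\in\sS}\E(\s)$ with each $\E(\s)$ coherent locally free over $\O_{\X}$. Modulo this grading the statement decouples into $\#\sS$ independent copies of the non-equivariant formal vector bundle with marked section of \cite[Section 2]{AI}, and $\O_F$ serves only to record the decomposition and to endow the output with its linear structure. I would establish (1) first and then deduce (2) by the same device.

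For (1) I would argue on an affine cover and glue. Over $\spf(R)\su\X$ write $\E$ as a finite locally free $\O_F\ot_{\Zp}R$-module $M=\bigoplus_{\s}M(\s)$ and put $A=\widehat{\bigotimes_{\s}}{\rm Sym}_R(M(\s))$, the $p$-adic completion of ${\rm Sym}_R(M)=\bigotimes_{\s}{\rm Sym}_R(M(\s))$, setting $\V_{\O_F}(\E)|_{\spf(R)}=\spf(A)$. The adjunction between the completed symmetric algebra and $\Hom$ gives, for every $t\colon\frak Y\to\spf(R)$ with $\O(\frak Y)=S$, a functorial bijection between continuous $R$-algebra maps $A\to S$ and $R$-linear maps $M\to S$, i.e.\ elements of $\Hom_{\O_{\frak Y}}(t\ua\E,\O_{\frak Y})$; this is exactly the functor of the statement, and it glues because $A$ is functorial in $R$. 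The equality $\pi_\ast\O_{\V_{\O_F}(\E)}(\spf(R))=A$ is then the asserted completed tensor product. Finally the $\O_F$-action on $M$ makes $\V_{\O_F}(\E)$ an $\O_F$-module formal scheme over $\X$, equivalently a formal scheme over $\X\times_{\spf(\Zp)}\spf(\O_F)$ refining $\pi$, which yields the commutative triangle.

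For (2) I would first transport the hypothesis to the graded pieces: writing a lift $\widetilde s=\sum_{\s}\widetilde s_\s$, linear independence of the embeddings shows that the $\O_F\ot_{\Zp}\O_{\X}$-submodule generated by $\widetilde s$ equals $\bigoplus_{\s}\O_{\X}\widetilde s_\s$, so it is locally a direct summand if and only if each $\O_{\X}\widetilde s_\s\su\E(\s)$ is. Evaluating the universal homomorphism over $\V_{\O_F}(\E)$ at $s$ produces, component by component, global sections $g_\s:=f(s_\s)$ of $\O_{\overline{\V_{\O_F}(\E)}}$, and under $\O_F/p^n\ot\O_{\overline{\frak Y}}\simeq\prod_{\s}\O_{\overline{\frak Y}}$ the condition $(f\bmod p^n)(t\ua s)\in\underline{(\O_F/p^n)\inv}$ becomes the simultaneous invertibility of the $g_\s$. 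For each fixed $\s$ this is exactly the marked-section locus of \cite[Section 2]{AI}: an open subscheme of the admissible blow-up of the formal vector bundle $\V(\E(\s))$ along $(g_\s,p^n)$, namely the chart on which the ideal $(g_\s,p^n)$ is generated by $g_\s$. Since $\V_{\O_F}(\E)$ is the fibre product over $\X$ of the $\V(\E(\s))$, taking the fibre product over $\X$ of these charts represents $\V_0(\E,s)$, and I would realise the result as the distinguished chart of a single admissible blow-up of $\V_{\O_F}(\E)$ centred at an explicit ideal built from the $g_\s$ and $p^n$.

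The routine points — the $\Hom$–${\rm Sym}$ adjunction, the gluing and functoriality in $R$, and the admissibility of the blow-up (its centre contains a power of $p^n$) — I would borrow verbatim from \cite{AI}. The main obstacle is the bookkeeping in (2): one must check that the integral, constant unit condition valued in $\underline{(\O_F/p^n)\inv}$ matches on the nose the product of the component-wise invertibility conditions after the splitting $\O_F/p^n\ot\O_{\overline{\frak Y}}\simeq\prod_{\s}\O_{\overline{\frak Y}}$, and, crucially, that forming the fibre product over $\X$ is compatible with passing to blow-up charts, so that the $\#\sS$ component blow-ups assemble into a single admissible blow-up of $\V_{\O_F}(\E)$ whose distinguished chart is $\V_0(\E,s)$. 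Making this identification hold exactly, rather than only up to a further blow-up, is the delicate step; everything else is the AI machinery applied one embedding at a time.
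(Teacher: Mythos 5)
Your treatment of part (1) is correct and is essentially the paper's (and \cite{AI}'s) construction: work on an affine cover, take $\widehat{\bigotimes}_{\s\in\sS}{\rm Sym}_{R}\left(\E_{|R}\left(\s\right)\right)$, identify the functor of points via the ${\rm Sym}$--${\rm Hom}$ adjunction, glue, and read the commutative triangle off the $\O_{F}$-structure. No objection there.

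Part (2), however, rests on a misreading of the defining condition, and this is a genuine gap. The symbol $\underline{\left(\O_{F}/p^{n}\right)\inv}$ denotes the \emph{constant} sheaf: under $\O_{F}\ot_{\Z_{p}}\O_{\overline{\frak{Y}}}\simeq\prod_{\s}\O_{\overline{\frak{Y}}}$ its sections are, locally, the tuples $\left(\s\left(u\right)\right)_{\s}$ for a \emph{single} locally constant $u\in\left(\O_{F}/p^{n}\right)\inv$. It is not the sheaf of units of $\O_{F}/p^{n}\ot\O_{\overline{\frak{Y}}}$, so the condition is strictly stronger than the ``simultaneous invertibility of the $g_{\s}$'' that you use: for instance $g_{\s}=1+pT$ is an invertible section of $\O_{\overline{\frak{Y}}}$ on any base with a nonconstant function $T$, but it does not lie in the constant sheaf. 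Hence the functor you represent strictly contains $\V_{0}\left(\E,s\right)$, and your object is not the one claimed. (Moreover the invertibility locus would need no blow-up at all --- it is an open formal subscheme of $\V_{\O_{F}}\left(\E\right)$ --- and the chart of the blow-up of $\left(g_{\s},p^{n}\right)$ on which the ideal is generated by $g_{\s}$ is in any case not that locus, nor is it the marked-section locus of \cite{AI}, whose condition is a congruence $g_{\s}\equiv\s\left(u\right)\mod p^{n}$ and whose chart is the one where the ideal is generated by $p^{n}$.) The structure the paper records immediately after the statement is a disjoint union $\V_{0}\left(\E,s\right)=\coprod_{u\in S}\V_{0}^{u}\left(\E,s\right)$ indexed by a set $S$ of lifts of $\left(\O_{F}/p^{n}\right)\inv$, where $\V_{0}^{u}$ is locally cut out by $X_{1,\s}=\s\left(u\right)+p^{n}Z_{\s,u}$ for \emph{all} $\s$ with the \emph{same} $u$, i.e.\ the chart of the admissible blow-up along $\left(p^{n},\,X_{1,\s}-\s\left(u\right)\,:\,\s\in\sS\right)$ on which the ideal is generated by $p^{n}$. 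Note finally that even after replacing ``invertible'' by ``constant unit'', your fibre-product-over-$\X$ device would still overcount: it would allow an independent constant $u_{\s}$ for each embedding, producing a union indexed by $\left(\left(\O_{F}/p^{n}\right)\inv\right)^{\sS}$, whereas the constant sheaf sits diagonally in $\prod_{\s}\O_{\overline{\frak{Y}}}$ via $u\mapsto\left(\s\left(u\right)\right)_{\s}$ and forces one $u$ for every $\s$. This diagonal structure is exactly what makes the $\left(\O_{F}/p^{n}\right)\inv$-action and the descent arguments later in the paper work, so the distinction is not cosmetic.
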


We refer to the element $s$ of Theorem \ref{thm:sezioni marcate}
as a \emph{marked section} of $\E$ and to the pair $\left(\E,s\right)$
as an ${\rm MS}_{\O_{F}}$-datum. L\foreignlanguage{english}{et $S\su\O_{F}\inv$
be a faithful set of lifts of the elements of $\left(\O_{F}/p^{n}\right)\inv$
and fix $u\in S$, let moreover $\left\{ e_{1,\s},\dots,e_{m,\s}\right\} $
be a basis for $\E_{|R}\left(\s\right)$ for $\spf\left(R\right)\su\X$.
Then
\begin{align*}
\pi_{\ast}\O_{\V_{\O_{F}}\left(\E\right)}\left(\spf\left(R\right)\right) & =\widehat{\bigotimes_{\s\in\gS}}\pi_{\ast}\O_{\V_{\O_{F}}\left(\E\right)_{\s}}\left(\spf\left(R\right)\right)\\
 & =\widehat{\bigotimes_{\s\in\gS}}R\left\langle X_{1,\s},\dots,X_{m,\s}\right\rangle .
\end{align*}
}The formal scheme representing $\V_{0}\left(\E,s\right)$ then is
the disjoint union of the schemes $\V_{0}^{u}\left(\E,s\right)$ index
by $u\in S$, where
\[
\V_{0}^{u}\left(\E,s\right)\left(t:\frak{Y}/\X\right)=\left\{ f\in\V_{0}\left(\E,s\right)\left(t:\frak{Y}/\X\right)\,\mid\,\left(f\mod p^{n}\right)\left(t^{\ast}s\right)\in\overline{u}\right\} .
\]
Denote with\foreignlanguage{english}{ $\frak{f}:\V_{0}\left(\E,s\right)\to\X$
the composition 
\[
\V_{0}\left(\E,s\right)\overset{\xi}{\longrightarrow}\V_{\O_{F}}\left(\E\right)\overset{\pi}{\longrightarrow}\X,
\]
then the map
\[
\xi^{\#}:\pi_{\ast}\O_{\V_{\O_{F}}\left(\E\right)}\to\frak{f}_{\ast}\O_{\V_{0}\left(\E,s\right)}
\]
is locally
\begin{align*}
R\left\langle X_{1,\s},X_{2,\s},\dots,X_{m,\s}\,\middle|\,\s\in\sS\right\rangle  & \to\prod_{u\in S}R\left\langle Z_{\s,u},X_{2,\s,u},\dots,X_{m,\s,u}\,\middle|\,\s\in\sS\right\rangle \\
X_{i,\s} & \mapsto\begin{cases}
\left(p^{n}Z_{\s,u}+\s\left(u\right)\right)_{u\in S} & i=1\\
\left(X_{i,\s,u}\right)_{u\in S} & i\neq1
\end{cases}
\end{align*}
}
\begin{note}
\label{note:filtrazionesulfibrato}Let $\E$ be a coherent locally
free $\O_{F}\ot_{\Z_{p}}\O_{\X}$-module and consider the formal scheme
$\pi:\V_{\O_{F}}\left(\E\right)\to\X$. For $\spf\left(R\right)\su\X$
such that $\E_{|\spf\left(R\right)}$ is free, say with a basis $\left\{ e_{i,\s}\,\mid\,\begin{array}{c}
i=1,\dots,n\\
\s\in\sS
\end{array}\right\} $ we define\footnote{There many other possibilities leading to the same statements, for
example i\foreignlanguage{english}{f $\mathbf{k}=\left(k_{\s}\right)_{\s}\in\mathbb{N}^{\sS}$
set $\mid\mathbf{k}\mid=\sum k_{i}$, then one could define
\[
F^{\mathbf{k}}\pi_{\ast}\O_{\V_{\O_{F}}\left(\E\right)}\left(\spf\left(R\right)\right)=\bigotimes_{\begin{array}{c}
\s\in\sS\\
\mid\mathbf{k}\mid=h
\end{array}}R\left[X_{1,\s},\dots,X_{n,\s}\right]_{\le k_{\s}}
\]
However we picked the one indexed by a totally ordered set.}}
\[
F^{h}\pi_{\ast}\O_{\V_{\O_{F}}\left(\E\right)}\left(\spf\left(R\right)\right)=\bigotimes_{\s\in\sS}R\left[X_{1,\s},\dots,X_{n,\s}\right]_{\le h}.
\]
We see that picking another basis of $\E\left(\s\right)$ doesn't
affect the $R$-module $R\left[X_{1,\s},\dots,X_{n,\s}\right]_{\le h}$,
therefore the modules $F^{h}\pi_{\ast}\O_{\V_{\O_{F}}\left(\E\right)}$
define an increasing filtration of $\pi_{\ast}\O_{\V_{\O_{F}}\left(\E\right)}$,
moreover we have
\[
\pi_{\ast}\O_{\V_{\O_{F}}\left(\E\right)}=\widehat{\varinjlim_{h}}F^{h}\pi_{\ast}\O_{\V_{\O_{F}}\left(\E\right)}
\]
and
\[
\gr^{h}F^{\b}\pi_{\ast}\O_{\V_{\O_{F}}\left(\E\right)}=\bigotimes_{\s\in\sS}R\left[X_{1,\s},\dots,X_{n,\s}\right]_{h}.
\]
\foreignlanguage{english}{Suppose that for every $\s$ we have $s_{\s}=e_{1,\s}$,
let $\FF\su\E$ be the invertible local direct summand of lifts of
the marked section $s$. Note that $\left(\FF,s\right)$ is also an
${\rm MS}_{\O_{F}}$-datum and locally we have
\[
\FF_{|\spf\left(R\right)}=\left\langle e_{1,\s}\,\mid\,\s\in\sS\right\rangle .
\]
In this case we have a natural increasing filtration $\fil^{h}\frak{f}_{\ast}\O_{\V_{0}\left(\E,s\right)}$
for $h\ge0$ such that}
\end{note}

\begin{enumerate}
\item $\frak{f}_{\ast}\O_{\V_{0}\left(\E,s\right)}=\widehat{\varinjlim_{h}}F^{h}\frak{f}_{\ast}\O_{\V_{0}\left(\E,s\right)};$
\item $\gr^{h}F^{\b}\frak{f}_{\ast}\O_{\V_{0}\left(\E,s\right)}=\frak{f}_{\ast}\O_{\V_{0}\left({\cal F},s\right)}\ot_{\O_{\X}}\gr^{h}F^{\b}\pi_{\ast}\O_{\V_{\O_{F}}\left(\frac{\E}{\FF}\right)}.$
\end{enumerate}
Indeed it follows from the functorial description that
\[
\V_{0}\left(\E,s\right)=\V_{0}\left(\FF,s\right)\times_{\V_{\O_{F}}\left(\FF\right)}\V_{\O_{F}}\left(\E\right),
\]
then
\begin{align*}
\frak{f}_{\ast}\O_{\V_{0}\left(\E,s\right)} & =\frak{f}_{\ast}\O_{\V_{0}\left(\FF,s\right)}\widehat{\otimes}_{\pi_{\ast}\O_{\V_{\O_{F}}\left(\FF\right)}}\pi_{\ast}\O_{\V_{\O_{F}}\left(\E\right)}\\
 & =\frak{f}_{\ast}\O_{\V_{0}\left(\FF,s\right)}\widehat{\otimes}_{\O_{\X}}\pi_{\ast}\O_{\V_{\O_{F}}\left(\frac{\E}{\FF}\right)}.
\end{align*}
and we get
\[
F^{h}\frak{f}_{\ast}\O_{\V_{0}\left(\E,s\right)}=\frak{f}_{\ast}\O_{\V_{0}\left(\FF,s\right)}\ot_{\O_{\X}}F^{h}\pi_{\ast}\O_{\V_{\O_{F}}\left(\frac{\E}{\FF}\right)}
\]
 Locally we have
\[
\fil^{h}\frak{f}_{\ast}\O_{\V_{0}\left(\E,s\right)}\left(\spf\left(R\right)\right)=\prod_{u\in S}R\left\langle Z_{\s,u}\,\mid\,\s\in\sS\right\rangle \left[X_{i,\s,u}\,\middle|\,\begin{array}{c}
i=2,\dots,n\\
\s\in\sS
\end{array}\right]_{\le h}.
\]

\subsubsection{Connections on the sheaf of functions \label{subsec:Connections-on-the}}

In this section we refer to \cite[Section 2]{BeOg} for Grothendieck's
formalism on connections. Let $X\to S$ be a scheme and let ${\cal A}={\cal A}_{X/S}$
be the $\O_{X}$-algebra $\O_{X}\op\Omega_{X/S}^{1}$ where the product
is (locally) given by 
\[
\left(s,\omega\right)\cdot\left(t,\tau\right)=\left(st,s\tau+t\omega\right).
\]
Note that $\Omega_{X/S}^{1}\su{\cal A}$ is a square-zero ideal. We
have two obvious ring maps $j_{1},j_{2}:\O_{X}\to{\cal A}$ given
by
\[
j_{1}\left(s\right)=\left(s,0\right)\quad\quad j_{2}\left(s\right)=\left(s,ds\right)
\]
and the natural quotient $\Delta:{\cal A}\to\O_{X}$. 

Let now $\X\to S$ be a formal scheme and $\left(\E,s\right)$ be
an ${\rm MS}_{\O_{F}}$-datum with $\nabla:\E\to\E\widehat{\ot}_{\O_{\X}}\Omega_{\X/S}^{1}$
an integrable connection. We say that it is an ${\rm MS}_{\O_{F}}$-connection
if
\begin{itemize}
\item it is compatible with the $\O_{F}$-structure, that is
\[
\nabla\left(\E\left(\s\right)\right)\su\E\left(\s\right)\widehat{\ot}_{\O_{\X}}\Omega_{\X/S}^{1}.
\]
In this case the restriction $\nabla_{\s}=\nabla_{|\E\left(\s\right)}$
is again an integrable connection and
\item $s$ is horizontal for $\overline{\nabla}$, that is $\overline{\nabla}\left(s\right)=0$.
\end{itemize}
In this case for every $\s$ we have an ${\cal A}$-linear isomoprhism
\[
\epsilon_{\s}:\E\left(\s\right)\ot_{j_{2}}{\cal A}\to\E\left(\s\right)\ot_{j_{1}}{\cal A}
\]
such that $\overline{\epsilon_{\s}}\left(s_{\s}\ot1\right)=s_{\s}\ot1$
and $\epsilon_{\s}\ot_{\Delta}\O_{\X}={\rm id}_{\E\left(\s\right)}$
plus a cocycle condition translating integrability. In particular
$\epsilon_{\s}$ is an isomorphism of ${\rm MS}_{\O_{F}}$-data over
${\cal A}$ and by functoriality we have an isomorphism of $\underline{\spf}_{\O_{\X}}\left({\cal A}\right)$-formal
schemes
\[
\V_{0}\left(\E,s\right)\times_{j_{2}}\underline{\spf}_{\O_{\X}}\left({\cal A}\right)\to\V_{0}\left(\E,s\right)\times_{j_{1}}\underline{\spf}_{\O_{\X}}\left({\cal A}\right)
\]
giving an isomorphism
\[
\frak{f}_{\ast}\O_{\V_{0}\left(\E,s\right)}\otimes_{\O_{\X}}{\cal A}\overset{\epsilon_{0}}{\longrightarrow}\frak{f}_{\ast}\O_{\V_{0}\left(\E,s\right)}\otimes_{\O_{\X}}{\cal A}.
\]
In view again of Grothendieck's formalism we have a commutative diagram
where the vertical arrows are integrable connections
\[
\xymatrix{\E\ar[r]\ar[d]_{\nabla} & \pi_{\ast}\O_{\V_{\O_{F}}\left(\E\right)}\ar[r]\ar[d]_{\nabla_{\O_{F}}} & \frak{f}_{\ast}\O_{\V_{0}\left(\E,s\right)}\ar[d]^{\nabla_{0}}\\
\E\widehat{\ot}_{\O_{\X}}\Omega_{\X/S}^{1}\ar[r] & \pi_{\ast}\O_{\V_{\O_{F}}\left(\E\right)}\widehat{\ot}_{\O_{\X}}\Omega_{\X/S}^{1}\ar[r] & \frak{f}_{\ast}\O_{\V_{0}\left(\E,s\right)}\widehat{\ot}_{\O_{\X}}\Omega_{\X/S}^{1}
}
\]
This allows to give a local description of the connections. If $\spf\left(R\right)\su\X$
is such that $\E_{|\spf\left(R\right)}$ is free with $R$-basis
\[
\left\{ e_{1,\s},\dots,e_{n,\s}\,\mid\,\s\in\sS\right\} \quad\mbox{with}\quad\overline{e_{1,\s}}=s_{\s},
\]
suppose 
\[
\nabla\left(e_{i,\s}\right)=\begin{cases}
\sum_{j}\ga r_{1,j,\s}e_{j,\s}\ot\omega_{1,j,\s} & i=1\\
\sum_{j}r_{i,j,\s}e_{j,\s}\ot\omega_{i,j,\s} & {\rm otherwise}.
\end{cases},
\]
then 
\[
\nabla_{\O_{F}}\left(X_{i,\s}\right)=\begin{cases}
\sum_{j}\ga r_{1,j,\s}X_{j,\s}\ot\omega_{1,j,\s} & i=1\\
\sum_{j}r_{i,j,\s}X_{j,\s}\ot\omega_{i,j,\s} & {\rm otherwise}.
\end{cases}
\]
We have $\nabla_{0}\left(\ga Z_{\s,u}\right)=\nabla_{\O_{F}}\left(X_{1,\s}\right)$
so that
\[
\nabla_{0}\left(Z_{\s,u}\right)=\sum_{j}r_{1,j,\s}X_{j,\s,u}\ot\omega_{1,j,\s}\quad\mbox{for every \ensuremath{\s\in\sS}. }
\]

Summing up we have, in view of the local descriptions and Leibniz'
rule
\begin{prop}
Let $\left(\E,s\right)$ be an ${\rm MS}_{\O_{F}}$-datum over $\X$
and let $\nabla:\E\to\E\widehat{\otimes}_{\O_{\X}}\Omega_{\X/S}^{1}$
be an integrable ${\rm MS}_{\O_{F}}$-connection. Then the connection
$\nabla_{0}$ on $\frak{f}_{\ast}\O_{\V_{0}\left(\E,s\right)}$ is
integrable, satisfies Griffith's transversality, that is
\[
\nabla_{0}\left(\fil^{h}\frak{f}_{\ast}\O_{\V_{0}\left(\E,s\right)}\right)\su\fil^{h+1}\frak{f}_{\ast}\O_{\V_{0}\left(\E,s\right)}\widehat{\otimes}_{\O_{\X}}\Omega_{\X/S}^{1}
\]
and the induced map
\[
\gr^{h}\left(\nabla_{0}\right):\gr^{h}\left(\frak{f}_{\ast}\O_{\V_{0}\left(\E,s\right)}\right)\to\gr^{h+1}\left(\frak{f}_{\ast}\O_{\V_{0}\left(\E,s\right)}\right)\widehat{\otimes}_{\O_{\X}}\Omega_{\X/S}^{1}
\]
is $\O_{\X}$-linear.
\end{prop}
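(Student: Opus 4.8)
The plan is to verify all three assertions locally and then to feed into them the explicit description of $\nabla_{0}$ recorded just above the statement. Concretely, I would pass to an affine $\spf\left(R\right)\su\X$ over which $\E$ is free with a basis $\left\{ e_{i,\s}\right\}$ adapted to the marked section (so $\overline{e_{1,\s}}=s_{\s}$), and use the formulas $\nabla_{0}\left(Z_{\s,u}\right)=\sum_{j}r_{1,j,\s}X_{j,\s,u}\ot\omega_{1,j,\s}$ and $\nabla_{0}\left(X_{i,\s,u}\right)$ (induced from $\nabla_{\O_{F}}\left(X_{i,\s}\right)=\sum_{j}r_{i,j,\s}X_{j,\s}\ot\omega_{i,j,\s}$ via the substitution $X_{1,\s}\mapsto p^{n}Z_{\s,u}+\s\left(u\right)$), together with Leibniz' rule and the rule $\nabla_{0}\left(r\right)=dr$ for $r\in R$. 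Since all three properties are compatible with the gluing that defines $\nabla_{0}$, and the filtration $\fil^{\b}$ is independent of the chosen basis by Note~\ref{note:filtrazionesulfibrato}, this reduction is harmless.

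For integrability I would argue functorially rather than by a direct curvature computation. The ${\rm MS}_{\O_{F}}$-connection $\nabla$ produces, for each $\s$, the ${\cal A}$-linear isomorphism $\epsilon_{\s}$ of ${\rm MS}_{\O_{F}}$-data satisfying the cocycle condition that encodes $\nabla^{2}=0$; forming the product over $\s$ and applying the functor $\V_{0}\left(-,s\right)$ yields the isomorphism $\epsilon_{0}$ on $\frak{f}_{\ast}\O_{\V_{0}\left(\E,s\right)}\ot_{\O_{\X}}{\cal A}$, and the cocycle condition is preserved by this functorial construction. By Grothendieck's dictionary this is exactly $\nabla_{0}^{2}=0$. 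Equivalently one may verify $\nabla_{0}^{2}=0$ on the generators $Z_{\s,u},X_{i,\s,u}$, the required relations among the $r_{i,j,\s}$ and $\omega_{i,j,\s}$ being precisely those imposed by the integrability of $\nabla$.

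For Griffiths' transversality the crux is a degree count in the non-marked variables $X_{i,\s,u}$ with $i\ge2$, which are the only ones the filtration records. From the local formulas one reads off that $\nabla_{0}\left(X_{i,\s,u}\right)$ and $\nabla_{0}\left(Z_{\s,u}\right)$ each have non-marked degree at most $1$, while $\nabla_{0}\left(r\right)=dr$ has degree $0$. A section of $\fil^{h}$ is a polynomial of degree $\le h$ in the $X_{i,\s,u}$ with coefficients in the Tate algebra $\prod_{u}R\left\langle Z_{\s,u}\right\rangle $; applying Leibniz' rule, differentiating a factor $X_{i,\s,u}$ keeps the degree $\le h$, whereas differentiating a $Z$-coefficient raises it by at most $1$ through $\nabla_{0}\left(Z_{\s,u}\right)$. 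Hence $\nabla_{0}\left(\fil^{h}\right)\su\fil^{h+1}\widehat{\otimes}_{\O_{\X}}\Omega_{\X/S}^{1}$, and since $\fil^{\b}$ is increasing the inclusion is automatic once the degree increase is bounded by $1$.

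Finally, for the $\O_{\X}$-linearity of $\gr^{h}\left(\nabla_{0}\right)$, the only obstruction to $R$-linearity of $\nabla_{0}$ itself is the Leibniz defect $\nabla_{0}\left(rx\right)-r\,\nabla_{0}\left(x\right)=dr\ot x$ for $r\in R$ and $x\in\fil^{h}$; as $dr\ot x\in\fil^{h}\widehat{\otimes}_{\O_{\X}}\Omega_{\X/S}^{1}\su\fil^{h+1}\widehat{\otimes}_{\O_{\X}}\Omega_{\X/S}^{1}$, it maps to zero in $\gr^{h+1}=\fil^{h+1}/\fil^{h}$, so the induced map is $\O_{\X}$-linear. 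The step demanding the most care is the bookkeeping in the marked direction: the coordinate $X_{1,\s}$ has been replaced by $p^{n}Z_{\s,u}+\s\left(u\right)$, so one must check that neither the $Z$-term nor the constant $\s\left(u\right)$ contributes a filtration-lowering (or, on the graded level, merely filtration-preserving) piece that would spoil the degree count. This is exactly where the horizontality $\overline{\nabla}\left(s\right)=0$, responsible for the factor $\ga$ in the first row of the connection matrix, is used; the remaining integrability descent is formal, and the transversality and graded-linearity arguments are then clean consequences of the degree count.
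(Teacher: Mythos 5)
Your proposal is correct and takes essentially the same route as the paper: the paper's proof is literally the discussion preceding the statement ("in view of the local descriptions and Leibniz' rule"), namely Grothendieck's formalism producing $\epsilon_{0}$ and hence $\nabla_{0}$ functorially (which carries the cocycle condition, i.e.\ integrability), the local formulas for $\nabla_{\O_{F}}$ and $\nabla_{0}$ on the coordinates $X_{i,\s,u}$ and $Z_{\s,u}$, and then the degree count via Leibniz' rule. Your fleshed-out version of that count, the Leibniz-defect argument for $\O_{\X}$-linearity on graded pieces, and the remark that horizontality $\overline{\nabla}\left(s\right)=0$ supplies the factor $\ga$ making $\nabla_{0}\left(Z_{\s,u}\right)$ integral, all coincide with the paper's intended argument.
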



\section{The Igusa tower}

We give here a brief account of the construction of the partial Igusa
tower over a Hilbert modular formal scheme as described in \cite[Section 3]{AIP1}.

Let $N\ge4$ be an integer and $p$ a prime number not dividing $N$.
Let ${\cal D}_{L}$ be the different ideal of $L$ over $\Q$. Fix
moreover a fractional ideal $\c\su L$ with cone of totally positive
elements $\c^{+}$. Then there exists a scheme $M_{N}\left(\c\right)\to\spec\left(\Z_{\left(p\right)}\right)$
that classifies quadruples $\left(A,\iota,\Psi,\gl\right)$ such that
$A\to S\to\spec\left(\Z_{\left(p\right)}\right)$ is an abelian scheme
with real multiplication by $\O_{L}$ induced by the ring embedding
$\iota:\O_{L}\to{\rm End}_{S}\left(A\right)$, $\Psi:\mu_{N}\ot{\cal D}_{L}^{-1}\to A$
is a closed immersion as $\O_{L}$-modules over $S$. Finally, if
${\cal M}_{A}\su{\rm Hom}_{\O_{L}}\left(A,A^{\lor}\right)$ is the
sheaf of symmetric $\O_{L}$-linear morphisms and ${\cal M}_{A}^{+}\su{\cal M}_{A}$
is the subset of polarisations, then $\gl:\left({\cal M}_{A},{\cal M}_{A}^{+}\right)\simeq\left(\c,\c^{+}\right)$
is an isomorphism of étale sheaves. We require that the natural map
$A\ot_{\O_{L}}\c\to A^{\lor}$ is an isomorphism. Let $M\to\spec\left(\Z_{\left(p\right)}\right)$
be a fixed toroidal compactification, it comes with a universal semiabelian
scheme $A\to X$. We let $\A\to\frak{M}\to\spf\left(\Z_{p}\right)$
denote the associated $p$-adic formal schemes. If $e:\frak{M}\to\A$
is the unit section, denote
\[
\uo_{\A}=e^{\ast}\Omega_{\A/\X}^{1},
\]
it is an invertible $\O_{\X}\ot_{\Z}\O_{L}$-module. 

We let $\Hdg\su\O_{\frak{M}}$ be the Hodge ideal, it is defined as
the ideal of lifts of the Hasse invariant. 

\subsection{Canonical subgroups\label{subsec:Canonical-subgroups}}

Fix a $\Z_{p}$-algebra $A_{0}$ and suppose it is an integral domain
and the $\ga$-adic completion of a $\Z_{p}$-algebra of finite type,
where $\ga\in A_{0}\backslash\left\{ 0\right\} $ has $p\in\ga A_{0}$.
Let $\Y\to\spf\left(A_{0}\right)$ be the formal base change of $\frak{M}$,
then for $r\in\mathbb{N}$, the formal scheme
\[
\Y_{r}\to\Y
\]
is defined by requiring that the ideal generated by $\ga$ is divisible
by $\Hdg^{p^{r+1}}$. See \cite[Section 3.2]{AIP1} for a precise
definition. 
\begin{prop}
\label{prop:propriet=0000E0sottogruppocanonico}Assume that $p\in\ga^{p^{k}}A_{0}$.
Then for every integer $1\le n\le r+k$ one has a canonical subgroup
scheme $H_{n}$ of $\A\left[p^{n}\right]$ over $\Y_{r}$ and $H_{n}$
modulo $p\Hdg^{-\frac{p^{n}-1}{p-1}}$lifts the kernel of the $n$-th
power Frobenius. Moreover $H_{n}$ is finite flat and locally of rank
$p^{ng}$, it is stable under the action of $\O_{L}$ and its Cartier
dual $H_{n}^{D}$ is étale-locally over $A_{0}\left[\ga^{-1}\right]$
isomorphic to $\O_{L}/p^{n}$ as $\O_{L}$-modules.
\end{prop}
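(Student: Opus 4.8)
The plan is to recover the statement from the detailed construction of canonical subgroups in \cite[Section 3.2]{AIP1}, reducing everything to the theory of the Hasse invariant and of the Hodge ideal $\Hdg$. The strategy splits into three parts: first construct the level-one subgroup $H_1$ over the overconvergent neighborhood $\Y_r$, then bootstrap to all levels $n\le r+k$ by an isogeny-quotient construction, and finally read off the numerical and $\O_L$-linear properties from the canonicity of the construction together with duality. Throughout, the key geometric input is that the defining condition $\Hdg^{p^{r+1}}\mid\ga$ places us in a strict neighborhood of the ordinary locus whose width is measured by $r$, which is exactly the room needed to lift the kernel of Frobenius.

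First I would construct $H_1$. Over the ordinary locus the connected-étale sequence of $\A[p]$ splits off the kernel of Frobenius as a canonical finite flat subgroup, and the point is to propagate this to all of $\Y_r$. Following the methods of Abbes--Mokrane and Andreatta--Gasbarri, one analyzes the $p$-divisible group $\A[p^{\infty}]$ through the Hodge--Tate map and the action of Verschiebung on the truncated Dieudonné module; the lower bound on the Hasse invariant encoded by $\Hdg^{p^{r+1}}\mid\ga$ guarantees a unique subgroup of rank $p^{g}$ whose reduction modulo $p\Hdg^{-1}$ coincides with $\ker(\frob)$. The precise approximation statement comes from quantifying how the Hodge--Tate filtration separates the large-slope part from the rest, and this is where the explicit power of $\Hdg$ enters.

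Next I would pass from $H_1$ to $H_n$. The natural device is to form the chain of isogenies $\A\to\A/H_1\to(\A/H_1)/H_1'\to\cdots$, defining $H_n$ as the kernel of the $n$-fold composite, and to check that each successive quotient again lies in a (slightly narrower) overconvergent neighborhood, so that its own level-one canonical subgroup exists. The delicate point is controlling how the Hodge ideal degrades under each quotient: a single step costs a factor governed by $p$, and summing the successive losses $1+p+\cdots+p^{n-1}$ produces exactly the exponent $\frac{p^{n}-1}{p-1}$ appearing in the statement, while the hypothesis $p\in\ga^{p^{k}}A_{0}$ is precisely what bounds the total admissible number of steps by $r+k$. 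I expect this bootstrapping estimate --- tracking the Hodge ideal along the tower of quotients and proving it stays in the valid range up to level $r+k$ --- to be the main technical obstacle.

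Finally, the remaining properties follow formally. The rank $p^{ng}$ is forced because $H_n$ lifts $\ker(\frob^{n})$, which for the $g$-dimensional abelian scheme $\A$ has order $p^{ng}$, and finite flatness is inherited step by step from the construction. Stability under $\O_L$ is automatic: the canonical subgroup is characterized uniquely by its reduction, and every element of $\O_L$ acts on $\A$ by an endomorphism commuting with Frobenius, hence must carry $H_n$ to itself. For the Cartier dual I would invoke the polarization, which identifies $\A[p^{n}]$ with $\A^{\lor}[p^{n}]$ compatibly with the $\O_L$-action, together with the connected-étale type sequence $0\to H_n\to\A[p^{n}]\to\A[p^{n}]/H_n\to0$ available over $A_{0}[\ga^{-1}]$; the étale quotient is free of rank one over $\O_L/p^{n}$ by the real-multiplication structure, and dualizing via the Weil pairing then yields $H_n^{D}\simeq\O_L/p^{n}$ as $\O_L$-modules.
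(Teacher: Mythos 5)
First, a point of comparison: the paper does not prove this proposition at all. The subsection announces that it is ``a brief account of the construction \dots as described in \cite[Section 3]{AIP1}'', the sentence preceding the statement refers to \cite[Section 3.2]{AIP1} for the precise definition of $\Y_{r}$, and the proposition itself is a restatement of the results there, which in turn rest on the canonical subgroup theory of \cite{AIP2}. So your attempt has to be measured against those references rather than against anything in this paper, and in outline it does follow the route they take: existence at level one from a Hodge-height bound in the style of Abbes--Mokrane, Andreatta--Goren and Fargues, then induction on the level via the quotient $\A/H_{1}$, whose Hodge ideal is the $p$-th power of that of $\A$. (Note that this means each quotient sits in a \emph{wider} neighbourhood of the non-ordinary locus, not a ``slightly narrower'' one as you write; what is true is that each step consumes a factor $p$ of the initial budget.) Your bookkeeping is correct: summing $1+p+\dots+p^{n-1}$ produces the exponent $\frac{p^{n}-1}{p-1}$, and combining $\Hdg^{p^{r+1}}\mid\ga$ (the definition of $\Y_{r}$) with $\ga^{p^{k}}\mid p$ (the hypothesis) gives $\Hdg^{p^{r+k+1}}\mid p$, which is exactly the room needed for $n\le r+k$ iterations.

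The genuine gaps are in your final paragraph, in the treatment of $H_{n}^{D}$. First, dualizing $0\to H_{n}\to\A\left[p^{n}\right]\to Q\to0$ gives $0\to Q^{D}\to\A^{\lor}\left[p^{n}\right]\to H_{n}^{D}\to0$: $H_{n}^{D}$ is a \emph{quotient} of $\A^{\lor}\left[p^{n}\right]$, not the dual of the \'etale quotient $Q$, yet your argument silently identifies $H_{n}^{D}$ with $Q$. That identification is correct only once one knows that $H_{n}$ is its own annihilator (a maximal totally isotropic subgroup) under the Weil pairing attached to a polarisation of degree prime to $p$; this isotropy is a statement that needs proof (for instance by reduction to the Frobenius kernel and flatness), and in the present setting the existence of a prime-to-$p$ polarisation is itself nontrivial, since the moduli problem carries a $\c$-polarisation --- this is exactly the content of the next subsection of the paper (condition (DP) and Note \ref{note:autodualit=0000E0}). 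Second, the claim that the \'etale quotient ``is free of rank one over $\O_{L}/p^{n}$ by the real-multiplication structure'' does not follow from what you have established: \'etale-locally $Q$ is a constant $\O_{L}/p^{n}$-module of order $p^{ng}$, but order alone does not force freeness, as $\O_{L}/p^{n-1}\oplus\O_{L}/p$ has the same order. What is needed is the truncated Barsotti--Tate structure of the tower, namely the compatibilities $H_{n}\left[p^{m}\right]=H_{m}$ and the fact that $H_{n}/H_{m}$ is the canonical subgroup of $\A/H_{m}$; these do come out of the inductive construction, but they must be recorded and used, and they are precisely the auxiliary statements proved in \cite[Section 3.2]{AIP1} and \cite{AIP2}. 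Finally, be careful that in the stated generality inverting $\ga$ need not invert $p$ (halo-type bases are allowed), so the \'etaleness of $Q$ and of $H_{n}^{D}$ over $A_{0}\left[\ga^{-1}\right]$ should be deduced from the invertibility of $\Hdg$ there (which holds because $\ga\in\Hdg^{p^{r+1}}\O_{\Y_{r}}$), not from a characteristic-zero argument.
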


Let ${\cal S}=\spa\left(A\left[\ga^{-1}\right],A^{+}\right)$, where
$A$ is an $\ga$-adically complete $A_{0}$-algebra topologically
of finite type and $A^{+}\su A\left[\ga^{-1}\right]$ is the normalisation
of $A$. Denote with $\Y_{r}^{{\rm an}}$ the analytic adic space
associated with $\Y_{r}$. Let $p\in\ga^{p^{k}}A$ and $1\le n\le r+k$,
then it follows from Proposition \ref{prop:propriet=0000E0sottogruppocanonico}
that we have an $\O_{L}$-module object $H_{n}$ with $p^{n}$-torsion
on $\Y{}_{r}\an$ which admits therefore a decomposition
\[
H_{n}\simeq\prod_{i=1}^{d}H_{n}^{\left(i\right)}
\]
in view of (\ref{eq:decomposizioneocmpletamentoO_L}). Then $H_{n}^{\left(i\right),D}$
is étale-locally isomorphic to $\underline{\left(\O_{F_{i}}/p^{n}\right)}$.
We can consider the $\underline{\left(\O_{F_{i}}/p^{n}\right)\inv}$-torsor
$\underline{{\rm Isom}}\left(\underline{\left(\O_{F_{i}}/p^{n}\right)},H_{n}^{\left(i\right),D}\right)$
on $\Y{}_{r}\an$ which is representable in view of \cite[Theorem 4.3.(a), pag. 121]{Mil1}.
Call $h^{\left(i\right),n}:{\cal I{\cal G}}_{n,r}^{\left(i\right)}\to\Y{}_{r}\an$
the resulting object\foreignlanguage{english}{: it }is finite étale
and Galois with group $\left(\O_{F_{i}}/p^{n}\right)\inv$. Taking
the normalisation we have a finite morphism 
\[
\frak{h^{\left(i\right),n}}:\IG_{n,r}^{\left(i\right)}\to\Y{}_{r}.
\]
\begin{rem}
\label{rem:ramificazioneIgusa}It follows from \cite[Corollary 3.5]{AIP1}
that, if $\spf\left(R\right)\su\Y_{r}$ is such that $\Hdg$ is generated
by $\tilde{{\rm Ha}}$, then for every $0\le n\le r+k$ there exist
elements $c_{0}=1$ and $c_{n}\in\tilde{{\rm Ha}}^{-\frac{p^{n}-1}{p-1}}\O_{\IG_{n,r}}\left(\spf\left(R\right)\right)$
such that 
\[
{\rm Tr}_{\IG_{n,r}/\IG_{n-1,r}}\left(c_{n}\right)=c_{n-1}.
\]

Let $\gb_{n}$ denote the ideal $p^{n}\Hdg^{-\frac{p^{n}-1}{p-1}}\su\O_{\IG_{n}^{\left(i\right)}}$.
This makes sense in view of \cite[Lemma 4.2, pag. 16]{AIP1}. Define
\[
\IG_{n,r}=\prod_{i=1}^{d}\IG_{n,r}^{\left(i\right)}\overset{\frak{h}^{n}}{\longrightarrow}\Y_{r}.
\]
\end{rem}


\subsection{A digression on polarisations }

Here we follows \cite[Section 3]{AnGo1}. Let $A_{/S}$ be an abelian
scheme with real multiplication by $\O_{L}$ and assume that it satisfies
condition (DP), that is, the natural map
\begin{align*}
\phi_{A}:A\ot_{\O_{L}}\mathcal{M}_{A} & \to A^{\lor}\\
x\ot\gl & \mapsto\gl\left(x\right)
\end{align*}
is an isomorphism as étale sheaves over $S$. Let $t\in\mathbb{N}$
be positive integer. Locally in the étale site of $S$ we have an
$\O_{L}$-module isomorphism
\[
\eta:\frac{\O_{L}}{t\O_{L}}\to\frac{\mathcal{M}_{A}}{t\mathcal{M}_{A}}
\]
and let $\gl_{t}\in\mathcal{M}_{A}$ be any lift of $\eta\left(1\right)$.
Looking at the commutative diagram
\[
\xymatrix{A\left[t\right]\ar[d]\ar[r] & A\ot_{\O_{L}}\frac{\mathcal{M}_{A}}{t\mathcal{M}_{A}}\\
A\left[t\right]\ot_{\O_{L}}\frac{\O_{L}}{t\O_{L}}\ar[r] & A\left[t\right]\ot_{\O_{L}}\frac{\mathcal{M}_{A}}{t\mathcal{M}_{A}}\ar[u]
}
\]
we conclude that the map
\begin{align*}
A\left[t\right] & \to A\ot_{\O_{L}}\frac{\mathcal{M}_{A}}{t\mathcal{M}_{A}}\\
x & \mapsto x\ot\overline{\gl_{t}}
\end{align*}
is an isomorphism. Define $\xi_{t}$ using the diagram of isomorphisms
\[
\xymatrix{A\left[t\right]\ar[r]^{\xi_{t}}\ar[d] & A^{\lor}\left[t\right]\\
A\ot_{\O_{L}}\frac{\mathcal{M}_{A}}{t\mathcal{M}_{A}} & \left(A\ot_{\O_{L}}\mathcal{M}_{A}\right)\left[t\right]\ar[l]\ar[u]
}
\]
getting that 
\[
\gl_{t|A\left[t\right]}:A\left[t\right]\simeq A^{\lor}\left[t\right]
\]
is an isomorphism. It follows from Sylow's first Theorem that $\ker\left(\gl_{t}\right)$
has order coprime with $t$ and hence we proved
\begin{prop}
Let $A_{/S}$ be an abelian scheme with ${\rm RM}$ that satisfies
condition (DP). Then for every integer $t\text{\ensuremath{\neq}0}$
it admits a polarisation of degree prime to $t$.
\end{prop}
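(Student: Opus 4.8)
The plan is to produce, étale-locally on $S$, a single polarisation $\gl_t$ whose kernel has order prime to $t$, by reducing the whole question to the action of $\gl_t$ on the $t$-torsion subgroup. The starting observation is that $\mathcal{M}_A$ is an invertible $\O_L$-module (it is identified, through the polarisation datum, with the fractional ideal $\c$), so that étale-locally on $S$ its reduction $\mathcal{M}_A/t\mathcal{M}_A$ is free of rank one over $\O_L/t\O_L$; this is exactly the isomorphism $\eta$. I would first arrange the generator $\eta(1)$ to admit a totally positive lift $\gl_t\in\mathcal{M}_A^+$: starting from an arbitrary lift and adding a suitable element of $t\mathcal{M}_A\simeq t\c$ that is totally positive at every archimedean place, one lands in the cone $\mathcal{M}_A^+$ without changing the class modulo $t$. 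Thus $\gl_t$ is a genuine polarisation, and it remains only to bound its degree.

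The heart of the argument is to show that multiplication by $\gl_t$ is an isomorphism on $t$-torsion. Since $A[t]$ is annihilated by $t$, there is a canonical identification $A[t]=A[t]\ot_{\O_L}\O_L/t\O_L$, and transporting along $\eta$ gives $A[t]\simeq A[t]\ot_{\O_L}\mathcal{M}_A/t\mathcal{M}_A=(A\ot_{\O_L}\mathcal{M}_A)[t]$; concretely this is the statement that $x\mapsto x\ot\overline{\gl_t}$ is an isomorphism, which is read off the first commutative square. Composing with the isomorphism $\phi_A\colon A\ot_{\O_L}\mathcal{M}_A\xrightarrow{\sim}A^\lor$ furnished by condition (DP), and chasing the second diagram, one concludes that the restriction $\gl_{t\,|A[t]}\colon A[t]\to A^\lor[t]$ is an isomorphism.

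With this in hand the degree estimate is purely group-theoretic. The kernel $\ker(\gl_t)$ is a finite group scheme meeting $A[t]$ trivially, since $\gl_{t\,|A[t]}$ is injective. If some prime $\ell\mid t$ divided $\#\ker(\gl_t)$, then by Sylow's first theorem $\ker(\gl_t)$ would contain a point of order $\ell$, necessarily lying in $A[\ell]\su A[t]$, contradicting the triviality of $\ker(\gl_t)\cap A[t]$. Hence $\deg(\gl_t)=\#\ker(\gl_t)$ is coprime to $t$. Finally, as the degree of a polarisation may be checked étale-locally, this local construction of $\gl_t$ delivers the required polarisation.

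I expect the main obstacle to be the middle step, namely verifying that $\gl_t$ induces an isomorphism on $t$-torsion. This is exactly where condition (DP) is indispensable: without the isomorphism $\phi_A$ one controls $A\ot_{\O_L}\mathcal{M}_A$ but not $A^\lor$ itself, and the two commutative diagrams must be spliced through $\eta$ with careful bookkeeping of the $\O_L$-module structures. A secondary, more elementary subtlety is to ensure that the chosen lift $\gl_t$ is simultaneously totally positive and a generator modulo $t$, so that it is honestly a polarisation rather than merely a symmetric $\O_L$-linear homomorphism.
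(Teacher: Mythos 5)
Your proof follows the paper's argument essentially step for step: the same \'etale-local isomorphism $\eta$, the same lift $\gl_{t}$ of $\eta\left(1\right)$, the same two commutative diagrams showing that $\gl_{t}$ restricts to an isomorphism $A\left[t\right]\simeq A^{\lor}\left[t\right]$, and the same Sylow-type argument that $\ker\left(\gl_{t}\right)$ has order prime to $t$. The only difference is that you also verify the lift can be chosen totally positive (hence an honest polarisation, not merely a symmetric $\O_{L}$-linear isogeny), a point the paper's proof passes over in silence.
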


This fact is interesting because it allows to assume our $p$-divisible
groups are principally polarised. Indeed let $A\to S=\spec\left(R\right)$
be a $g$-dimensional abelian scheme over a $p$-adically complete
ring $R$ and let $f:A\to A^{^{\lor}}$ be an isogeny whose degree,
say $k$, is prime to $p$ with kernel $K$. Let $U\to S$ be an\emph{
}$fppf$ open over which the sequence
\[
0\to K\left(U\right)\to A\left(U\right)\overset{f_{U}}{\longrightarrow}A^{\lor}\left(U\right)\to0
\]
is an exact sequence in abelian groups (or modules over a fixed commutative
ring $\O$). For every integer $n\ge1$ note that the $p^{n}$-torsion
functor $\left(\b\right)\left[p^{n}\right]=\Hom_{\O}\left(\frac{\O}{p^{n}},\b\right)$,
then we have the associated long exact sequence
\[
0\to K\left(U\right)\left[p^{n}\right]\to A\left(U\right)\left[p^{n}\right]\overset{f_{U}}{\longrightarrow}A^{\lor}\left(U\right)\left[p^{n}\right]\to\ext_{\O}^{1}\left(\frac{\O}{p^{n}},K\left(U\right)\right),
\]
but
\[
K\left(U\right)\left[p^{n}\right]=\ext_{\O}^{1}\left(\frac{\O}{p^{n}},K\left(U\right)\right)=0
\]
since $p^{n}$ acts both as 0 and as an invertible element (being
$k$ prime to $p$). In conclusion 
\[
f_{p}:A\left[p^{\infty}\right]\to A^{\lor}\left[p^{\infty}\right]
\]
is an isomorphism if $p$-divisible $\O$-modules.
\begin{note}
\label{note:autodualit=0000E0}In our case this remark applies as
follows: let $A\to S$ be an object parametrised by ${\rm M}\left(\mu_{N},\mathfrak{c}\right)$,
in particular $\mathfrak{c}\ot_{\Z}\Z_{p}$ has rank 1 as an $\O_{L}\ot_{\Z}\Z_{p}$-module
and every element $x\in\mathfrak{c}$ such that $x\ot_{\Z}1$ is a
generator gives rise to a polarisation $A\to A^{\lor}$ whose degree
is necessarily prime to $p$. Suppose that $A$ admits a canonical
subgroup $H_{n}=H_{n}\left(A\right)$ of level $n$, then $A/H_{n}\left(A\right)$
gives a point in ${\rm M}\left(\mu_{N},p^{n}\mathfrak{c}\right)$
(cfr. \cite[pag. 88]{AnGo}) therefore we can induce a prime-to-$p$
polarisation by considering the element $p^{n}x$. Hence we fix such
an element $x\in\mathfrak{c}$ once an for all. In view of the previous
discussion we conclude that all the $p$-divisible $\O_{L}$-modules
$\frac{A}{H_{n}\left(A\right)}\left[p^{\infty}\right]$ come with
compatible isomorphisms (principal $\O_{L}$-polarisations)
\[
\frac{A}{H_{n}\left(A\right)}\left[p^{\infty}\right]\to\left(\frac{A}{H_{n}\left(A\right)}\right)^{\lor}\left[p^{\infty}\right].
\]
\end{note}

\section{The sheaves for $p$ unramified}

We introduced the formal scheme ${\rm Bl}_{\m}\frak{W}^{0}\to\spf\left(\Z_{p}\right)$
as a formal model for the adic weight space. For $\ga\in\m\backslash\m^{2}$
and an interval $I\su\Q_{\ge0}$, then the rings $B_{\ga,I}^{0}$
satisfy the conditions of \ref{subsec:Canonical-subgroups}, hence
we end up with formal schemes
\[
\IG_{n,r,I}^{\left(i\right)}\to\X_{r,I}\to\spf\left(B_{\ga,I}^{0}\right).
\]

Fix an integer $r\ge1$, an interval $I=\left[p^{k},p^{s}\right]$
for two integers $s\ge k\ge0$ and $n\le r+k$. We let $\X$ denote
$\X_{r,I}$, $\IG_{n}^{\left(i\right)}$ denote $\IG_{n,r,I}^{\left(i\right)}$
etc... We let $\A=\A_{r,I}$ be the universal semi-abelian scheme
over $\X$ and $\go_{\A}$ the corresponding sheaf of invariant differentials.
\begin{rem}
Under these assumptions the level-$n$ canonical subgroup $H_{n}\su\A\left[p^{n}\right]$
is defined and it comes with an $\O_{L}$-linear structure compatible
with that of $\A$. Since $H_{n}$ is $p^{n}$-torsion, the map $\go_{\A}\to\go_{H_{n}}$
factors through $\go_{\A}/p^{n}\go_{\A}$ and the kernel of this last
map is anihilated by $\Hdg^{\frac{p^{n}-1}{p-1}}$, in particular
we have a sequence of epimorphisms 
\[
\go_{\A}\to\go_{H_{n}}\to\frac{\go_{\A}}{\gb_{n}\uo_{\A}}
\]
as \emph{fppf} sheaves in $\O_{L}\ot_{\Z}\O_{\X}$-modules over $\X$.
\end{rem}

\begin{prop}
\label{prop:propriet=0000E0DLOGedOmega}Consider the universal morphism
\[
\dlog_{H_{n}}:H_{n}^{D}\to\go_{H_{n}}
\]
of abelian fppf sheaves on $\IG_{n}$ and let $P_{n}$ be the universal
generator of $H_{n}^{D}$. Denote $\Om_{\A}$ the sub-$\O_{\IG_{n}}\ot_{\Z}\O_{L}$-module
of $\go_{\A}$ generated by the lifts of
\[
s=\dlog_{H_{n}}\left(P_{n}\right)\in\frac{\go_{\A}}{p^{n}\Hdg^{-\frac{p^{n}-1}{p-1}}\go_{\A}},
\]
then $\Om_{\A}$ is a locally free $\O_{\IG_{n}}\ot_{\Z}\O_{L}$-module
of rank 1.
\end{prop}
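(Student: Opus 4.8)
The plan is to reduce the statement to the computation of a single ideal and to feed in the properties of the canonical subgroup recorded in Proposition \ref{prop:propriet=0000E0sottogruppocanonico} and Remark \ref{rem:ramificazioneIgusa}. First I would pass to an affine $\spf\left(R\right)\su\IG_{n}$ small enough that $\go_{\A}$ is free of rank $1$ over $R\ot_{\Z}\O_{L}$ — say with generator $\go$ — and that the Hodge ideal $\Hdg$ is principal, generated by a lift $\tilde{\Ha}$ of the Hasse invariant. Writing $v=\frac{p^{n}-1}{p-1}$, so that $\gb_{n}=p^{n}\Hdg^{-v}$ as in Remark \ref{rem:ramificazioneIgusa}, the marked section lives in $\go_{\A}/\gb_{n}\go_{\A}$, which is a free module of rank $1$ over $\left(R\ot_{\Z}\O_{L}\right)/\gb_{n}$. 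Since $\go_{\A}$, $H_{n}$ and $s$ all decompose along the product decomposition (\ref{eq:decomposizioneocmpletamentoO_L}), it suffices to treat one unramified factor $\O_{F_{i}}$ at a time.

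Next I would make $\Om_{\A}$ explicit. As $\go_{\A}\to\go_{\A}/\gb_{n}\go_{\A}$ is surjective, the lifts of $s$ form an entire coset $\tilde{s}+\gb_{n}\go_{\A}$, and differences of lifts already exhaust $\gb_{n}\go_{\A}$; hence, writing $\tilde{s}=f\go$ for one chosen lift, one obtains
\[
\Om_{\A}=J\go,\qquad J=\left(f\right)+\gb_{n}.
\]
Thus the whole question reduces to: is the ideal $J\su R\ot_{\Z}\O_{L}$ locally principal, generated by a non-zero-divisor? If so, multiplication by such a generator identifies $J\go$ with $R\ot_{\Z}\O_{L}$ locally, giving local freeness of rank $1$.

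The heart of the matter — and the step I expect to be the main obstacle — is to pin down the ideal $\left(f\right)$, that is, to control the image of $\dlog_{H_{n}}\left(P_{n}\right)$ inside $\go_{\A}/\gb_{n}\go_{\A}$ in terms of powers of $\Hdg$. Here I would use that $H_{n}^{D}$ is étale-locally $\O_{L}/p^{n}$ with $P_{n}$ a generator (Proposition \ref{prop:propriet=0000E0sottogruppocanonico}), that $\dlog$ is $\O_{L}$-linear, and the cokernel estimate for the $\dlog$/Hodge–Tate map of the canonical subgroup from \cite{AIP2}: its cokernel into $\go_{\A}/p^{n}\go_{\A}$ is annihilated by $\Hdg^{v}$, so that the image of $s$ is exactly $\Hdg^{v}\cdot\left(\go_{\A}/\gb_{n}\go_{\A}\right)$ and therefore $\left(f\right)=\Hdg^{v}$ up to a unit and elements of $\gb_{n}$. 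Getting the precise normalization of this estimate, and matching the $\mu_{p^{n}}$-type (multiplicative) behaviour of $H_{n}$ so that $\dlog\left(P_{n}\right)$ is a generator up to the stated Hasse factor, is the delicate point; everything else is bookkeeping.

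Once this is in hand the conclusion is formal. One has $J=\Hdg^{v}+\gb_{n}=\Hdg^{v}+p^{n}\Hdg^{-v}$, and on the formal models of \cite{AIP1} the ideal $\Hdg$ (hence $\Hdg^{v}$ and $\gb_{n}$) is invertible, while locally one of the two generators $\tilde{\Ha}^{v}$ and $p^{n}\tilde{\Ha}^{-v}$ divides the other; thus $J$ equals the one of smaller valuation and is locally principal, generated by a non-zero-divisor. Consequently $\Om_{\A}=J\go$ is a locally free $\O_{\IG_{n}}\ot_{\Z}\O_{L}$-module of rank $1$, and reassembling over the factors of (\ref{eq:decomposizioneocmpletamentoO_L}) and over a cover of $\IG_{n}$ yields the statement.
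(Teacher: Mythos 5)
Your overall skeleton is reasonable and, for what it is worth, more than the paper itself offers: the paper's entire proof is the citation \cite[Proposition 4.1, pag. 15]{AIP1}, so you are in effect attempting to reprove that result. The reduction to principality of $J=\left(f\right)+\gb_{n}$ and the final formal step (a locally principal ideal generated by a non-zero-divisor gives a locally free module of rank $1$) are fine. The gap sits exactly in the step you yourself flag as delicate, and it is a real error, not a normalization slip. The exponent $v=\frac{p^{n}-1}{p-1}$ controls the \emph{kernel} of the surjection $\go_{\A}/p^{n}\go_{\A}\to\go_{H_{n}}$ --- that is precisely what the Remark preceding the Proposition records --- and has nothing to do with the cokernel of $\dlog_{H_{n}}$. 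The correct input, due to Fargues and used throughout \cite{AIP1} and \cite{AIP2}, is that the cokernel of $\dlog_{H_{n}}:H_{n}^{D}\ot\O_{\IG_{n}}\to\go_{H_{n}}$ is annihilated by $\Hdg^{\frac{1}{p-1}}$, \emph{uniformly in} $n$; accordingly the ideal cut out by the marked section is not a power of $\Hdg$ at all, but the invertible ideal $\ud$ with $\ud^{p-1}=\Hdg$ and $\ud\cdot\uo_{\A}=\Om_{\A}$, whose very existence (it lives on $\IG_{1}$, not on $\X$) is part of the theorem; the paper quotes this as \cite[Proposition A.3, pag. 43]{AIP2}. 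Your assertion that the image of $s$ is exactly $\Hdg^{v}\cdot\left(\go_{\A}/\gb_{n}\go_{\A}\right)$ is false for every $n\ge2$ (and for $n=1$ when $p>2$): it would make $\Om_{\A}$ shrink as $n$ grows, against the compatibility of the marked sections along the Igusa tower, and it contradicts the identity $\ud\cdot\uo_{\A}=\Om_{\A}$ on which the rest of the paper depends (the pushout (\ref{eq:pushout-1}) and Lemma \ref{lem:descrizioneesplicitapushout}).

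There is a second, independent gap: even after replacing $\Hdg^{v}$ by the correct $\Hdg^{\frac{1}{p-1}}$, a cokernel bound yields only one inclusion, namely that the image of $s$ \emph{contains} $\Hdg^{\frac{1}{p-1}}\cdot\left(\go_{\A}/\gb_{n}\go_{\A}\right)$, i.e. a lower bound on $J$. Principality of $J$ requires the reverse inclusion as well, that is, the exact identification of the submodule generated by $\dlog_{H_{n}}\left(P_{n}\right)$, and that is a genuine computation with the degrees of the group schemes $H_{n}$, not bookkeeping. This two-sided identification, together with the construction of the $\left(p-1\right)$-st root $\ud$ of $\Hdg$ after pullback to the Igusa tower, is precisely the content of \cite[Proposition 4.1]{AIP1} and \cite[Proposition A.3]{AIP2}; without it, your argument reduces the Proposition to the very statement the paper cites rather than proving it.
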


\begin{proof}
This is \cite[Proposition 4.1, pag. 15]{AIP1}.
\end{proof}

\subsection{The Gauss-Manin connection}
\begin{note}
\label{note:connessioneintegrale}Let $X\to\spec\left(\O_{L^{{\rm Gal}}}\left[d_{L}^{-1}\right]\right)=\spec\left(A\right)$
be a scheme of finite type and let $M$ be a coherent locally free
$\O_{X}$-module with a connection $\nabla:M\to M\ot_{\O_{X}}\Omega_{X/A}^{1}$.
Consider the base-change
\[
\nabla_{\Q}:M\ot_{\Z}\Q\to\left(M\ot_{\Z}\Q\right)\ot_{\Q}\Omega_{X_{\Q}/L^{{\rm Gal}}}^{1},
\]
where we used the fact that $\Q\ot_{\Z}\Q=\Q$, and suppose that,
for every $\gamma:L\to L^{{\rm Gal}}$ we have 
\[
\nabla_{\Q}\left(\left(M\ot_{\Z}\Q\right)\left(\gamma\right)\right)\su\left(M\ot_{\Z}\Q\right)\left(\gamma\right)\ot_{\O_{X_{\Q}}}\Omega_{X_{\Q}/L^{{\rm Gal}}}^{1}.
\]
Note that we have
\begin{align*}
\left(M\ot_{\Z}\Q\right)\left(\gamma\right)\ot_{\O_{X_{\Q}}}\Omega_{X_{\Q}/L^{{\rm Gal}}}^{1} & =\left(M\left(\gamma\right)\ot_{\O_{X}}\Omega_{X/A}^{1}\right)\ot_{\Z}\Q,
\end{align*}
therefore 
\[
\nabla\left(M\left(\gamma\right)\right)\su\left(M\ot_{\O_{X}}\Omega_{X/A}^{1}\right)\cap\left[\left(M\left(\gamma\right)\ot_{\O_{X}}\Omega_{X/A}^{1}\right)\ot_{\Z}\Q\right]\su M\left(\gamma\right)\ot_{\O_{X}}\Omega_{X/A}^{1}.
\]
\end{note}

\begin{lem}
\label{lem:GaussManinsuL}Let $\mathbf{M}\to\spec\left(\O_{L^{{\rm Gal}}}\left[d_{L}^{-1}\right]\right)$
be the base change of a smooth toroidal compactification of $M\left(\mu_{N},\c\right)$
\[
\nabla:\h=\h\left(\mathbf{A}/\mathbf{M}\right)\to\h\ot_{\O_{\mathbf{M}}}\Omega_{\mathbf{M}/\O_{L^{{\rm Gal}}}\left[d_{L}^{-1}\right]}^{1}
\]
be the Gauss-Manin connection. Then for every $\gamma:L\to L^{{\rm Gal}}$
we have
\[
\nabla\left(\h\left(\gamma\right)\right)\su\h\left(\gamma\right)\ot_{\O_{\mathbf{M}}}\Omega_{\mathbf{M}/\O_{L^{{\rm Gal}}}\left[d_{L}^{-1}\right]}^{1}.
\]
\end{lem}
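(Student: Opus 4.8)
The plan is to reduce the integral inclusion to a statement over the generic fibre by means of Note \ref{note:connessioneintegrale}, and then to establish that statement using the functoriality of the Gauss--Manin connection with respect to the real multiplication endomorphisms. Concretely, I would apply Note \ref{note:connessioneintegrale} with $X=\mathbf{M}$, $A=\O_{L^{{\rm Gal}}}[d_L^{-1}]$ and $M=\h$. By that note it is enough to prove, for the base-changed connection $\nabla_\Q$ on $\h\ot_\Z\Q$ over $\mathbf{M}_\Q\to\spec(L^{{\rm Gal}})$, the rational inclusion $\nabla_\Q\bigl((\h\ot_\Z\Q)(\gamma)\bigr)\su(\h\ot_\Z\Q)(\gamma)\ot\Omega^1$ for every embedding $\gamma$. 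Over the generic fibre the acting ring is $\O_L\ot_\Z\Q=L$, and since $L^{{\rm Gal}}$ is a Galois closure of $L$ one has $L\ot_\Q L^{{\rm Gal}}\simeq\prod_\gamma L^{{\rm Gal}}$; the idempotents of this product produce exactly the eigenspace decomposition $\h\ot_\Z\Q=\bigoplus_\gamma(\h\ot_\Z\Q)(\gamma)$, where $x\in\O_L$ acts on the $\gamma$-component as the scalar $\gamma(x)\in L^{{\rm Gal}}$.

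The crucial input is then that the $\O_L$-action on $\h$ is \emph{horizontal} for $\nabla$. This action is induced by the ring embedding $\iota\colon\O_L\to{\rm End}_{\mathbf{M}}(\A)$, and because these endomorphisms are defined over the base $\mathbf{M}$ (constant in the base direction), the functoriality of the Gauss--Manin connection gives $\nabla\circ\iota(x)\ua=(\iota(x)\ua\ot\id)\circ\nabla$ for every $x\in\O_L$. Granting this, for a local section $v\in(\h\ot_\Z\Q)(\gamma)$ one computes, using horizontality together with the fact that $\gamma(x)\in L^{{\rm Gal}}$ is constant (so $d\gamma(x)=0$ and Leibniz contributes no extra term),
\[
(x\ot1)\cdot\nabla_\Q(v)=\nabla_\Q(x\cdot v)=\nabla_\Q\bigl(\gamma(x)\,v\bigr)=\gamma(x)\,\nabla_\Q(v).
\]
Hence $\nabla_\Q(v)$ lies in the $\gamma$-eigenspace of the first factor, i.e.\ in $(\h\ot_\Z\Q)(\gamma)\ot\Omega^1$. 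This is precisely the rational inclusion, and Note \ref{note:connessioneintegrale} upgrades it to the asserted integral inclusion.

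I expect the main obstacle to be the horizontality of the $\O_L$-action, that is, the functoriality of Gauss--Manin for the real multiplication. Over the open (non-cuspidal) part of the moduli space this is the classical fact that a base-constant endomorphism of an abelian scheme induces a horizontal map on relative de Rham cohomology. The delicate point is that $\mathbf{M}$ is a toroidal compactification and $\A$ is only semi-abelian, so the connection acquires log poles along the boundary divisor and $\h$ must be understood as the canonical (Deligne) extension. One must therefore check that the endomorphisms extend over the boundary and remain horizontal for the log connection. I would handle this either by transporting the functoriality statement to the log-de Rham setting, or---since $\mathbf{M}$ is smooth and integral and $\h$ is locally free, so that $\h(\gamma)$ is a direct summand---by verifying the inclusion on the dense open locus where $\A$ is abelian and then extending it over all of $\mathbf{M}$ by normality and density.
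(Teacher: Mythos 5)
Your proposal is correct, but it proves the generic--fibre statement by a genuinely different route than the paper. Both arguments share the same reduction step: you and the paper both invoke Note \ref{note:connessioneintegrale} to upgrade the rational inclusion to the integral one. For the rational statement, however, the paper follows Katz's transcendental argument: it embeds $L^{{\rm Gal}}$ into $\C$, uses the complex-analytic uniformisation of $\mathbf{M}_{\C}$ by $\frak{h}\left(L\right)=\frak{h}^{\Sigma}$, picks a horizontal frame $\left\{ X_{\gamma},Y_{\gamma}\right\} $ of each eigenspace $\h\left(\gamma\right)$, and writes $\nabla=\sum_{\s}\tilde{D}_{\s}$ explicitly through the Kodaira--Spencer isomorphism, so that eigenspace preservation is read off from the fact that each $\tilde{D}_{\s}$ only differentiates the coefficient functions. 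Your argument is purely algebraic: horizontality of the $\O_{L}$-action (functoriality of Gauss--Manin for endomorphisms of $\A$ defined over the base), combined with the idempotent decomposition of $L\ot_{\Q}L^{{\rm Gal}}$, immediately forces $\nabla_{\Q}$ to preserve each $\gamma$-eigenspace, since $\gamma\left(x\right)$ is a constant of the base ring and contributes no Leibniz term. You also correctly isolate the one real subtlety --- that $\A$ is only semi-abelian over the toroidal compactification --- and your density fix is sound: the composite $\h\left(\gamma\right)\overset{\nabla}{\to}\h\ot\Omega^{1}\to\left(\h/\h\left(\gamma\right)\right)\ot\Omega^{1}$ is $\O_{\mathbf{M}}$-linear (it is a second fundamental form), vanishes on the dense open abelian locus, and has torsion-free target, hence vanishes identically. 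What each approach buys: yours is intrinsic, avoids any appeal to complex analysis, and in fact works integrally once $d_{L}$ is inverted (making the detour through $\Q$ optional); the paper's computation, besides proving the lemma, sets up the explicit Kodaira--Spencer description ($\theta_{\s}$, the operators $\tilde{D}_{\s}$) that reappears in the later local computations with $\nabla^{\#}$, so it earns its extra machinery.
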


\begin{proof}
We follow \cite[Lemma 2.1.14, pag 229]{Kat}. Let $\Sigma={\rm Hom}_{\Q}\left(L,L^{{\rm Gal}}\right)$.
Embed $L^{{\rm Gal}}\su\C$ and consider the base change $\mathbf{M}_{\mathbb{C}}\to\spec\left(\C\right)$,
whose associated analytic variety has uniformisation by $\frak{h}\left(L\right)=\frak{h}^{\Sigma}$,
where $\frak{h}$ is the usual complex half plane (\cite[Section 1.4, pag. 213]{Kat}).
Over $\frak{h}\left(L\right)$ we have a horizontal isomorphism between
$\h$ and a constant sheaf 
\[
V\ot_{\C}\O_{\frak{h}\left(L\right)}\simeq\h
\]
with the trivial connection ${\rm id}_{V}\ot_{\C}d$. For every $\gamma\in\Sigma$
let $\left\{ X_{\gamma},Y_{\gamma}\right\} $ be a basis of horizontal
sections of $\h\left(\gamma\right)$. Then Kodaira-Spencer isomorphism
reads
\begin{align*}
\Omega_{\frak{h}\left(L\right)/\C}^{1} & \to\uo^{\ot2}=\bigoplus_{\s}\left(X_{\s}-\tau_{\s}Y_{\s}\right)^{2}\cdot\O_{\frak{h}\left(L\right)}\\
2\pi i\,d\tau_{\s} & \mapsto\left(X_{\s}-\tau_{\s}Y_{\s}\right)^{2}
\end{align*}
Let $\tilde{D}_{\s}:\h\to\h\ot\omega\left(\s^{2}\right)$ as in \cite[Section 2.1]{Kat},
then
\[
\tilde{D}_{\s}=\frac{1}{2\pi i}\left(X_{\s}-\tau_{\s}Y_{\s}\right)^{2}\frac{\partial}{\partial\tau_{\s}},
\]
hence if $\xi=fX_{\gamma}+gY_{\gamma}\in\h\left(\gamma\right)$ we
have
\[
\tilde{D}_{\s}\left(\xi\right)=\frac{1}{2\pi i}\left(\frac{\partial f}{\partial\tau_{\s}}X_{\gamma}+\frac{\partial g}{\partial\tau_{\s}}Y_{\gamma}\right)\otimes_{\C}\left(X_{\s}-\tau_{\s}Y_{\s}\right)^{2}\in\h\left(\gamma\right)\ot\omega\left(\s^{2}\right).
\]
Since have $\nabla=\sum_{\s}\tilde{D}_{\s}$ we conclude that the
statements holds on $\mathbf{M}_{L}$ and in view of Note \ref{note:connessioneintegrale}
that it holds over $\mathbf{M}$.
\end{proof}
Back to our setting we conclude that
\begin{prop}
\label{prop:gaussmaninrispettaO_F}Let $\nabla:\h\to\h\widehat{\ot}_{\O_{\X}}\Omega_{\X/S}^{1}$
be the Gauss-Manin connection. Then
\begin{enumerate}
\item $\nabla$ restricts to an integrable connection
\[
\nabla^{\left(i\right)}:H_{{\rm dR}}^{1,\left(i\right)}\to H_{{\rm dR}}^{1,\left(i\right)}\widehat{\otimes}_{\O_{\X}}\Omega_{\X/S}^{1}
\]
for every $i$;
\item the connection $\nabla^{\left(i\right)}$ respects the respects the
$\O_{F_{i}}$-structure.
\end{enumerate}
\end{prop}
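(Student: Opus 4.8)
The plan is to deduce both assertions from the $\O_L$-equivariance of the Gauss--Manin connection, reducing the refined statement to the characteristic-zero computation of Lemma \ref{lem:GaussManinsuL}. Recall that $\h=\h(\A/\X)$ is an $\O_L\ot_\Z\O_\X$-module via $\iota:\O_L\to{\rm End}_\X(\A)$, that the decomposition $\O_L\ot_\Z\Zp=\prod_i\O_{F_i}$ of (\ref{eq:decomposizioneocmpletamentoO_L}) yields $\h=\bigoplus_i H_{{\rm dR}}^{1,(i)}$, and that each factor refines further, over a base containing $\O_{L^{{\rm Gal}}}$ as in the introduction, into the $\s$-isotypic pieces $H_{{\rm dR}}^{1,(i)}(\s)$ for $\s$ ranging over the set $\gS_i$ of embeddings of $F_i$. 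The key geometric input is functoriality of Gauss--Manin: each $a\in\O_L$ acts by an endomorphism $a:\A\to\A$ over $\X$ whose pullback on $\h$ is horizontal, so $\nabla$ commutes with the entire $\O_L\ot_\Z\O_\X$-action.

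For the first assertion, the idempotents $e_i\in\O_L\ot_\Z\Zp$ cutting out the factors $\O_{F_i}$ lie in the constant coefficient subring $\O_L\ot_\Z\Zp\su\O_L\ot_\Z\O_\X$, hence satisfy $d(e_i)=0$ and are horizontal. Since $\nabla$ commutes with the $\O_L\ot_\Z\O_\X$-action, it commutes with each $e_i$ and therefore preserves $H_{{\rm dR}}^{1,(i)}=e_i\h$. Restricting $\nabla$ to this horizontal direct summand gives a connection $\nabla^{(i)}$, integrable because its curvature is the restriction of the (vanishing) curvature of $\nabla$. This is assertion (1).

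Assertion (2) is the substance and is where Lemma \ref{lem:GaussManinsuL} enters. Fix an embedding $L^{{\rm Gal}}\hookrightarrow\overline{\Qp}$, equivalently a prime $\frak{P}\mid p$ of $L^{{\rm Gal}}$; since $p$ is unramified in $L$ it is unramified in $L^{{\rm Gal}}$, so $L^{{\rm Gal}}_\frak{P}$ is an unramified extension of $\Qp$ containing every $F_i$. Under $L\ot_\Q L^{{\rm Gal}}_\frak{P}=\prod_\gamma L^{{\rm Gal}}_\frak{P}=\prod_i\prod_{\s\in\gS_i}L^{{\rm Gal}}_\frak{P}$, the algebraic embeddings $\gamma:L\to L^{{\rm Gal}}$ whose $p$-adic completion factors through $F_i=L_{\p_i}$ biject with the embeddings $\s:F_i\to L^{{\rm Gal}}_\frak{P}$; thus, after this base change, the $\gamma$-decomposition of the Lemma refines exactly to the $(i,\s)$-decomposition, and $H_{{\rm dR}}^{1,(i)}(\s)$ is obtained from $\h(\gamma)$ by base change. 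The Lemma gives $\nabla(\h(\gamma))\su\h(\gamma)\ot_{\O_\mathbf{M}}\Omega_{\mathbf{M}/\O_{L^{{\rm Gal}}}[d_L^{-1}]}^{1}$ over $\mathbf{M}$; since Gauss--Manin is compatible with the (flat) base change to the $p$-adic setting, with completion to $\frak{M}$, and with the pullback to the overconvergent model $\X_{r,I}$ — where the universal object is pulled back from $\frak{M}$, so that no contribution arises along any weight-space directions present in $\Omega^1_{\X/S}$ — this containment transports to
\[
\nabla^{(i)}\big(H_{{\rm dR}}^{1,(i)}(\s)\big)\su H_{{\rm dR}}^{1,(i)}(\s)\widehat{\ot}_{\O_\X}\Omega^1_{\X/S},
\]
which is precisely the statement that $\nabla^{(i)}$ respects the $\O_{F_i}$-structure.

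I expect the main obstacle to be the compatibility of the Gauss--Manin connection through these successive passages — flat base change, $p$-adic completion, and pullback along the overconvergent/blow-up construction defining $\X_{r,I}$ — rather than the index bookkeeping relating the $\gamma$ and the $(i,\s)$. Concretely, one must verify that the connection $\nabla$ named in the statement really is the completion and pullback of the algebraic Gauss--Manin of $\mathbf{A}/\mathbf{M}$ and that flatness of the relevant base changes preserves the submodule inclusions; granting this, the $\O_L$-equivariance and the matching of the two decompositions supply the rest.
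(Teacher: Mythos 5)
Your proposal is correct in substance, and it is a hybrid of the paper's route and a more elementary one. For assertion (2) you do essentially what the paper does: the paper states Proposition \ref{prop:gaussmaninrispettaO_F} as an immediate consequence (``Back to our setting we conclude that\dots'') of Lemma \ref{lem:GaussManinsuL}, whose proof is Katz's complex-analytic computation (uniformisation by $\frak{h}\left(L\right)$, horizontal bases, Kodaira--Spencer, $\nabla=\sum_{\s}\tilde{D}_{\s}$) combined with the integrality descent of Note \ref{note:connessioneintegrale}; one then transports the inclusion $\nabla\left(\h\left(\gamma\right)\right)\su\h\left(\gamma\right)\ot_{\O_{\mathbf{M}}}\Omega_{\mathbf{M}/\O_{L^{{\rm Gal}}}\left[d_{L}^{-1}\right]}^{1}$ to the formal setting, matching the embeddings $\gamma$ with the pairs $\left(i,\s\right)$ exactly as you spell out (the paper leaves this bookkeeping, and the base-change compatibilities you rightly flag, implicit). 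Where you genuinely differ is assertion (1): the paper deduces it too from the Lemma, whereas you prove it purely algebraically --- the $\O_{L}$-action on $\h$ comes from endomorphisms of $\A/\X$, hence is horizontal by functoriality of Gauss--Manin, and the idempotents $e_{i}\in\O_{L}\ot_{\Z}\Zp$ have constant coefficients, so $\nabla$ preserves $e_{i}\h$. This buys independence from any transcendental input for (1); in fact your argument would extend to (2) as well: since $F_{i}/\Qp$ is unramified one has $\Omega_{\O_{F_{i}}/\Zp}^{1}=0$, so once the base is enlarged so that $\O_{\X}$ is an $\O_{F_{i}}$-algebra (which is needed anyway for the $\s$-decomposition to exist), the image of $\O_{F_{i}}$ in $\O_{\X}$ is killed by $d$, the finer idempotents $e_{\s}\in\O_{F_{i}}\ot_{\Zp}\O_{F_{i}}$ are horizontal, and the whole proposition follows with no appeal to Lemma \ref{lem:GaussManinsuL} at all; the paper's explicit computation is of course still needed elsewhere (Kodaira--Spencer reappears in the proof of Proposition \ref{prop:esisteconnessionesuW}). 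One phrase you should repair: ``$\nabla$ commutes with the entire $\O_{L}\ot_{\Z}\O_{\X}$-action'' is false as written, since a connection never commutes with multiplication by non-constant functions; what you actually use, and what is true, is that $\nabla$ is $\O_{L}$-equivariant and satisfies Leibniz over $\O_{\X}$, so that it commutes with the constant-coefficient idempotents $e_{i}$.
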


\subsection{Descending to $\protect\IG_{n}$}

Define the formal group $\frak{T}=1+\gb_{n}\cdot\Res_{\O_{L}/\Z}\left(\G_{a}\right)$,
hence over $\IG_{n}^{\left(i\right)}$ the universal character gives
a morphism $\kappa^{\left(i\right)}:\frak{T}^{\left(i\right)}=1+\gb_{n}\Res_{\O_{F_{i}}|\Z_{p}}\left(\G_{a}\right)\to\G_{m|\IG_{n}^{\left(i\right)}}$
for every $i$. 
\begin{prop}
\label{prop:discesaIGngenerale}Let $\left(\E,s\right)$ be an ${\rm MS}_{\O_{Fi}}$-datum
of rank $m$ on $\IG_{n}^{\left(i\right)}$ and denote with $\pi:\V_{0}\left(\E,s\right)\to\IG_{n}^{\left(i\right)}$
the formal $\O_{F_{i}}$-module bundle with marked sections associated
to it, then
\begin{enumerate}
\item the map $\pi$ has a natural action of $\frak{T}^{\left(i\right)}$;
\item let $\spf\left(R\right)\su\IG_{n}^{\left(i\right)}$ be an open subset
over which $\E$ and $\gb_{n}$ are free, then
\[
\pi{}_{\ast}\O_{\V_{0,\s}\left(\E,s\right)}\left[\kappa{}^{\left(i\right)}\right]_{|\spf\left(R\right)}=\prod_{u\in S}R\left\langle V_{2,\s,u},\dots,V_{m,\s,u}\right\rangle \cdot\kappa{}^{\left(i\right)}\left(\s\left(u\right)+\gb_{n}Z_{\s,u}\right)
\]
where $V_{k,\s,u}=\left(\s\left(u\right)+\gb_{n}Z_{\s,u}\right)^{-1}X_{k,\s,u}$.
\end{enumerate}
\end{prop}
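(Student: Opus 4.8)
The plan is to unwind the explicit local descriptions already established for $\V_0(\E,s)$ and to track what the action of $\frak{T}^{(i)}$ does to the generators. For part (1), I would first recall from Theorem \ref{thm:sezioni marcate} that $\V_{\O_{F_i}}(\E)$ represents $\Hom_{\O_{\frak{Y}}}(t^\ast\E,\O_{\frak{Y}})$, and that the marked-section condition cuts out $\V_0(\E,s)$ as the locus where the image of $s$ is a unit congruent to a fixed lift $u$. An element $t\in\frak{T}^{(i)}=1+\gb_n\Res_{\O_{F_i}/\Z_p}(\G_a)$ acts on a homomorphism $f$ by postcomposition, i.e.\ by scaling the value $f(s)$ through the $\O_{F_i}$-module structure; the point is that multiplying by an element of $1+\gb_n\cdot(\,\cdot\,)$ preserves the condition $f(s)\equiv u \bmod p^n\Hdg^{-\frac{p^n-1}{p-1}}$, since $\gb_n=p^n\Hdg^{-\frac{p^n-1}{p-1}}$ is precisely the relevant ideal. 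So the functorial action is well-defined and stabilises each component $\V_0^u(\E,s)$, giving the claimed $\frak{T}^{(i)}$-action on $\pi$.

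For part (2), I would work over $\spf(R)\su\IG_n^{(i)}$ where $\E$ is free with basis $\{e_{1,\s},\dots,e_{m,\s}\}$ and $\overline{e_{1,\s}}=s_\s$, so that the local description after the marked-section blow-up reads
\[
\frak{f}_\ast\O_{\V_0(\E,s)}(\spf(R))=\prod_{u\in S}R\left\langle Z_{\s,u},X_{2,\s,u},\dots,X_{m,\s,u}\;\middle|\;\s\in\sS\right\rangle,
\]
where the first coordinate has been rescaled via $X_{1,\s}\mapsto p^nZ_{\s,u}+\s(u)$; replacing $p^n$ by the generator of $\gb_n$ this is $\s(u)+\gb_n Z_{\s,u}$. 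The $\frak{T}^{(i)}$-action scales $X_{1,\s}$, hence scales $\s(u)+\gb_n Z_{\s,u}$ by the image of $\kappa^{(i)}$, and fixes the other $X_{k,\s,u}$. The $\kappa^{(i)}$-eigenspace $\pi_\ast\O_{\V_{0,\s}(\E,s)}[\kappa^{(i)}]$ is then spanned by the monomials transforming under $\kappa^{(i)}$, and setting $V_{k,\s,u}=(\s(u)+\gb_n Z_{\s,u})^{-1}X_{k,\s,u}$ — which is $\frak{T}^{(i)}$-invariant because numerator and denominator scale identically — produces exactly the generators in the statement, with the single factor $\kappa^{(i)}(\s(u)+\gb_n Z_{\s,u})$ carrying the weight.

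I expect the main obstacle to be bookkeeping rather than conceptual: one must verify that $V_{k,\s,u}$ is genuinely a section of the completed structure sheaf, i.e.\ that inverting $\s(u)+\gb_n Z_{\s,u}$ is legitimate, which uses that $\s(u)$ is a unit (it lifts an element of $(\O_{F_i}/p^n)\inv$) and that $\gb_n Z_{\s,u}$ is topologically nilpotent for the $\gb_n$-adic topology, so the geometric series converges. The second delicate point is checking that the identification of the $\kappa^{(i)}$-isotypic part is exhaustive — that every element on which $\frak{T}^{(i)}$ acts through $\kappa^{(i)}$ is an $R\langle V_{2,\s,u},\dots,V_{m,\s,u}\rangle$-combination of the displayed generator — which follows by decomposing an arbitrary section into $\frak{T}^{(i)}$-eigencomponents and using the analyticity of the universal character from Proposition \ref{prop:analiticitadecarattereuniversale} together with Note \ref{note:esisteelementou_I} to make sense of $\kappa^{(i)}$ evaluated on $\s(u)+\gb_n Z_{\s,u}$.
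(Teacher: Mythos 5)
Your route is, in substance, the same as the paper's: the paper's proof simply cites \cite[Lemma 3.13]{AI} for each fixed pair $\left(\s,u\right)$ and then takes the product over $u\in S$, and what you propose is to write out that lemma's computation directly. The problem is that your description of the $\frak{T}^{\left(i\right)}$-action, which is the crux of the computation, is internally inconsistent. You first assert that the action ``scales $X_{1,\s}$ \dots\ and fixes the other $X_{k,\s,u}$'', and then that $V_{k,\s,u}=\left(\s\left(u\right)+\gb_{n}Z_{\s,u}\right)^{-1}X_{k,\s,u}$ is $\frak{T}^{\left(i\right)}$-invariant ``because numerator and denominator scale identically''. Both cannot hold, and it is the first claim that is wrong: an element $t\in\frak{T}^{\left(i\right)}$ acts on a point $f\in\V_{0}\left(\E,s\right)\left(\frak{Y}\right)$ through the $\O_{F_{i}}\ot_{\Z_{p}}\O_{\frak{Y}}$-module structure (precomposition $f\mapsto f\left(t\,\cdot\right)$, equivalently scaling of values), so \emph{every} coordinate $X_{k,\s}=f\left(e_{k,\s}\right)$ is multiplied by $\s\left(t\right)$, since $te_{k,\s}=\s\left(t\right)e_{k,\s}$ on $\E\left(\s\right)$. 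Indeed an action scaling only the first coordinate would depend on a choice of complement to the line $\FF$ of lifts of $s$, hence would not be natural for a general ${\rm MS}_{\O_{F_{i}}}$-datum.

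This slip matters for the statement itself: if the action fixed $X_{k,\s,u}$ for $k\ge2$, the $\kappa^{\left(i\right)}$-isotypic part would come out as $\prod_{u\in S}R\left\langle X_{2,\s,u},\dots,X_{m,\s,u}\right\rangle \cdot\kappa^{\left(i\right)}\left(\s\left(u\right)+\gb_{n}Z_{\s,u}\right)$, which is a different submodule from the asserted $\prod_{u\in S}R\left\langle V_{2,\s,u},\dots,V_{m,\s,u}\right\rangle \cdot\kappa^{\left(i\right)}\left(\s\left(u\right)+\gb_{n}Z_{\s,u}\right)$; for instance $V_{2,\s,u}\,\kappa^{\left(i\right)}\left(\s\left(u\right)+\gb_{n}Z_{\s,u}\right)$ lies in the latter but not in the former. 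With the action stated correctly, the rest of your argument is sound and matches what the cited lemma does: each $V_{k,\s,u}$ is genuinely invariant, $\kappa^{\left(i\right)}\left(\s\left(u\right)+\gb_{n}Z_{\s,u}\right)$ transforms by $\kappa^{\left(i\right)}$ and makes sense by Proposition \ref{prop:analiticitadecarattereuniversale} and Note \ref{note:esisteelementou_I}, invariance of each component $\V_{0}^{u}\left(\E,s\right)$ in part (1) follows from $t\cdot f\left(s\right)\equiv f\left(s\right)\bmod\gb_{n}$, and exhaustiveness follows by eigendecomposition of an arbitrary section. So the repair is local --- replace ``fixes the other $X_{k,\s,u}$'' by ``scales all $X_{k,\s,u}$ by $\s\left(t\right)$'' --- but as written the proposal argues from an action for which the proposition's conclusion is false.
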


\begin{proof}
For fixed $\s$ and $u$ the proof of \cite[Lemma 3.13]{AI} shows
that 
\[
\pi{}_{\ast}\O_{\V_{0,\s}^{u}\left(\E,s\right)}\left[\kappa{}^{\left(i\right)}\right]_{|\spf\left(R\right)}=R\left\langle V_{2,\s,u},\dots,V_{m,\s,u}\right\rangle \cdot\kappa{}^{\left(i\right)}\left(\s\left(u\right)+\gb_{n}Z_{\s,u}\right).
\]
We conclude since 
\[
\pi{}_{\ast}\O_{\V_{0,\s}\left(\E,s\right)}\left[\kappa{}^{\left(i\right)}\right]=\prod_{u\in S}\pi{}_{\ast}\O_{\V_{0,\s}^{u}\left(\E,s\right)}\left[\kappa{}^{\left(i\right)}\right].
\]
\end{proof}
Note that, since $\Om_{\A}$ is an invertible $\O_{L}\ot_{\Z}\O_{\IG_{n}}$-module,
$\Omega_{\A}^{\left(i\right)}$ is an invertible $\O_{F_{i}}\ot_{\Z_{p}}\O_{\IG_{n}^{\left(i\right)}}$-module.
\begin{defn}
Define $\varpi^{\kappa^{\left(i\right)}}=\varpi_{n,r,I}^{\kappa^{\left(i\right)}}=\left(\pi_{\ast}\O_{\V_{0}\left(\Omega_{\A}^{\left(i\right)},{\bf s}\right)}\right)\left[\kappa^{\left(i\right)}\right]$,
that is the subsheaf of $\pi_{\ast}\O_{\V_{0}\left(\Omega_{\A}^{\left(i\right)},{\bf s}\right)}$
consisting of sections transforming according to the character $\kappa^{\left(i\right)}$
under the action of $\frak{T}^{\left(i\right)}$.
\end{defn}

\begin{rem}
It follows from Proposition \ref{prop:discesaIGngenerale} that $\varpi^{\kappa^{\left(i\right)}}$
is a locally free $\O_{\IG_{n}^{\left(i\right)}}$-module of rank
$\#S$.

What we did here for the character $\kappa$ holds verbatim when $\kappa$
is replaced by any locally analytic character $\chi:\frak{T}\to\G_{m}$.
\end{rem}

\begin{cor}
\label{cor:identificazionepesointero}With setting and notations as
in Proposition \ref{prop:discesaIGngenerale} , let $\E$ be an invertible
$\O_{F_{i}}\ot_{\Z_{p}}\O_{\IG_{n}^{\left(i\right)}}$-module. Then
for every integer $k$ we have
\[
\pi{}_{\ast}\O_{\V_{0}\left(\E,s\right)}\left[k\right]\simeq\prod_{u\in S}\E^{\ot k}.
\]
\end{cor}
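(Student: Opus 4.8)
The corollary asserts that for an invertible $\O_{F_{i}}\ot_{\Z_{p}}\O_{\IG_{n}^{\left(i\right)}}$-module $\E$ equipped with its marked section $s$, the weight-$k$ eigenspace $\pi_{\ast}\O_{\V_{0}\left(\E,s\right)}\left[k\right]$ (for an integer $k$) is isomorphic to $\prod_{u\in S}\E^{\ot k}$. The strategy is to deduce this directly from the general description of the weight-$\chi$ part furnished by Proposition \ref{prop:discesaIGngenerale}, specialised to the integer character $\chi=k$ and to the case $m=1$.

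**Plan.** First I would note that since $\E$ is invertible of rank one, in the notation of Proposition \ref{prop:discesaIGngenerale} we have $m=1$, so there are no auxiliary variables $V_{2,\s,u},\dots,V_{m,\s,u}$ at all. Working locally over $\spf\left(R\right)\su\IG_{n}^{\left(i\right)}$ where both $\E$ and $\gb_{n}$ are free, Proposition \ref{prop:discesaIGngenerale} gives for each $u\in S$
\[
\pi{}_{\ast}\O_{\V_{0,\s}^{u}\left(\E,s\right)}\left[\kappa^{\left(i\right)}\right]_{|\spf\left(R\right)}=R\cdot\kappa^{\left(i\right)}\left(\s\left(u\right)+\gb_{n}Z_{\s,u}\right),
\]
a free rank-one $R$-module generated by the single element $\kappa^{\left(i\right)}\left(\s\left(u\right)+\gb_{n}Z_{\s,u}\right)$. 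The identical computation applies with $\kappa^{\left(i\right)}$ replaced by the integer character $k$, as remarked immediately after the definition of $\varpi^{\kappa^{\left(i\right)}}$ (the argument holds verbatim for any locally analytic character $\chi:\frak{T}\to\G_{m}$, and $t\mapsto t^{k}$ is such a character). Thus the $k$-eigenspace is locally generated over each $u$ by $\left(\s\left(u\right)+\gb_{n}Z_{\s,u}\right)^{k}$.

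**Identification with $\E^{\ot k}$.** The next step is to match this generator with $\E^{\ot k}$. Here I would use the concrete description of $\V_{0}\left(\E,s\right)$ from Theorem \ref{thm:sezioni marcate} and Note \ref{note:filtrazionesulfibrato}: writing $e_{1,\s}$ for the basis vector of $\E\left(\s\right)$ with $\overline{e_{1,\s}}=s_{\s}$, the coordinate $X_{1,\s}$ is dual to $e_{1,\s}$, and on the open piece $\V_{0}^{u}$ it is expressed as $X_{1,\s}=p^{n}Z_{\s,u}+\s\left(u\right)$ (equivalently $\s\left(u\right)+\gb_{n}Z_{\s,u}$ up to the normalisation fixed in Remark \ref{rem:ramificazioneIgusa}). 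The monomial of degree $k$ in this single coordinate is precisely a local generator of the $k$-th symmetric power $\mathrm{Sym}^{k}_{R}\E_{|R}\left(\s\right)=\E^{\ot k}$, since $\E$ is invertible. So the eigensection $\left(\s\left(u\right)+\gb_{n}Z_{\s,u}\right)^{k}$ corresponds, under $\xi^{\#}$, to the local generator of $\E^{\ot k}$ on the component indexed by $u$.

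**Globalisation and the main obstacle.** Finally I would assemble the local pieces: since $\V_{0}\left(\E,s\right)$ is the disjoint union over $u\in S$ of the schemes $\V_{0}^{u}\left(\E,s\right)$, one has $\pi_{\ast}\O_{\V_{0,\s}\left(\E,s\right)}\left[k\right]=\prod_{u\in S}\pi_{\ast}\O_{\V_{0,\s}^{u}\left(\E,s\right)}\left[k\right]$, reproducing the factor $\prod_{u\in S}$ in the statement, and the $R$-linear isomorphisms patch because passing to another basis of $\E\left(\s\right)$ does not change the module $R\left[X_{1,\s}\right]_{h}$ (Note \ref{note:filtrazionesulfibrato}). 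The main point requiring care is the glueing of the local trivialisations into a genuinely $\O_{\IG_{n}^{\left(i\right)}}$-linear, basis-independent isomorphism with $\E^{\ot k}$: one must check that the transition functions arising from changing the local basis $e_{1,\s}$ of $\E$ act on the generator $\left(\s\left(u\right)+\gb_{n}Z_{\s,u}\right)^{k}$ exactly by the $k$-th power cocycle defining $\E^{\ot k}$, so that the assignment is independent of the chosen marked-section basis and descends to a global isomorphism of invertible sheaves. This compatibility is essentially a bookkeeping verification rather than a substantive difficulty, but it is the step I would write out most carefully.
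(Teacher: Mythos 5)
Your proof is correct and matches the paper's intent: the paper states this corollary without proof, treating it as an immediate specialisation of Proposition \ref{prop:discesaIGngenerale} (via the preceding remark that the eigenspace description holds verbatim for any locally analytic character) to an invertible module, i.e. $m=1$, and the integer character $t\mapsto t^{k}$. Your identification of the local generator $\left(\s\left(u\right)+\gb_{n}Z_{\s,u}\right)^{k}$ with the generator of the $k$-th tensor power, together with the check that changing the basis of $\E$ acts through the $k$-th power cocycle so the local isomorphisms glue, is exactly the verification the paper leaves implicit.
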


\begin{note}
It is shown in \cite[Proposition A.3, pag. 43]{AIP2} that there exists
an invertible ideal $\ud\su\O_{\IG_{1}}$ with the property that $\ud^{p-1}=\Hdg$
and $\ud\cdot\uo_{\A}=\Omega_{\A}$. 
\end{note}

Consider now the Hodge filtration on $\H_{\dR}^{1}\left(\A/\IG_{n}\right)$,
this is given by the exat sequence
\[
\HH_{\A}^{\b}\quad=\quad0\to\uo_{\A}\to\H_{\dR}^{1}\left(\A/\IG_{n}\right)\to\uo_{\A^{D}}^{\lor}\to0
\]
and $\H_{\dR}^{1}\left(\A/\IG_{n}\right)$ comes with the Gauss-Manin
connection. We let $H_{\A}^{\#}$ be the pushout of the diagram 
\begin{equation}
\xymatrix{\ud^{p}\cdot\uo_{\A}\ar[r]\ar[d] & \ud^{p}\cdot\H_{\dR}^{1}\left(\A/\IG_{n}\right)\ar@{.>}[d]\\
\Omega_{\A}\ar@{.>}[r] & H_{\A}^{\#}
}
\label{eq:pushout-1}
\end{equation}
The following Lemma is a consequence of the formal properties of pushouts
\begin{lem}
\label{lem:descrizioneesplicitapushout}$\,$
\begin{enumerate}
\item We have $\ud^{p}\cdot\H_{\dR}^{1}\left(\A/\IG_{n}\right)\cap\Omega_{\A}=\ud^{p}\cdot\uo_{\A}$as
subsheaves of $\H_{\dR}^{1}\left(\A/\IG_{n}\right)$, so in particular
\[
H_{\A}^{\#}=\underline{\delta}^{p}\cdot\H_{\dR}^{1}\left(\A/\IG_{n}\right)+\Omega_{\A}\su\H_{\dR}^{1}\left(\A/\IG_{n}\right);
\]
\item We have an exact sequence of $\O_{L}\ot_{\Z}\O_{\IG_{n}}$-modules
\[
\left({\cal H}_{\A}^{\#}\right)^{\b}\quad=\quad0\to\Omega_{\A}\to H_{\A}^{\#}\to\ud^{p}\cdot\uo_{\A^{D}}^{\lor}\to0,
\]
in particular $H_{\A}^{\#}$ is a locally free $\O_{L}\ot_{\Z}\O_{\IG_{n}}$-module
of rank 2 and there exists a natural monomorphism $j^{\b}:\left({\cal H}_{\A}^{\#}\right)^{\b}\to\HH_{\A}^{\b}$.
\end{enumerate}
\end{lem}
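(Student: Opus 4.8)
The plan is to deduce both assertions from the construction of $H_{\A}^{\#}$ as a pushout, working $\sigma$-componentwise (equivalently, $i$-componentwise) so that everything reduces to the case of $\O_{F_i}\ot_{\Z_p}\O_{\IG_n^{(i)}}$-modules, where $\uo_{\A}$ and $\Om_{\A}$ are \emph{invertible}. For part (1), I would first establish the intersection formula $\ud^p\cdot\h(\A/\IG_n)\cap\Om_{\A}=\ud^p\cdot\uo_{\A}$ inside $\h(\A/\IG_n)$. The inclusion $\supseteq$ is clear since $\ud^p\cdot\uo_{\A}\su\ud^p\cdot\h$ and $\ud^p\cdot\uo_{\A}\su\uo_{\A}\su\Om_{\A}$ (recall $\ud\cdot\uo_{\A}=\Om_{\A}$, so $\ud^p\uo_{\A}\su\ud\uo_{\A}=\Om_{\A}$). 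For $\subseteq$, a section lying in both $\ud^p\cdot\h$ and $\Om_{\A}$ maps to zero in the quotient $\uo_{\A^D}^{\lor}=\h/\uo_{\A}$ (because $\Om_{\A}\su\uo_{\A}$), hence lands in $\uo_{\A}$; being also in $\ud^p\cdot\h$, I would check on a local trivialisation that it lies in $\ud^p\cdot\uo_{\A}$. Once the intersection is computed, the universal property of pushouts along the two injections $\ud^p\uo_{\A}\hookrightarrow\ud^p\h$ and $\ud^p\uo_{\A}\hookrightarrow\Om_{\A}$ identifies $H_{\A}^{\#}$ with the submodule sum $\ud^p\cdot\h+\Om_{\A}$ inside $\h$: the pushout of two monomorphisms from a common submodule with that submodule equal to their intersection is exactly their sum inside the ambient module.

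For part (2), I would produce the short exact sequence by applying the snake lemma (or directly chasing the pushout) to the inclusion $\Om_{\A}\hookrightarrow H_{\A}^{\#}=\ud^p\h+\Om_{\A}$. The quotient $H_{\A}^{\#}/\Om_{\A}$ equals $(\ud^p\h+\Om_{\A})/\Om_{\A}\simeq\ud^p\h/(\ud^p\h\cap\Om_{\A})=\ud^p\h/\ud^p\uo_{\A}$ by the intersection formula from (1) and the second isomorphism theorem. Since the Hodge sequence $0\to\uo_{\A}\to\h\to\uo_{\A^D}^{\lor}\to0$ is short exact with locally free terms, tensoring the identification through (equivalently multiplying the sequence by the invertible ideal $\ud^p$) gives $\ud^p\h/\ud^p\uo_{\A}\simeq\ud^p\cdot\uo_{\A^D}^{\lor}$, which is the desired cokernel. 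This yields the exact sequence $0\to\Om_{\A}\to H_{\A}^{\#}\to\ud^p\cdot\uo_{\A^D}^{\lor}\to0$.

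Local freeness of $H_{\A}^{\#}$ of rank $2$ then follows because it is an extension of two invertible $\O_{F_i}\ot_{\Z_p}\O_{\IG_n^{(i)}}$-modules (the subsheaf $\Om_{\A}$ by Proposition \ref{prop:propriet=0000E0DLOGedOmega}, the quotient $\ud^p\uo_{\A^D}^{\lor}$ being invertible as a twist of the invertible $\uo_{\A^D}^{\lor}$ by the invertible ideal $\ud^p$): an extension of locally free modules of finite rank by a locally free module is locally free of the sum of the ranks, so $H_{\A}^{\#}$ is locally free of rank $2$ over $\O_{F_i}\ot_{\Z_p}\O_{\IG_n^{(i)}}$, hence $H_{\A}^{\#}$ is locally free of rank $2$ over $\O_{L}\ot_{\Z}\O_{\IG_n}$ after reassembling the $d$ components. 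Finally, the natural monomorphism $j^{\b}:(\HH_{\A}^{\#})^{\b}\to\HH_{\A}^{\b}$ is induced by the inclusion $H_{\A}^{\#}\hookrightarrow\h$ together with the identity on $\Om_{\A}\hookrightarrow\uo_{\A}$, compatibly with the maps to the cokernels $\ud^p\uo_{\A^D}^{\lor}\hookrightarrow\uo_{\A^D}^{\lor}$; I would verify the two resulting squares commute and that all vertical maps are injective by inspecting the inclusion $\Om_{\A}\su\uo_{\A}$ and the inclusion of invertible ideals $\ud^p\su\O$. The main obstacle is the intersection computation in (1): one must argue, working in a local basis adapted to the Hodge filtration and using the relation $\ud\cdot\uo_{\A}=\Om_{\A}$, that no section of $\Om_{\A}$ outside $\ud^p\uo_{\A}$ can be divisible by $\ud^p$ after the embedding into $\h$, and I expect the cleanest route is to reduce to the invertible case per $\sigma$ and check it on stalks where $\ud$ is a non-zero-divisor.
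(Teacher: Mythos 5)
Your proposal is correct and is essentially the argument the paper intends: the paper offers no proof beyond the remark that the lemma "is a consequence of the formal properties of pushouts," and your identification of the pushout of the two inclusions with the sum $\ud^{p}\cdot\h\left(\A/\IG_{n}\right)+\Omega_{\A}$ inside $\h\left(\A/\IG_{n}\right)$ — via the intersection formula, the local splitting of the Hodge sequence, and the second isomorphism theorem — is exactly that argument spelled out. One slip: in your first containment chain the inclusion $\uo_{\A}\su\Om_{\A}$ is backwards (in fact $\Om_{\A}=\ud\cdot\uo_{\A}\su\uo_{\A}$), but your parenthetical justification and the rest of your proof use the correct direction, so nothing breaks.
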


\begin{prop}[{\cite[Proposition 6.2, pag. 64]{AI}}]
\label{prop:propriet=0000E0fascioH=000023-1}Let $P_{n}\in H_{n}^{\left(i\right),D}\left(\IG_{n}^{\left(i\right)}\right)$
be an $\O_{F_{i}}/\p_{i}^{n}$- basis and let $s_{i}=\dlog\left(P_{n}\right)$.
Then the exact sequence $\left({\cal H}_{\A}^{\#,\left(i\right)}\right)^{\b}$
realises $\left(\Omega_{\A}^{\left(i\right)},s_{i}\right)$ as an
${\rm MS}_{\O_{F_{i}}}$-subdatum of $\left(H_{\A}^{\#,\left(i\right)},s_{i}\right)$
with respect to the ideal $\gb_{n}$. Moreover the Gauss-Manin connection
$\nabla^{\left(i\right)}$ on $\H_{\dR}^{1}\left(\A/\IG_{n}\right)^{\left(i\right)}$
induces a connection
\[
\nabla^{\#,\left(i\right)}:H_{\A}^{\#,\left(i\right)}\to H_{\A}^{\#,\left(i\right)}\widehat{\ot}_{\O_{\IG_{n}^{\left(i\right)}}}\ud^{\left(i\right),-1}\cdot\Omega_{\IG_{n}^{\left(i\right)}/\X}^{1}.
\]
\end{prop}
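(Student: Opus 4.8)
The plan is to establish the two assertions of Proposition \ref{prop:propriet=0000E0fascioH=000023-1} separately: first that $\left(\Omega_{\A}^{\left(i\right)},s_{i}\right)$ is an ${\rm MS}_{\O_{F_{i}}}$-subdatum of $\left(H_{\A}^{\#,\left(i\right)},s_{i}\right)$, and second that the Gauss-Manin connection induces the claimed connection $\nabla^{\#,\left(i\right)}$ with a pole controlled by $\ud^{\left(i\right),-1}$. For the first part, I would decompose the exact sequence $\left({\cal H}_{\A}^{\#}\right)^{\b}$ of Lemma \ref{lem:descrizioneesplicitapushout} into its $\s$-components using the splitting $\O_{F_{i}}\ot_{\Z_{p}}\O_{\IG_{n}^{\left(i\right)}}\simeq\prod_{\s}\O_{\IG_{n}^{\left(i\right)},\s}$. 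Proposition \ref{prop:propriet=0000E0DLOGedOmega} already gives that $\Omega_{\A}$ is locally free of rank $1$ over $\O_{L}\ot_{\Z}\O_{\IG_{n}}$, hence each $\Omega_{\A}^{\left(i\right)}$ is invertible over $\O_{F_{i}}\ot_{\Z_{p}}\O_{\IG_{n}^{\left(i\right)}}$, and Lemma \ref{lem:descrizioneesplicitapushout}(2) identifies $H_{\A}^{\#,\left(i\right)}$ as locally free of rank $2$. The marked section $s_{i}=\dlog\left(P_{n}\right)$ is precisely the reduction that generates $\Omega_{\A}$ by the construction in Proposition \ref{prop:propriet=0000E0DLOGedOmega}, so the verification that the lifts of $s_{i}$ generate a local direct summand (the condition in Theorem \ref{thm:sezioni marcate}) reduces to checking that the inclusion $\Omega_{\A}^{\left(i\right)}\to H_{\A}^{\#,\left(i\right)}$ is split as a map of $\O_{F_{i}}\ot\O_{\IG_{n}^{\left(i\right)}}$-modules, which follows from the exact sequence being a sequence of locally free modules with invertible cokernel $\ud^{p}\cdot\uo_{\A^{D}}^{\lor}$. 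The phrase ``with respect to the ideal $\gb_{n}$'' should be read as recording the level of approximation at which $s_{i}$ is defined, namely modulo $\gb_{n}=p^{n}\Hdg^{-\frac{p^{n}-1}{p-1}}$.

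For the second part, I would start from Proposition \ref{prop:gaussmaninrispettaO_F}, which guarantees that the Gauss-Manin connection $\nabla$ on $\h\left(\A/\IG_{n}\right)$ respects the $\O_{F_{i}}$-decomposition, yielding a well-defined integrable connection $\nabla^{\left(i\right)}$ on $\H_{\dR}^{1}\left(\A/\IG_{n}\right)^{\left(i\right)}$. The task is then to show that $\nabla^{\left(i\right)}$ restricts to the subsheaf $H_{\A}^{\#,\left(i\right)}=\ud^{p}\cdot\H_{\dR}^{1}+\Omega_{\A}$ described in Lemma \ref{lem:descrizioneesplicitapushout}(1), at the cost of a controlled denominator. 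I would argue separately on the two generating pieces: on $\ud^{p}\cdot\H_{\dR}^{1}$, applying Leibniz' rule gives $\nabla^{\left(i\right)}\!\left(\ud^{p}\cdot\H_{\dR}^{1}\right)\su\ud^{p}\cdot\H_{\dR}^{1}\ot\Omega^{1}+\H_{\dR}^{1}\ot d\!\left(\ud^{p}\right)$, and since $\ud$ is an invertible ideal with $\ud^{p-1}=\Hdg$ one has $d\!\left(\ud^{p}\right)\in\ud^{p-1}\cdot\Omega^{1}=\Hdg\cdot\Omega^{1}$, so the extra term lands in $\H_{\dR}^{1}\ot\Hdg\cdot\Omega^{1}$. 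Clearing this requires multiplying the target differentials by $\ud^{-1}$, which is exactly where the twist $\ud^{\left(i\right),-1}\cdot\Omega_{\IG_{n}^{\left(i\right)}/\X}^{1}$ in the statement comes from. On the $\Omega_{\A}$ piece, Griffith transversality for Gauss-Manin sends $\uo_{\A}$ into $\H_{\dR}^{1}\ot\Omega^{1}$, and combining with the $\ud^{p}$-twist keeps the image inside $H_{\A}^{\#}\ot\ud^{-1}\Omega^{1}$.

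The main obstacle will be the careful bookkeeping of the powers of $\ud$ (equivalently, of $\Hdg$) that measure exactly how meromorphic the induced connection is. The delicate point is that $H_{\A}^{\#}$ is not literally preserved by $\nabla^{\left(i\right)}$ on the nose, but only after allowing a pole of order one along $\ud$; pinning down that the pole is \emph{precisely} $\ud^{-1}$ and no worse requires combining the defining relation $\ud^{p-1}=\Hdg$, the fact that $\nabla^{\left(i\right)}$ already preserves $\H_{\dR}^{1,\left(i\right)}$ integrally (Proposition \ref{prop:gaussmaninrispettaO_F}), and the explicit shape of the pushout in equation (\ref{eq:pushout-1}). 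I would verify the claim locally on $\spf\left(R\right)\su\IG_{n}$ over which $\Hdg$ is principal, generated by a lift $\tilde{{\rm Ha}}$ of the Hasse invariant as in Remark \ref{rem:ramificazioneIgusa}, reducing the whole computation to a rank-$2$ connection matrix with entries in $R$ twisted by powers of the chosen generator of $\ud$. Integrability of $\nabla^{\#,\left(i\right)}$ is then automatic, being inherited from the integrability of $\nabla^{\left(i\right)}$.
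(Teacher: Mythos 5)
Your first paragraph is correct and is essentially the paper's own argument for the first assertion: since the cokernel $\ud^{p}\cdot\uo_{\A^{D}}^{\lor}$ of the sequence $\left({\cal H}_{\A}^{\#,\left(i\right)}\right)^{\b}$ is locally free, the sequence is locally split, so $\Omega_{\A}^{\left(i\right)}$ is a local direct summand of $H_{\A}^{\#,\left(i\right)}$ and $\left(\Omega_{\A}^{\left(i\right)},s_{i}\right)$ is an ${\rm MS}_{\O_{F_{i}}}$-subdatum; your reading of ``with respect to the ideal $\gb_{n}$'' is also the intended one.

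The second half of your proposal has a genuine gap, located exactly at the point you yourself flag as delicate, namely the claim that on the $\Omega_{\A}$-piece ``Griffith transversality \dots keeps the image inside $H_{\A}^{\#}\ot\ud^{-1}\Omega^{1}$''. Work locally: let $\delta$ generate $\ud^{\left(i\right)}$ and let $\left\{ \omega_{\s},\eta_{\s}\right\} $ be a basis of $\H_{\dR}^{1}\left(\A/\IG_{n}\right)^{\left(i\right)}\left(\s\right)$ adapted to the Hodge filtration, so that $\left\{ \delta\omega_{\s},\delta^{p}\eta_{\s}\right\} $ is a basis of $H_{\A}^{\#,\left(i\right)}\left(\s\right)$. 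The Gauss--Manin connection relative to $B_{\ga,I}^{0}$ has the shape $\nabla\left(\omega_{\s}\right)=m_{\s}\omega_{\s}\ot\theta_{\s}+\eta_{\s}\ot\theta_{\s}$, where $\theta_{\s}$ is the Kodaira--Spencer generator of $\Omega_{\X/B_{\ga,I}^{0}}^{1}\left(\s\right)$ and the coefficient of $\eta_{\s}\ot\theta_{\s}$ is a unit (this is the content of Kodaira--Spencer; it equals $1$ in the paper's normalisation). Hence
\[
\nabla\left(\delta\omega_{\s}\right)=m_{\s}\,\delta\omega_{\s}\ot\theta_{\s}+\delta^{1-p}\left(\delta^{p}\eta_{\s}\right)\ot\theta_{\s}+\delta^{-1}\left(\delta\omega_{\s}\right)\ot d\delta,
\]
and the middle term lies in $H_{\A}^{\#,\left(i\right)}\ot\Hdg^{-1}\Omega^{1}$ and in no smaller twist: your claimed containment would require $\ud^{2}\cdot\H_{\dR}^{1}\su H_{\A}^{\#}$, i.e.\ $\ud^{2}\su\ud^{p}$, which fails for every $p>2$. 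So with absolute differentials the correct pole is $\Hdg^{-1}=\ud^{-\left(p-1\right)}$, which is exactly what the paper obtains in Section \ref{subsec:The-Gauss-Manin-connection-1} (formula (\ref{eq:nabla=000023}) and the conclusion drawn from the $\dlog\delta$ term there). The $\ud^{-1}$ bound of the present proposition is saved only by the fact that its target is $\ud^{\left(i\right),-1}\cdot\Omega_{\IG_{n}^{\left(i\right)}/\X}^{1}$, the \emph{relative} differentials over $\X$: there the form $\theta_{\s}$, being pulled back from $\Omega_{\X/B_{\ga,I}^{0}}^{1}$, maps to zero, the Kodaira--Spencer term disappears, and the only surviving polar term is $\delta^{-1}\left(\delta\omega_{\s}\right)\ot d\delta$, whose pole is exactly $\ud^{-1}$ (the term $p\delta^{p-1}\eta_{\s}\ot d\delta$ coming from $\nabla\left(\delta^{p}\eta_{\s}\right)$ is integral because $p\in\left(\delta\right)$). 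Your proposal never distinguishes $\Omega_{\IG_{n}^{\left(i\right)}/\X}^{1}$ from $\Omega_{\IG_{n}^{\left(i\right)}/B_{\ga,I}^{0}}^{1}$, and the bookkeeping you outline is carried out for the latter, where the asserted bound is simply false; fixing it is not a matter of more careful powers of $\ud$, but of identifying which differentials the statement is about. (For the record, the paper's own proof of this half is a citation to the proof of \cite[Proposition 6.3, pag. 64]{AI}; the computation above is the one the paper itself reproduces in Section \ref{subsec:The-Gauss-Manin-connection-1}.)
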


\begin{proof}
The first statement is a direct consequence of the fact the sequence
$\left({\cal H}_{\A}^{\#}\right)^{\b}$ is locally split, since $\ud^{p}\cdot\uo_{\A^{D}}^{\lor}$
is locally free. The second statement follows from the proof of \cite[Proposition 6.3, pag. 64]{AI}.
\end{proof}
\begin{defn}
In view of Proposition \ref{prop:propriet=0000E0fascioH=000023-1}
we can consider $\pi^{\left(i\right)}:\V^{\left(i\right)}:=\V_{0}\left(H_{\A}^{\#,\left(i\right)},s_{i}\right)\to\IG_{n}^{\left(i\right)}$
. We define $\frak{f}^{\left(i\right)}$ to be the composition
\[
\frak{f^{\left(i\right)}}:\V^{\left(i\right)}\overset{\pi^{\left(i\right)}}{\longrightarrow}\IG_{n}^{\left(i\right)}\overset{\frak{h}^{\left(i\right),n}}{\longrightarrow}\X.
\]
\end{defn}

\begin{rem}
It follows from Proposition \ref{prop:discesaIGngenerale} that the
$\O_{\IG_{n}^{\left(i\right)}}$-module $\tilde{\mathbb{W}}_{\kappa^{\left(i\right)}}:=\pi_{\ast}^{\left(i\right)}\O_{\V^{\left(i\right)}}\left[\kappa^{\left(i\right)}\right]$
has locally the form
\[
\tilde{\mathbb{W}}_{\kappa^{\left(i\right)}|\spf\left(R\right)}=\prod_{u\in S}R\left\langle V_{\s,u}\right\rangle \cdot\kappa^{\left(i\right)}\left(\s\left(u\right)+\gb_{n}Z_{\s,u}\right).
\]
Moreover we can endow the $\O_{\IG_{n}^{\left(i\right)}}$-module
$\tilde{\mathbb{W}}_{\kappa^{\left(i\right)}}$ with a natural increasing
filtration
\[
F^{\b}\tilde{\mathbb{W}}_{\kappa^{\left(i\right)}}=\pi_{\ast}^{\left(i\right)}F^{\b}\O_{\V^{\left(i\right)}}\left[\kappa^{\left(i\right)}\right]
\]
induced by $\left(\Omega_{\A}^{\left(i\right)},s_{i}\right)$ with
the property that
\[
\gr^{h}F^{\b}\tilde{\mathbb{W}}_{\kappa^{\left(i\right)}}=\pi_{\ast}\O_{\V_{0}\left(\Omega_{\A}^{\left(i\right)},s_{i}\right)}\left[\kappa^{\left(i\right)}\right]\ot\gr^{h}F^{\b}\pi_{\ast}^{\left(i\right)}\O_{\V_{\O_{F_{i}}}\left(\left(\ud^{p}\cdot\uo_{\A^{D}}^{\lor}\right)^{\left(i\right)}\right)}.
\]
\end{rem}

\begin{prop}
\label{prop:strutturaWtilde}The following hold:
\begin{enumerate}
\item $F^{h}\tilde{\mathbb{W}}_{\kappa^{\left(i\right)}}$ is a locally
free coherent $\O_{\IG_{n}^{\left(i\right)}}$-module for every $h\ge0$;
\item $\tilde{\mathbb{W}}_{\kappa^{\left(i\right)}}$ is isomorphism to
the completed limit $\widehat{\varinjlim}F^{h}\tilde{\mathbb{W}}_{\kappa^{\left(i\right)}}$;
\item $F^{0}\tilde{\mathbb{W}}_{\kappa^{\left(i\right)}}\simeq\varpi^{\kappa^{\left(i\right)}}$
and ${\rm Gr}^{h}F^{\b}\tilde{\mathbb{W}}_{\kappa^{\left(i\right)}}\simeq\varpi^{\kappa^{\left(i\right)}}\ot_{\IG_{n}^{\left(i\right)}}\left(\uo_{\A}^{\left(i\right)}\right)^{-h}\ot_{\IG_{n}^{\left(i\right)}}\left(\uo_{\A^{D}}^{\left(i\right)}\right)^{-h}$.
In particular, we have locally
\[
F^{h}\tilde{\mathbb{W}}_{\kappa^{\left(i\right)}}\left(\spf\left(R\right)\right)=\varpi^{\kappa^{\left(i\right)}}\ot_{R}\left(\bigoplus_{0\le k\le h}\left(\uo_{\A}^{\left(i\right)}\right)^{-k}\ot_{\IG_{n}^{\left(i\right)}}\left(\uo_{\A^{D}}^{\left(i\right)}\right)^{-k}\right).
\]
\end{enumerate}
\end{prop}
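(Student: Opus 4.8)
The plan is to deduce all three assertions from the local structure theory of Note~\ref{note:filtrazionesulfibrato} and Proposition~\ref{prop:discesaIGngenerale}, applied to the ${\rm MS}_{\O_{F_{i}}}$-datum $\left(H_{\A}^{\#,\left(i\right)},s_{i}\right)$ singled out in Proposition~\ref{prop:propriet=0000E0fascioH=000023-1}. The crucial input is the pushout exact sequence of Lemma~\ref{lem:descrizioneesplicitapushout}, which exhibits $\left(\Omega_{\A}^{\left(i\right)},s_{i}\right)$ as the marked line inside $\left(H_{\A}^{\#,\left(i\right)},s_{i}\right)$, with locally free quotient $\GG:=H_{\A}^{\#,\left(i\right)}/\Omega_{\A}^{\left(i\right)}\simeq\left(\ud^{p}\cdot\uo_{\A^{D}}^{\lor}\right)^{\left(i\right)}$, an invertible $\O_{F_{i}}\ot_{\Z_{p}}\O_{\IG_{n}^{\left(i\right)}}$-module. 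First I would use the fibre-product description $\V^{\left(i\right)}=\V_{0}\left(\Omega_{\A}^{\left(i\right)},s_{i}\right)\times_{\V_{\O_{F_{i}}}\left(\Omega_{\A}^{\left(i\right)}\right)}\V_{\O_{F_{i}}}\left(H_{\A}^{\#,\left(i\right)}\right)$, which by Note~\ref{note:filtrazionesulfibrato} yields the tensor factorisation
\[
\frak{f}_{\ast}\O_{\V^{\left(i\right)}}=\frak{f}_{\ast}\O_{\V_{0}\left(\Omega_{\A}^{\left(i\right)},s_{i}\right)}\widehat{\ot}_{\O_{\IG_{n}^{\left(i\right)}}}\pi_{\ast}\O_{\V_{\O_{F_{i}}}\left(\GG\right)},
\]
together with the filtration $F^{h}\frak{f}_{\ast}\O_{\V^{\left(i\right)}}=\frak{f}_{\ast}\O_{\V_{0}\left(\Omega_{\A}^{\left(i\right)},s_{i}\right)}\ot F^{h}\pi_{\ast}\O_{\V_{\O_{F_{i}}}\left(\GG\right)}$ and the induced description of the graded pieces.

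The second step is to pass to the $\frak{T}^{\left(i\right)}$-isotypic component for the character $\kappa^{\left(i\right)}$. Since $\frak{T}^{\left(i\right)}$ acts only through the marked-line factor $\V_{0}\left(\Omega_{\A}^{\left(i\right)},s_{i}\right)$, the projection onto the $\left[\kappa^{\left(i\right)}\right]$-part is exact and commutes with the factorisation above; applying it gives
\[
F^{h}\tilde{\mathbb{W}}_{\kappa^{\left(i\right)}}=\varpi^{\kappa^{\left(i\right)}}\ot_{\O_{\IG_{n}^{\left(i\right)}}}F^{h}\pi_{\ast}\O_{\V_{\O_{F_{i}}}\left(\GG\right)},
\]
where $\varpi^{\kappa^{\left(i\right)}}=\frak{f}_{\ast}\O_{\V_{0}\left(\Omega_{\A}^{\left(i\right)},s_{i}\right)}\left[\kappa^{\left(i\right)}\right]$ is locally free of rank $\#S$. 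Assertion (1) now follows, because $F^{h}\pi_{\ast}\O_{\V_{\O_{F_{i}}}\left(\GG\right)}$ is a truncated symmetric algebra of the locally free module $\GG$, hence locally free and coherent, and the tensor product of two locally free coherent modules is again such. Assertion (2) is inherited from the identity $\pi_{\ast}\O_{\V_{\O_{F_{i}}}\left(\GG\right)}=\widehat{\varinjlim}_{h}F^{h}\pi_{\ast}\O_{\V_{\O_{F_{i}}}\left(\GG\right)}$ of Note~\ref{note:filtrazionesulfibrato}, tensored with $\varpi^{\kappa^{\left(i\right)}}$, once one checks that the $\left[\kappa^{\left(i\right)}\right]$-projection commutes with the completed colimit.

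For assertion (3), taking $h=0$ gives $F^{0}\pi_{\ast}\O_{\V_{\O_{F_{i}}}\left(\GG\right)}=\O_{\IG_{n}^{\left(i\right)}}$ and hence $F^{0}\tilde{\mathbb{W}}_{\kappa^{\left(i\right)}}\simeq\varpi^{\kappa^{\left(i\right)}}$. For the graded pieces, the factorisation gives $\gr^{h}F^{\b}\tilde{\mathbb{W}}_{\kappa^{\left(i\right)}}\simeq\varpi^{\kappa^{\left(i\right)}}\ot_{\O_{\IG_{n}^{\left(i\right)}}}\gr^{h}\pi_{\ast}\O_{\V_{\O_{F_{i}}}\left(\GG\right)}$, and Note~\ref{note:filtrazionesulfibrato} together with Corollary~\ref{cor:identificazionepesointero} identifies the last factor with the rank-one tensor power of $\GG$ in degree $h$. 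It then remains to rewrite this line bundle, which carries the factors $\ud^{p}$ and $\uo_{\A^{D}}^{\lor}$ coming from $\GG$, in the asserted form $\left(\uo_{\A}^{\left(i\right)}\right)^{-h}\ot\left(\uo_{\A^{D}}^{\left(i\right)}\right)^{-h}$: here one invokes the relations $\ud\cdot\uo_{\A}=\Omega_{\A}$ and $\ud^{p-1}=\Hdg$ to dispose of the Hodge twist $\ud^{p}$, and the principal $\O_{L}$-polarisation of Note~\ref{note:autodualit=0000E0} to match the differentials of $\A^{D}$ with those of $\A$. Summing the resulting graded lines for $0\le k\le h$ over a trivialising $\spf\left(R\right)$, where the filtration on $\pi_{\ast}\O_{\V_{\O_{F_{i}}}\left(\GG\right)}$ splits, produces the local direct-sum description of $F^{h}\tilde{\mathbb{W}}_{\kappa^{\left(i\right)}}$.

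The main obstacle is precisely this last identification of graded pieces. The delicate point is bookkeeping the Hodge-ideal power $\ud^{p}$ through its defining relations and invoking the polarisation at the level of invertible $\O_{F_{i}}\ot_{\Z_{p}}\O$-modules rather than merely of $\O_{\IG_{n}^{\left(i\right)}}$-modules, so that the individual $\s$-components (and the product over $\s\in\sS$ implicit in the graded piece) are matched correctly. A secondary point, needed already for (1) and (2), is to verify that the $\left[\kappa^{\left(i\right)}\right]$-isotypic projection is exact and compatible with both the filtration and its completion; this is where the local form of Proposition~\ref{prop:discesaIGngenerale} is used to guarantee that the projection simply selects the $\varpi^{\kappa^{\left(i\right)}}$-factor.
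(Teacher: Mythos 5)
Your opening factorisation
\[
\frak{f}_{\ast}\O_{\V^{\left(i\right)}}=\frak{f}_{\ast}\O_{\V_{0}\left(\Omega_{\A}^{\left(i\right)},s_{i}\right)}\widehat{\ot}_{\O_{\IG_{n}^{\left(i\right)}}}\pi_{\ast}\O_{\V_{\O_{F_{i}}}\left(\GG\right)},\qquad\GG=\left(\ud^{p}\cdot\uo_{\A^{D}}^{\lor}\right)^{\left(i\right)},
\]
is fine, but the step where you take $\kappa^{\left(i\right)}$-isotypic parts contains a genuine error that then sinks assertion (3). The group $\frak{T}^{\left(i\right)}$ does \emph{not} act only through the marked-line factor $\V_{0}\left(\Omega_{\A}^{\left(i\right)},s_{i}\right)$: it acts by rescaling the whole functional on $H_{\A}^{\#,\left(i\right)}$, hence it multiplies \emph{all} fibre coordinates $X_{1,\s,u}$ and $X_{2,\s,u}$ by the same unit. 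This is precisely why Proposition \ref{prop:discesaIGngenerale} expresses the isotypic component in the twisted coordinates $V_{2,\s,u}=\left(\s\left(u\right)+\gb_{n}Z_{\s,u}\right)^{-1}X_{2,\s,u}$ rather than in the $X_{2,\s,u}$ themselves: a section such as $X_{2,\s,u}\cdot\kappa^{\left(i\right)}\left(\s\left(u\right)+\gb_{n}Z_{\s,u}\right)$ transforms by $\kappa^{\left(i\right)}$ times the tautological character, not by $\kappa^{\left(i\right)}$. Consequently your identity $F^{h}\tilde{\mathbb{W}}_{\kappa^{\left(i\right)}}=\varpi^{\kappa^{\left(i\right)}}\ot F^{h}\pi_{\ast}\O_{\V_{\O_{F_{i}}}\left(\GG\right)}$ is false even as an inclusion of subsheaves of $\frak{f}_{\ast}\O_{\V^{\left(i\right)}}$: the right-hand side does not lie in the $\kappa^{\left(i\right)}$-eigenspace.

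The consequence is that your graded pieces come out as $\varpi^{\kappa^{\left(i\right)}}\ot\GG^{\ot h}$, whereas the correct computation (the paper's: expand $V_{\s,u}^{h}=\left(\s\left(u\right)+\gb_{n}Z_{\s,u}\right)^{-h}X_{2,\s,u}^{h}$, so that the inverse $h$-th power of the marked coordinate also contributes) gives $\varpi^{\kappa^{\left(i\right)}}\ot\left(\GG\ot\left(\Omega_{\A}^{\left(i\right)}\right)^{-1}\right)^{\ot h}$. These differ by the non-trivial line bundle $\left(\Omega_{\A}^{\left(i\right)}\right)^{\ot h}$, and no juggling of the relations $\ud\cdot\uo_{\A}=\Omega_{\A}$, $\ud^{p-1}=\Hdg$, nor the polarisation of Note \ref{note:autodualit=0000E0} (which only matches $\uo_{\A^{D}}$ with $\uo_{\A}$) can alter an isomorphism class by that factor; the ``bookkeeping'' you defer to the end is therefore not a delicate point but an impossibility given your intermediate formula. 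It is the factor $X_{1}^{-h}$, missing from your description, that produces $\left(\uo_{\A}^{\left(i\right)}\right)^{-h}$ in the statement. If you want to keep the tensor-factorisation strategy, the repair is to remember the diagonal action: decompose $\pi_{\ast}\O_{\V_{\O_{F_{i}}}\left(\GG\right)}=\widehat{\bigoplus_{j\ge0}}\,{\rm Sym}^{j}\GG$ into weight spaces (degree $j$ has the same weight as $X_{1}^{j}$), so that the $\kappa^{\left(i\right)}$-component of the product pairs ${\rm Sym}^{j}\GG$ with the complementary-weight eigenspace of $\frak{f}_{\ast}\O_{\V_{0}\left(\Omega_{\A}^{\left(i\right)},s_{i}\right)}$, which by Corollary \ref{cor:identificazionepesointero} is $\varpi^{\kappa^{\left(i\right)}}\ot\left(\Omega_{\A}^{\left(i\right)}\right)^{-j}$; this restores exactly the missing twist and reduces you to the computation carried out in the paper.
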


\begin{proof}
From the explicit description of the filtration we have locally on
$\spf\left(R\right)\su\IG_{n}^{\left(i\right)}$
\[
F^{h}\tilde{\mathbb{W}}_{\kappa^{\left(i\right)}|\spf\left(R^ {}\right)}=\prod_{u\in S}R\left[V_{\s,u}\right]_{\le h}\cdot\kappa^{\left(i\right)}\left(\s\left(u\right)+\gb_{n}Z_{\s,u}\right)
\]
from which points 1. and 2., while the first part of point 3. comes
from Proposition \ref{prop:discesaIGngenerale}. To conclude, in view
again of Proposition \ref{prop:discesaIGngenerale}, we can write
locally
\begin{align*}
{\rm Gr}^{h}F^{\b}\tilde{\mathbb{W}}_{\kappa^{\left(i\right)}}\left(\spf\left(R\right)\right) & =\prod_{u\in S}\bigotimes_{\s\in\sS}R\cdot\kappa^{\left(i\right)}\left(\s\left(u\right)+\gb_{n}Z_{\s,u}\right)V_{\s,u}^{h}\\
 & =\prod_{u\in S}R\cdot\left(\prod_{\s\in\sS}\kappa^{\left(i\right)}\left(\s\left(u\right)+\gb_{n}Z_{\s,u}\right)\right)\left(\prod_{\s\in\sS}\left(\s\left(u\right)+\gb_{n}Z_{\s,u}\right)^{-h}\right)\left(\prod_{\s\in\sS}X_{2,\s,u}^{h}\right)\\
 & =\left(\varpi^{\kappa^{\left(i\right)}}\ot_{\IG_{n}^{\left(i\right)}}\left(\uo_{\A}^{\left(i\right)}\right)^{-h}\ot_{\IG_{n}^{\left(i\right)}}\left(\uo_{\A^{D}}^{\left(i\right)}\right)^{-h}\right)\left(\spf\left(R\right)\right).
\end{align*}
\end{proof}

\subsection{Descending to $\protect\X$}

Define the formal group $\frak{T}^{{\rm ext}}=\T\left(\Z_{p}\right)\cdot\frak{T}$.
Over $\X$ it decomposes as
\[
\prod_{i=1}^{d}\frak{T}^{{\rm ext},\left(i\right)}=\prod_{i=1}^{d}\O_{F_{i}}\inv\cdot\left(1+\gb_{n}{\rm Res}_{\O_{F_{i}}|\Z_{p}}\G_{a}\right)
\]
The group $\frak{T}^{{\rm ext},\left(i\right)}$ comes with a natural
action on $\frak{f}^{\left(i\right)}:\V_{0}\left(\Omega_{\A}^{\left(i\right)},s_{i}\right)\to\X$
that we now describe. Let $u:S\to\X$ be a morphism of formal schemes,
then a point of $\V_{0}\left(\Omega_{\A}^{\left(i\right)},s_{i}\right)\left(u\right)$
is a pair $\left(\rho^{\left(i\right)},v\right)$ where $\rho^{\left(i\right)}:S\to\IG_{n}^{\left(i\right)}$
lifts $u$ and $v\in\V_{0}\left(\Omega_{\A}^{\left(i\right)},s_{i}\right)\left(\rho^{\left(i\right)}\right)$.
In the same fashion a point of $\V_{0}\left(H_{\A}^{\#,\left(i\right)},s_{i}\right)\left(u\right)$
is a pair $\left(\rho^{\left(i\right)},w\right)$ where $\rho^{\left(i\right)}:S\to\IG_{n}^{\left(i\right)}$
lifts $u$ and $w\in\V_{0}\left(H_{\A}^{\#,\left(i\right)},s_{i}\right)\left(\rho^{\left(i\right)}\right)$.
Let $\overline{\gl}\in\left(\O_{F_{i}}/\p_{i}^{n}\right)\inv$ be
seen as an element of the Galois group of the adic generic fibre ${\cal IG}_{n}^{\left(i\right)}\to{\cal X}$
and, by functoriality, as an $\X$-automorphism of $\IG_{n}^{\left(i\right)}$.
Denote 
\[
\overline{\gl}^{\ast}:\Omega_{\A}^{\left(i\right)}\to\Omega_{\A}^{\left(i\right)},\quad H_{\A}^{\#,\left(i\right)}\to H_{\A}^{\#,\left(i\right)}
\]
 the isomorphisms it induces, which are characterised $\mod\gb_{n}$
by $\overline{\gl}^{\ast}\left(s_{i}\right)=\overline{\gl}s_{i}$.
For a point $\left(\rho^{\left(i\right)},v\right)\in\V_{0}\left(\Omega_{\A}^{\left(i\right)},s_{i}\right)\left(u\right)$
we define 
\begin{equation}
\gl\ast\left(\rho^{\left(i\right)},v\right)=\left(\overline{\gl}\circ\rho^{\left(i\right)},\gl^{-1}v\right)\label{eq:azionebrutta}
\end{equation}
and for a point $\left(\rho^{\left(i\right)},w\right)\in\V_{0}\left(H_{\A}^{\#,\left(i\right)},s_{i}\right)\left(u\right)$
we define 
\begin{equation}
\gl\ast\left(\rho^{\left(i\right)},w\right)=\left(\overline{\gl}\circ\rho^{\left(i\right)},\gl^{-1}w\right).\label{eq:azionebrutta-1}
\end{equation}
As in the proof of \cite[Lemme 5.1, pag. 21]{AIP2}, formula (\ref{eq:azionebrutta})
defines an action of $\T\left(\Z_{p}\right)$ on $\frak{f}^{\left(i\right)}:\V_{0}\left(\Omega_{\A}^{\left(i\right)},s_{i}\right)\to\X$
which is compatible with that of $\frak{T}$ on $\pi^{\left(i\right)}$,
thus giving an action of $\frak{T}^{{\rm ext},\left(i\right)}$ on
$\frak{f}^{\left(i\right)}$. 
\begin{defn}
Let $\frak{w}^{\kappa^{\left(i\right)}}=\left(\frak{f}_{\ast}^{\left(i\right)}\O_{\V_{0}\left(\Omega_{\A}^{\left(i\right)},s_{i}\right)}\right)\left[\kappa^{\left(i\right)}\right]$
that is, $\frak{w}^{\kappa^{\left(i\right)}}$ is the $\O_{\X}$-submodule
of $\frak{f}_{\ast}^{\left(i\right)}\O_{\V_{0}\left(\Omega_{\A}^{\left(i\right)},s_{i}\right)}$
given by sections that tranforms via $\kappa^{\left(i\right)}$ under
the action of $\frak{T}^{{\rm ext},\left(i\right)}$. 
\end{defn}

\begin{rem}
\label{rem:Isomorfismotorsore}In \cite[Section 4.1, pag. 15]{AIP1}
the $\frak{T}$-torsor $f:\frak{F}_{n}\to\IG_{n}$ is considered,
defined by
\[
\frak{F}_{n}\left(R\right)=\left\{ \omega\in\Omega_{\A}\left(R\right)\,\middle|\,\overline{\omega}=s\right\} .
\]
Let $\spf\left(R\right)\su\IG_{n}^{\left(i\right)}$ be connected,
then for every $\omega_{\s}^{\left(i\right)}\in\frak{F}_{n,\s}^{\left(i\right)}\left(R\right)$
we can consider the map $\Omega_{\A}^{\left(i\right)}\left(\s\right)\to R$
defined by $f_{\s}\left(\omega_{\s}^{\left(i\right)}\right)=\s\left(u\right)$.
This gives a map 
\[
\phi_{n,\s}^{\left(i\right)}:\frak{F}_{n,\s}^{\left(i\right)}\to\V_{0,\s}^{u}\left(\Omega_{\A}^{\left(i\right)},s_{i}\right)
\]
over $\IG_{n}^{\left(i\right)}$. On the other hand, given a section
$f_{\s}\in\V_{0,\s}^{u}\left(\Omega_{\A}^{\left(i\right)},s_{i}\right)\left(R\right)$
then $\s\left(u\right)f_{\s}^{\lor}\in\Omega_{\A}^{\left(i\right)}\left(\s\right)$
lifts $s$ (since $\overline{f}_{\s}\left(\overline{\s\left(u\right)}\cdot\overline{f_{\s}^{\lor}}\right)=\overline{f}_{\s}\left(s\right)$)
and this gives a map 
\[
\V_{0,\s}^{u}\left(\Omega_{\A}^{\left(i\right)},s_{i}\right)\to\frak{F}_{n,\s}^{\left(i\right)}
\]
again over $\IG_{n}^{\left(i\right)}$ and it is clear from the functorial
description that this is indeed the inverse of $\phi_{n,\s}^{\left(i\right)}$.
If $h\in\frak{T}^{\left(i\right)}\left(R\right)$, then $\phi_{n,\s}^{\left(i\right)}\left(h\omega_{\s}^{\left(i\right)}\right)$
is defined by $h\omega_{\s}^{\left(i\right)}\mapsto\s\left(u\right)$,
hence we conclude that
\[
\phi_{n,\s}^{\left(i\right)}\left(h\omega_{\s}^{\left(i\right)}\right)=h^{-1}\phi_{n,\s}^{\left(i\right)}\left(\omega_{\s}^{\left(i\right)}\right).
\]
We proved that for every $u\in S$ there is a natural isomorphism
\[
\phi_{n}:\frak{F}_{n}\to\V_{0}^{u}\left(\Omega_{\A},s\right)
\]
over $\IG_{n}$ that inverts the action of $\frak{T}$. The importance
of this isomorphism lies in the fact that from \cite[Section 4.2, pag. 16]{AIP1}
the sheaf of overconvergent forms is
\[
\frak{w}_{n,r,I}^{{\rm over}}:=\left(\frak{h}^{n}\circ f\right)_{\ast}\O_{\frak{F}_{n}}\left[\kappa^{-1}\right]
\]
over $\X_{r,I}$. Hence we obtain that
\[
\frak{w}_{n,r,I}^{\kappa}=\prod_{u\in S}\frak{w}_{n,r,I}^{{\rm over}}.
\]
\end{rem}

\begin{note}
\label{note:provache=00005Cfrakwinvertibile}Let $\spf\left(A\right)\su\X$
be an open affine over which $\Hdg$ is free, say generated by $\tilde{\Ha}$,
and let $\spf\left(R_{n}^{\left(i\right)}\right)\su\IG_{n}^{\left(i\right)}$
be its inverse image. In view of Remark \ref{rem:ramificazioneIgusa},
for $j=0,\dots,n$, there exist elements $c_{j}\in\tilde{\Ha}^{-1}R_{n}^{\left(i\right)}$
such that
\begin{enumerate}
\item $c_{j}\in\tilde{\Ha}^{-\left(p^{j}-p\right)/\left(p-1\right)}R_{n}^{\left(i\right)}$;
\item ${\rm Tr}_{R_{j}/R_{j-1}}\left(c_{j}\right)=c_{j-1}$ (here we see
$R_{j-1}$ as a subring of $R_{j}$) and $c_{0}=1$. 
\end{enumerate}
Pick a generator $\kappa^{\left(i\right)}\left(\s\left(u\right)+\gb_{n}Z_{\s,u}\right)$
of $\varpi_{\s,u}^{\kappa^{\left(i\right)}}$ over $R_{n}^{\left(i\right)}$
(compare with Proposition \ref{prop:discesaIGngenerale}) and a lift
$\tilde{\gamma}\in\O_{F_{i}}\inv$ for every $\gamma\in\left(\O_{F_{i}}/p^{n}\right)\inv$.
Note that the element
\[
b_{\s,u}=\sum_{\gamma\in\left(\O_{F_{i}}/p^{n}\right)\inv}\kappa^{\left(i\right)}\left(\tilde{\gamma}\right)\sigma\ast\left(\kappa^{\left(i\right)}\left(\s\left(u\right)+\gb_{n}Z_{\s,u}\right)c_{n}\right)
\]
a priori lies in $\text{\ensuremath{\tilde{\Ha}}}^{-1}R_{n}^{\left(i\right)}\left\langle Z_{\s,u}\right\rangle $.
In view of Proposition \ref{prop:analiticitadecarattereuniversale}
we can write $\kappa^{\left(i\right)}\left(\s\left(u\right)+\gb_{n}Z_{\s,u}\right)=\k^{\left(i\right)}\left(\s\left(u\right)\right)+qh$
for some $h\in R_{n}^{\left(i\right)}\left\langle Z_{\s,u}\right\rangle $
hence
\[
b_{\s,u}=\kappa^{\left(i\right)}\left(\s\left(u\right)\right)\sum_{\gamma\in\left(\O_{F_{i}}/p^{n}\right)\inv}\kappa^{\left(i\right)}\left(\tilde{\gamma}\right)\gamma\ast c_{n}+q\sum_{\gamma\in\left(\O_{F_{i}}/p^{n}\right)\inv}\kappa^{\left(i\right)}\left(\tilde{\gamma}\right)\gamma\ast\left(hc_{n}\right).
\]
The term $q\sum_{\gamma\in\left(\O_{F_{i}}/p^{n}\right)\inv}\kappa^{\left(i\right)}\left(\tilde{\gamma}\right)\gamma\ast\left(hc_{n}\right)$
lives in $q\tilde{\Ha}^{-\left(p^{n}-p\right)/\left(p-1\right)}R_{n}^{\left(i\right)}\left\langle Z_{\s,u}\right\rangle $,
but $n\le r+k$ implies that $\tilde{\Ha}^{\left(p^{n}-p\right)/\left(p-1\right)}\vert\tilde{\Ha}^{p^{r+k}}$
, but $p\ga^{-p^{k}}\in B_{\ga,I}^{0}$ and $\ga\Hdg^{-p^{r+1}}\su R_{n}^{\left(i\right)}$
and it follows that $\tilde{\Ha}^{p^{r+k+1}}\vert p$. Therefore,
denoting $\left(R_{n}^{\left(i\right)}\right)^{\text{\textdegree\textdegree}}$
the ideal of topologically nilpotent elements in $R_{n}^{\left(i\right)}$,
we have 
\[
q\sum_{\gamma\in\left(\O_{F_{i}}/p^{n}\right)\inv}\kappa^{\left(i\right)}\left(\tilde{\gamma}\right)\gamma\ast\left(hc_{n}\right)\in\left(R_{n}^{\left(i\right)}\right)^{\text{\textdegree\textdegree}}\cdot R_{n}^{\left(i\right)}\left\langle Z\right\rangle .
\]
Consider now $\sum_{\gamma\in\left(\O_{F_{i}}/p^{n}\right)\inv}\kappa^{\left(i\right)}\left(\tilde{\gamma}\right)\gamma\ast c_{n}$:
first note that in view of the choice of the sequence $c_{j}$ and,
since $\kappa^{\left(i\right)}\left(\tilde{\gamma}\right)\in1+\left(R_{n}^{\left(i\right)}\right)^{\text{\textdegree\textdegree}}$
we have 
\begin{align*}
\sum_{\gamma\in1+p^{t-1}\O_{F_{i}}/p^{t}\O_{F_{i}}}\kappa^{\left(i\right)}\left(\tilde{\gamma}\right)\gamma\ast c_{t} & \equiv\sum_{\gamma\in1+p^{t-1}\O_{F_{i}}/p^{t}\O_{F_{i}}}\gamma\ast c_{t}\mod\left(R_{n}^{\left(i\right)}\right)^{\text{\textdegree\textdegree}}\\
 & =c_{t-1}
\end{align*}
therefore 
\begin{align*}
\sum_{\gamma\in\left(\O_{F_{i}}/p^{n}\right)\inv}\kappa^{\left(i\right)}\left(\tilde{\gamma}\right)\gamma\ast c_{n} & \equiv1\mod\left(R_{n}^{\left(i\right)}\right)^{\text{\textdegree\textdegree}}
\end{align*}
and $b_{\s,u}$ is a well-defined element of $\frak{w}_{\s,u}^{\kappa^{\left(i\right)}}=\frak{f}_{\ast}^{\left(i\right)}\O_{\V_{0,\s}^{u}\left(\Omega_{\A}^{\left(i\right)},s_{i}\right)}\left[\kappa^{\left(i\right)}\right]$
over $\spf\left(A\right)$. From topological Nakayama's Lemma \cite[Theorem 8.4, pag. 58]{Mat}
we conclude that $\frak{w}_{u}^{\kappa^{\left(i\right)}}=\bigotimes_{\s}\frak{w}_{\s,u}^{\kappa^{\left(i\right)}}$
is free over $A$ with basis $\bigotimes_{\s}b_{\s,u}$.
\end{note}

\begin{defn}
Set $\mathbb{W}_{\kappa^{\left(i\right)}}=\frak{f}_{\ast}^{\left(i\right)}\O_{\V^{\left(i\right)}}\left[\kappa^{\left(i\right)}\right]$.
\end{defn}

\begin{thm}
\label{thm:filtrazionesuX}With setting and notations as in Note \ref{note:provache=00005Cfrakwinvertibile},
the action of $\frak{T}^{{\rm ext},\left(i\right)}$ on $\frak{f}_{\ast}^{\left(i\right)}\O_{\V^{\left(i\right)}}$
preserves the filtration $F^{\b}\frak{f}_{\ast}^{\left(i\right)}\O_{\V^{\left(i\right)}}$
induced by the ${\rm MS}_{\O_{F_{i}}}$-subdatum $\left(\Omega_{\A}^{\left(i\right)},s_{i}\right)$
of $\left(H_{\A}^{\#,\left(i\right)},s_{i}\right)$. Set
\[
F^{\b}\mathbb{W}_{\kappa^{\left(i\right)}}=\frak{f}_{\ast}^{\left(i\right)}F^{\b}\O_{\V^{\left(i\right)}}\left[\kappa^{\left(i\right)}\right],
\]
then 
\begin{enumerate}
\item $F^{h}\mathbb{W}_{\kappa^{\left(i\right)}}$ is a locally free coherent
$\O_{\X}$-module;
\item $\mathbb{W}_{\kappa^{\left(i\right)}}$ is isomorphic to the completed
limit $\widehat{\varinjlim}F^{h}\mathbb{W}_{\kappa^{\left(i\right)}}$,
in particular $\mathbb{W}_{\kappa^{\left(i\right)}}$ is a flat $\O_{\X}$-module;
\item $F^{0}\mathbb{W}_{\kappa^{\left(i\right)}}\simeq\frak{w}^{\kappa^{\left(i\right)}}$
and ${\rm Gr}^{h}F^{\b}\mathbb{W}_{\kappa^{\left(i\right)}}\simeq\frak{w}^{\kappa^{\left(i\right)}}\ot_{\O_{\X}}\uo_{\A}^{-h}\ot_{\O_{\X}}\uo_{\A^{D}}^{-h}$.
In particular, we have locally
\[
F^{h}\mathbb{W}_{\kappa^{\left(i\right)}}\left(\spf\left(A\right)\right)\simeq\frak{w}^{\kappa^{\left(i\right)}}\ot_{A}{\rm Sym}_{A}^{\le h}\left(\uo_{\A}^{-1}\ot_{A}\uo_{\A^{D}}^{-1}\right).
\]
\end{enumerate}
\end{thm}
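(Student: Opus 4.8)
The plan is to deduce the statement over $\X$ from its counterpart over $\IG_n^{(i)}$, namely Proposition \ref{prop:strutturaWtilde}, by descending along the finite map $\frak{h}^{(i),n}\colon\IG_n^{(i)}\to\X$. Since $\frak{f}^{(i)}=\frak{h}^{(i),n}\circ\pi^{(i)}$ and $\frak{T}^{\mathrm{ext},(i)}=\T(\Z_p)\cdot\frak{T}^{(i)}$, taking $\kappa^{(i)}$-eigensections for the full $\frak{T}^{\mathrm{ext},(i)}$-action factors as taking $\frak{T}^{(i)}$-eigensections over $\IG_n^{(i)}$ (which produces $F^\b\tilde{\mathbb{W}}_{\kappa^{(i)}}$) followed by taking $\T(\Z_p)$-eigensections for the residual Galois action of $(\O_{F_i}/\p_i^n)\inv$; in other words
\[
F^h\mathbb{W}_{\kappa^{(i)}}=\bigl(\frak{h}_*^{(i),n}F^h\tilde{\mathbb{W}}_{\kappa^{(i)}}\bigr)\bigl[\kappa^{(i)}\bigr].
\]
I would first record that this action preserves the filtration: the $\frak{T}^{(i)}$-part does so by the construction of $F^\b\tilde{\mathbb{W}}_{\kappa^{(i)}}$, while the Galois action is realised by the isomorphisms $\overline{\gl}\ua$ of \eqref{eq:azionebrutta}--\eqref{eq:azionebrutta-1}, which send the subdatum $(\Omega_\A^{(i)},s_i)$ to $(\Omega_\A^{(i)},\overline{\gl}s_i)$, hence preserve the submodule $\Omega_\A^{(i)}$ and therefore the induced filtration of Note \ref{note:filtrazionesulfibrato}.

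For point (3) the decisive input is Note \ref{note:provache=00005Cfrakwinvertibile}, which already establishes $F^0\mathbb{W}_{\kappa^{(i)}}\simeq\frak{w}^{\kappa^{(i)}}$ together with its local freeness over $A$ by producing the explicit basis $\bigotimes_\s b_{\s,u}$. For the higher graded pieces I would descend the isomorphism of Proposition \ref{prop:strutturaWtilde}(3),
\[
\gr^hF^\b\tilde{\mathbb{W}}_{\kappa^{(i)}}\simeq\varpi^{\kappa^{(i)}}\ot(\uo_\A^{(i)})^{-h}\ot(\uo_{\A^D}^{(i)})^{-h},
\]
under the $\kappa^{(i)}$-eigensection functor: the factor $\varpi^{\kappa^{(i)}}$ descends to $\frak{w}^{\kappa^{(i)}}$ precisely by the Note, whereas $\uo_\A^{(i)}$ and $\uo_{\A^D}^{(i)}$ are pulled back from $\X$ and so descend unchanged. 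This gives $\gr^hF^\b\mathbb{W}_{\kappa^{(i)}}\simeq\frak{w}^{\kappa^{(i)}}\ot_{\O_\X}\uo_\A^{-h}\ot_{\O_\X}\uo_{\A^D}^{-h}$, and, the filtration splitting over the local rings of $\X$, the local $\mathrm{Sym}^{\le h}$ description follows.

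Points (1) and (2) then follow formally. For (1), $F^h\mathbb{W}_{\kappa^{(i)}}$ carries a finite filtration whose successive quotients are the locally free sheaves $\gr^0,\dots,\gr^h$, so over the local rings of $\X$ it is a finitely generated module with a finite filtration by free quotients, hence free; as local freeness is local this proves (1). For (2), the formation of $\kappa^{(i)}$-eigensections is exact and commutes with completed direct limits, so from the identity $\frak{f}_*^{(i)}\O_{\V^{(i)}}=\widehat{\varinjlim_h}F^h\frak{f}_*^{(i)}\O_{\V^{(i)}}$ (pushed forward from the analogous one on $\IG_n^{(i)}$) one reads off $\mathbb{W}_{\kappa^{(i)}}\simeq\widehat{\varinjlim_h}F^h\mathbb{W}_{\kappa^{(i)}}$; flatness then follows from this completed-limit structure, the graded pieces being locally free.

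The crux, and the step I expect to be the main obstacle, is the integral descent underlying point (3). The map $\frak{h}^{(i),n}$ is finite étale only on the analytic fibre and is ramified integrally, so naive invariants of a locally free $\O_{\IG_n^{(i)}}$-module need not be locally free over $\O_\X$. What makes the descent work is the trace-compatible system $c_n\in\tilde{\Ha}^{-(p^n-1)/(p-1)}R_n^{(i)}$ of Remark \ref{rem:ramificazioneIgusa}, whose controlled Hasse-invariant denominators, combined with the analyticity of $\kappa$ from Proposition \ref{prop:analiticitadecarattereuniversale}, force the averaged element $b_{\s,u}$ to be congruent to a unit modulo topologically nilpotent elements; topological Nakayama's lemma then upgrades this to an honest local basis. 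Once $F^0$ is secured in the Note, the remaining filtration steps descend for free, since by the graded-piece computation they differ from $\frak{w}^{\kappa^{(i)}}$ only by line bundles already defined on $\X$.
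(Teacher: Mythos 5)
Your proposal is correct and follows essentially the same route as the paper's proof: both reduce the theorem to Proposition \ref{prop:strutturaWtilde} over $\IG_{n}^{\left(i\right)}$, identify $F^{0}\mathbb{W}_{\kappa^{\left(i\right)}}$ with $\frak{w}^{\kappa^{\left(i\right)}}$ directly from the definition of the filtration (with Note \ref{note:provache=00005Cfrakwinvertibile} supplying local freeness), descend the graded pieces using that $\uo_{\A}$ and $\uo_{\A^{D}}$ are already defined on $\X$, and obtain points (1) and (2) as formal consequences of point (3). Your extra detail --- the two-step eigensection factorisation through the residual Galois action and the emphasis on the trace-compatible elements $c_{n}$ plus topological Nakayama --- is just an explicit unpacking of what the paper's terse proof and the cited Note already contain.
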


\begin{proof}
Note that, in view of Proposition \ref{prop:strutturaWtilde}, points
1. and 2. follow from point 3. The isomorphism $F^{0}\mathbb{W}_{\kappa^{\left(i\right)}}\simeq\frak{w}^{\kappa^{\left(i\right)}}$
comes from the very definition of the filtration
\begin{align*}
F^{0}\mathbb{W}_{\kappa^{\left(i\right)}} & =F^{0}\frak{f}_{\ast}^{\left(i\right)}\O_{\V^{\left(i\right)}}\left[\kappa^{\left(i\right)}\right]\\
 & =\frak{f}_{\ast}^{\left(i\right)}\O_{\V_{0}\left(\Omega_{\A}^{\left(i\right)},s_{i}\right)}\left[\kappa^{\left(i\right)}\right]\\
 & =\frak{w}^{\kappa^{\left(i\right)}}.
\end{align*}
Finally, in view of Proposition \ref{prop:strutturaWtilde}, since
$\omega_{\A}$ is defined on $\X$, we see that 
\begin{align*}
\gr^{h}F^{\b}\mathbb{W}_{\kappa^{\left(i\right)}} & =\frak{f}_{\ast}^{\left(i\right)}\O_{\V_{0}\left(\Omega_{\A}^{\left(i\right)},s_{i}\right)}\left[\kappa^{\left(i\right)}\right]\ot_{\O_{\X}}\gr^{h}F^{\b}\O_{\V_{\O_{F_{i}}}\left(\left(\ud^{p}\cdot\uo_{\A^{D}}^{\lor}\right)^{\left(i\right)}\right)}\\
 & =\frak{w}^{\kappa^{\left(i\right)}}\ot_{\O_{\X}}\uo_{\A}^{-h}\ot_{\O_{\X}}\uo_{\A^{D}}^{-h}.
\end{align*}
\end{proof}

\subsection{A connection on $\mathbb{W}_{\kappa}$\label{subsec:The-Gauss-Manin-connection-1}}
\begin{lem}
\label{lem:connessionescendesuIG}The Gauss-Manin connection $\nabla_{{\rm GM}}:H_{\dR}^{1}\left(\A/\X\right)\to H_{\dR}^{1}\left(\A/\X\right)\ot_{\O_{\X}}\Omega_{\X/B_{\ga,I}^{0}}^{1}$
induces an integrable ${\rm MS}_{\O_{F_{i}}}$-connection 
\[
\nabla^{\#,\left(i\right)}:H_{\A}^{\#,\left(i\right)}\to H_{\A}^{\#,\left(i\right)}\ot_{\O_{\IG{}_{n}^{\prime\left(i\right)}}}\Omega_{\IG{}_{n}^{\prime\left(i\right)}/B_{\ga,I}^{0}}^{1}.
\]
\end{lem}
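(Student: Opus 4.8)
The plan is to produce $\nabla^{\#,(i)}$ by restricting to $H_{\A}^{\#,(i)}$ the Gauss--Manin connection relative to the weight base $B_{\ga,I}^{0}$, after pulling everything back to the Igusa tower, and then to verify the three defining properties of an ${\rm MS}_{\O_{F_{i}}}$-connection listed in Section \ref{subsec:Connections-on-the}: integrability, compatibility with the $\O_{F_{i}}$-structure, and horizontality of the marked section. Two of these come essentially for free. By Proposition \ref{prop:gaussmaninrispettaO_F} the base-relative Gauss--Manin connection on $H_{\dR}^{1}\left(\A/\X\right)$ is integrable and preserves the $\s$-decomposition, so it descends to an integrable $\nabla^{(i)}$ on $H_{\dR}^{1,(i)}$ respecting each $\E\left(\s\right)$; since all the constructions below ($\ud$, the pushout $H_{\A}^{\#}$, the pullback to $\IG_{n}^{\prime(i)}$) are $\O_{F_{i}}$-equivariant, any connection obtained from $\nabla^{(i)}$ by restriction and pullback automatically satisfies $\nabla^{\#,(i)}\left(H_{\A}^{\#,(i)}\left(\s\right)\right)\su H_{\A}^{\#,(i)}\left(\s\right)\widehat{\ot}\,\Omega^{1}$ and remains integrable.

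First I would pull $\nabla^{(i)}$ back along $\frak{h}^{(i),n}:\IG_{n}^{\prime(i)}\to\X$. Because $\A_{/\IG_{n}'}$ is the pullback of $\A_{/\X}$, both $H_{\dR}^{1,(i)}$ and its connection pull back, and the map of cotangent complexes gives the exact sequence $\frak{h}^{\ast}\Omega_{\X/B_{\ga,I}^{0}}^{1}\to\Omega_{\IG_{n}^{\prime(i)}/B_{\ga,I}^{0}}^{1}\to\Omega_{\IG_{n}^{\prime(i)}/\X}^{1}\to0$. The pulled-back connection is then valued in $\Omega_{\IG_{n}^{\prime(i)}/B_{\ga,I}^{0}}^{1}$, and composing with the projection onto $\Omega_{\IG_{n}^{\prime(i)}/\X}^{1}$ recovers the $\X$-relative connection of Proposition \ref{prop:propriet=0000E0fascioH=000023-1}.

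The heart of the argument is to show that this connection restricts to $H_{\A}^{\#,(i)}$. Using the explicit description $H_{\A}^{\#}=\ud^{p}\cdot H_{\dR}^{1}+\Omega_{\A}$ of Lemma \ref{lem:descrizioneesplicitapushout}, I would compute $\nabla$ on the two kinds of generators $\ud^{p}\eta$ (with $\eta\in H_{\dR}^{1}$) and $\go=\ud\mu$ (with $\mu\in\uo_{\A}$, using $\ud\cdot\uo_{\A}=\Omega_{\A}$). The terms $\ud^{p}\nabla\eta$ and the $\uo_{\A}$-component of $\ud\nabla\mu$ land in $H_{\A}^{\#}\widehat{\ot}\,\Omega^{1}$ directly, while the cross terms produced by $\nabla\left(\ud\right)$ and by the Kodaira--Spencer (i.e. $\uo_{\A^{D}}^{\lor}$) component are handled by rewriting $d\ud=\tfrac{1}{p-1}\ud\,\Hdg^{-1}\,d\Hdg$ via $\ud^{p-1}=\Hdg$ and invoking Griffiths transversality for Gauss--Manin. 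A priori these terms carry a bounded power of $\Hdg^{-1}$, exactly as in Proposition \ref{prop:propriet=0000E0fascioH=000023-1}; the point is that on the model $\IG_{n}'$ and with the base-relative target $\Omega_{\IG_{n}^{\prime(i)}/B_{\ga,I}^{0}}^{1}$ they become integral, so that $\nabla^{\#,(i)}$ is an honest (non-meromorphic) connection. This pole-clearing step --- reconciling the base-relative horizontal part coming from Proposition \ref{prop:gaussmaninrispettaO_F} with the vertical part of Proposition \ref{prop:propriet=0000E0fascioH=000023-1} and checking integrality on $\IG_{n}'$ --- is the main obstacle.

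Finally I would verify that the marked section is horizontal, i.e. $\overline{\nabla^{\#,(i)}}\left(s_{i}\right)=0$ modulo $\gb_{n}$. Here $s_{i}=\dlog_{H_{n}}\left(P_{n}\right)$, and the vanishing follows from the crystalline behaviour of the canonical subgroup: the $\dlog$ image of the universal generator $P_{n}$ of $H_{n}^{(i),D}$ is horizontal for the Gauss--Manin connection modulo $\gb_{n}=p^{n}\Hdg^{-\left(p^{n}-1\right)/\left(p-1\right)}$, because $P_{n}$ is a torsion section and $\dlog$ is compatible with the connection, exactly as in the proof of \cite[Proposition 6.3]{AI}. Combined with the integrability and $\O_{F_{i}}$-compatibility noted above, this shows that $\nabla^{\#,(i)}$ is an integrable ${\rm MS}_{\O_{F_{i}}}$-connection, as claimed.
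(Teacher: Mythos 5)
Your proposal takes essentially the same approach as the paper: the paper's entire proof is the combination of Proposition \ref{prop:gaussmaninrispettaO_F} (giving integrability and compatibility with the $\O_{F_{i}}$-structure) with the argument of \cite[Proposition 6.3]{AI} (giving the restriction of the connection to $H_{\A}^{\#,\left(i\right)}$, the integrality over $\IG_{n}^{\prime\left(i\right)}$, and the horizontality of the marked section $s_{i}=\dlog\left(P_{n}\right)$), which is exactly the structure you describe. The details you sketch --- the computation on the generators $\ud^{p}\eta$ and $\ud\mu$, the $d\ud$ cross terms, and the pole-clearing --- are precisely the content of that cited result, so your expansion is consistent with, and somewhat more explicit than, the paper's two-line proof.
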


\begin{proof}
It follows from Proposition \ref{prop:gaussmaninrispettaO_F} and
\cite[Proposition 6.3, pag. 66]{AI}.
\end{proof}
In view of Lemma \ref{lem:connessionescendesuIG} we have an integrable
connection
\[
\nabla_{\s,u}^{\left(i\right)}:\pi_{\ast}\O_{\V_{0,\s}^{u}\left(H_{\A}^{\#,\left(i\right)},s_{i}\right)}\to\pi_{\ast}\O_{\V_{0,\s}^{u}\left(H_{\A}^{\#,\left(i\right)},s_{i}\right)}\widehat{\ot}_{\O_{\IG_{n}^{\prime}{}^{\left(i\right)}}}\Omega_{\IG_{n}^{\prime}{}^{\left(i\right)}/B_{\ga,i}^{0}}^{1}.
\]
We want to see how the connection $\nabla_{\s,u}^{\left(i\right)}$
descends to $\IG_{n}^{\left(i\right)}$. Recall that locally we have
\[
H_{\A}^{\#}=\Omega_{\A}+\delta^{p}H_{\dR}^{1}\left(\A/\IG_{n}\right)\su H_{\dR}^{1}\left(\A/\IG_{n}\right)
\]
so let us check that 
\[
\nabla\left(\delta^{p}H_{\dR}^{1}\left(\A/\IG_{n}\right)\right)\su\delta^{p}H_{\dR}^{1}\left(\A/\IG_{n}\right)\ot_{\O_{\IG_{n}}}\Omega_{\IG_{n}/B_{\ga,I}^{0}}^{1}.
\]
For $x\in H_{\dR}^{1}\left(\A/\IG_{n}\right)$ we have
\[
\nabla\left(\delta^{p}x\right)=\delta^{p}\nabla\left(x\right)+p\delta^{p-1}x\ot d\delta,
\]
but by its very construction we have $p\in\left(\delta\right)$, whence
$p\delta^{p-1}\su\left(\delta\right)^{p}$. A problem arises when
we take $\Omega_{\A}$ into account: recall that $\gd\cdot\uo_{\A}=\Omega_{\A}$.
Let $\spf\left(R\right)\su\IG_{n}$ be an open affine subset with
preimage $\spf\left(R^{\prime}\right)\su\IG_{n}'$ and such that its
image in $\X$ is contained in an open affine $\spf\left(A\right)$
over which the sequence $\HH_{\A}^{\b}$ is split. Let $\omega,\eta,\gd$
be bases of $\uo_{\A|R},\uo_{\A|R}^{-1}$ and $\underline{\gd}_{R}$
respectively, then $\left\{ \omega,\eta\right\} $ is a basis for
$H_{\dR}^{1}\left(\A/\IG_{n}\right)_{|R}$ and $\left\{ \gd\omega,\delta^{p}\eta\right\} $
is a basis for $H_{\A|R}^{\#}$. We have a Kodaira-Spencer isomorphism
\[
{\rm KS}:\uo_{\A}\to\uo_{\A}^{-1}\ot_{\O_{\X}}\Omega_{\X/B_{\ga,I}^{0}}^{1}
\]
where we recall that the $\O_{L}$-structure is the one induced by
this isomorphism, namely
\[
\Omega_{\X/B_{\ga,I}^{0}}^{1}\simeq\uo_{\A}^{\ot2}
\]
 and a generator $\theta_{\s}^{\left(i\right)}$ of $\Omega_{\X/B_{\ga,I}^{0}}^{1,\left(i\right)}\left(\s\right)$
over $A$ characterised by the property that 
\[
{\rm KS}\left(\omega_{\s}^{\left(i\right)}\right)=\eta_{\s}^{\left(i\right)}\ot\theta_{\s}^{\left(i\right)}.
\]
We have elements $m_{\s}^{\left(i\right)},t_{\s}^{\left(i\right)},s_{\s}^{\left(i\right)}\in A$
such that
\[
\nabla^{\left(i\right)}:\begin{cases}
\omega_{\s}^{\left(i\right)}\mapsto m{}_{\s}^{\left(i\right)}\omega{}_{\s}^{\left(i\right)}\ot\theta{}_{\s}^{\left(i\right)}+\eta{}_{\s}^{\left(i\right)}\ot\theta{}_{\s}^{\left(i\right)}\\
\eta{}_{\s}^{\left(i\right)}\mapsto t{}_{\s}^{\left(i\right)}\omega{}_{\s}^{\left(i\right)}\ot\theta{}_{\s}^{\left(i\right)}+s{}_{\s}^{\left(i\right)}\eta{}_{\s}^{\left(i\right)}\ot\theta{}_{\s}^{\left(i\right)}
\end{cases}
\]
hence 
\begin{align}
\nabla^{\#}\left(\delta^{\left(i\right)}\omega{}_{\s}^{\left(i\right)}\right) & =\delta^{\left(i\right)}\nabla^{\#}\left(\omega{}_{\s}^{\left(i\right)}\right)+\omega{}_{\s}^{\left(i\right)}\ot d\delta^{\left(i\right)}\label{eq:nabla=000023}\\
 & =m_{\s}^{\left(i\right)}\delta^{\left(i\right)}\omega{}_{\s}^{\left(i\right)}\ot\theta{}_{\s}^{\left(i\right)}+\delta^{\left(i\right)}\eta{}_{\s}^{\left(i\right)}\ot\theta{}_{\s}^{\left(i\right)}+\omega{}_{\s}^{\left(i\right)}\ot d\delta^{\left(i\right)}\nonumber \\
\nabla^{\#}\left(\left(\delta^{\left(i\right)}\right)^{p}\eta{}_{\s}^{\left(i\right)}\right) & =\left(\delta^{\left(i\right)}\right)^{p}\nabla^{\#}\left(\eta{}_{\s}^{\left(i\right)}\right)+\eta{}_{\s}^{\left(i\right)}\ot p\left(\delta^{\left(i\right)}\right)^{p-1}d\delta^{\left(i\right)}\nonumber \\
 & =t_{\s}^{\left(i\right)}\left(\delta^{\left(i\right)}\right)^{p}\omega{}_{\s}^{\left(i\right)}\ot\theta{}_{\s}^{\left(i\right)}+s{}_{\s}^{\left(i\right)}\left(\delta^{\left(i\right)}\right)^{p}\eta{}_{\s}^{\left(i\right)}\ot\theta{}_{\s}^{\left(i\right)}+\left(\delta^{\left(i\right)}\right)^{p}\eta{}_{\s}^{\left(i\right)}\ot p\dlog\delta^{\left(i\right)},\nonumber 
\end{align}
In view of the term $\dlog\delta^{\left(i\right)}$ we conclude that
the connection $\nabla^{\#,\left(i\right)}$ descends to an integrable
${\rm MS}_{\O_{F_{i}}}$-connection
\begin{equation}
\nabla^{\#,\left(i\right)}:H_{\A}^{\#,\left(i\right)}\to H_{\A}^{\#,\left(i\right)}\ot_{\O_{\IG_{n}^{\left(i\right)}}}\Hdg^{-1}\cdot\Omega_{\IG{}_{n}^{\left(i\right)}/B_{\ga,I}^{0}}^{1}.\label{eq:=00005Cnabla=000023(i)}
\end{equation}
\begin{prop}
\label{prop:esisteconnessionesuW}The integrable ${\rm MS}_{\O_{F_{i}}}$-connection
\[
\nabla_{\s}^{\#,\left(i\right)}:H_{\A}^{\#,\left(i\right)}\left(\s\right)\to H_{\A}^{\#,\left(i\right)}\left(\s\right)\ot_{\O_{\IG_{n}^{\left(i\right)}}}\Hdg^{-1}\cdot\Omega_{\IG{}_{n}^{\left(i\right)}/B_{\ga,I}^{0}}^{1}
\]
induces integrable connections
\[
\nabla_{\kappa^{\left(i\right)}}:\tilde{\mathbb{W}}_{\kappa^{\left(i\right)},\s,u}\to\tilde{\mathbb{W}}_{\kappa^{\left(i\right)},\s,u}\widehat{\ot}_{\O_{\IG_{n}^{\left(i\right)}}}\Hdg^{-1}\cdot\Omega_{\IG_{n}^{\left(i\right)}/B_{\ga,I}^{0}}^{1}
\]
and
\[
\nabla_{\kappa^{\left(i\right)}}:\mathbb{W}_{\kappa^{\left(i\right)},\s,u}\to\mathbb{W}_{\kappa^{\left(i\right)},\s,u}\widehat{\ot}_{\O_{\X}}\Hdg^{-1}\cdot\Omega_{\X/B_{\ga,I}^{0}}^{1}
\]
that respects Griffith transversality property for the filtration
$F^{\b}\tilde{\mathbb{W}}_{\kappa^{\left(i\right)},\s,u}$. Moreover
the induced map on graded pieces
\[
\gr^{h}F^{\b}\nabla_{\kappa^{\left(i\right)}}:\ga^{-1}{\rm Gr}^{h}F^{\b}\mathbb{W}_{\kappa^{\left(i\right)},\s,u}\to\ga^{-1}{\rm Gr}^{h+1}F^{\b}\mathbb{W}_{\kappa^{\left(i\right)},\s,u}\widehat{\ot}_{\O_{\X_{r,I}^{L}}}\Omega_{\X^{L}/\left(B_{\ga,I}^{0}\ot\O_{L}\right)}^{1}
\]
is the composition of an isomorphism and the product by $u_{I}^{\left(i\right)}-h$
(cfr. Note \ref{note:esisteelementou_I}).
\end{prop}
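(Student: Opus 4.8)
The plan is to obtain both connections by restricting to $\kappa^{(i)}$-eigenspaces the connection produced by the general machinery of Section~\ref{subsec:Connections-on-the}, and then to read off the graded map by an explicit computation in the coordinates of Proposition~\ref{prop:discesaIGngenerale}. Concretely, Lemma~\ref{lem:connessionescendesuIG} equips $(H_{\A}^{\#,(i)},s_{i})$ with an integrable $\mathrm{MS}_{\O_{F_{i}}}$-connection $\nabla^{\#,(i)}$ valued in $\Hdg^{-1}\Omega_{\IG_{n}^{(i)}/B_{\ga,I}^{0}}^{1}$; applying the Proposition of Section~\ref{subsec:Connections-on-the} to each $\s$-component yields the integrable connection $\nabla_{\s,u}^{(i)}$ on $\pi_{\ast}\O_{\V_{0,\s}^{u}(H_{\A}^{\#,(i)},s_{i})}$, which by construction is Griffith transversal for the filtration induced by the subdatum $(\Omega_{\A}^{(i)},s_{i})$ and has $\O$-linear graded pieces. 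First I would check that $\nabla_{\s,u}^{(i)}$ commutes with the action of $\frak{T}^{\mathrm{ext},(i)}$: the vertical $\frak{T}^{(i)}$-part only rescales the marked-section coordinate and $\nabla_{0}$ is built functorially from $\nabla^{\#,(i)}$, while the $\T(\Z_{p})$-part acts through the Galois group of $\IG_{n}^{(i)}\to\X$, hence over $\X$, so it is compatible with the (functorial) Gauss--Manin connection. This equivariance guarantees that $\nabla_{\s,u}^{(i)}$ preserves the $\kappa^{(i)}$-eigensubsheaves, giving the asserted connection on $\tilde{\mathbb{W}}_{\kappa^{(i)},\s,u}$ over $\IG_{n}^{(i)}$ and, after Galois descent along the finite étale cover $\IG_{n}^{(i)}\to\X$ (under which relative differentials are unchanged), on $\mathbb{W}_{\kappa^{(i)},\s,u}$ over $\X$.

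Integrability and Griffith transversality for these restricted connections are then inherited directly from the Proposition of Section~\ref{subsec:Connections-on-the}, since the eigenspace decomposition is by construction compatible with the filtration $F^{\b}$. It remains to identify the induced map $\gr^{h}F^{\b}\nabla_{\kappa^{(i)}}$, and this is the computational heart of the statement. Using Proposition~\ref{prop:discesaIGngenerale}, write $W=\s(u)+\gb_{n}Z_{\s,u}$ and $V=V_{\s,u}=W^{-1}X_{2,\s,u}$, so that a generator of $\gr^{h}F^{\b}$ is $\kappa^{(i)}(W)\,V^{\,h}$ and the filtration is by degree in $V$.

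For the computation I would feed the explicit matrix of $\nabla^{\#,(i)}$ in the local basis $\{\delta^{(i)}\omega_{\s}^{(i)},(\delta^{(i)})^{p}\eta_{\s}^{(i)}\}$ — in which the Kodaira--Spencer off-diagonal entry is $\Hdg^{-1}\theta_{\s}^{(i)}$ and the $\dlog\delta^{(i)}$ terms appear as in the formulas preceding \eqref{eq:=00005Cnabla=000023(i)} — into the mirrored action of $\nabla_{0}$ on the coordinates $X_{1,\s,u},X_{2,\s,u}$. Leibniz then gives, with $\beta_{W}:=\nabla_{0}(W)/W$,
\[
\nabla_{0}\bigl(\kappa^{(i)}(W)V^{\,h}\bigr)=\kappa^{(i)}(W)\bigl(u_{I}^{(i)}\,\beta_{W}\,V^{\,h}+h\,V^{\,h-1}\nabla_{0}(V)\bigr),
\]
where the factor $u_{I}^{(i)}$ comes from $\dlog\kappa^{(i)}(W)=u_{I}^{(i)}\,\dlog W$ via the exponential expression of Note~\ref{note:esisteelementou_I}. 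Both $\beta_{W}$ and $\nabla_{0}(V)$ contain a single top-degree contribution $V\,\Hdg^{-1}\theta_{\s}^{(i)}$, respectively $-V^{2}\,\Hdg^{-1}\theta_{\s}^{(i)}$, coming from the off-diagonal Kodaira--Spencer entry; all remaining terms have $V$-degree $\le h$ and die on $\gr^{h+1}$. Collecting the $V^{\,h+1}$-coefficient leaves exactly $(u_{I}^{(i)}-h)\,\Hdg^{-1}\theta_{\s}^{(i)}$, so that $\gr^{h}F^{\b}\nabla_{\kappa^{(i)}}$ sends $\kappa^{(i)}(W)V^{\,h}$ to $(u_{I}^{(i)}-h)\,\kappa^{(i)}(W)V^{\,h+1}\ot\Hdg^{-1}\theta_{\s}^{(i)}$.

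Finally I would interpret this as the claimed composition. The assignment $V^{\,h}\mapsto V^{\,h+1}\ot\theta_{\s}^{(i)}$ is, after the identifications of Theorem~\ref{thm:filtrazionesuX} and the Kodaira--Spencer isomorphism $\Omega_{\X^{L}/(B_{\ga,I}^{0}\ot\O_{L})}^{1}\simeq\uo_{\A}^{\ot2}$, the canonical isomorphism $\gr^{h}\xrightarrow{\sim}\gr^{h+1}\ot\Omega^{1}$ — here one uses the principal polarisation $\uo_{\A}\simeq\uo_{\A^{D}}$ of Note~\ref{note:autodualit=0000E0} to absorb the discrepancy $\uo_{\A}\ot\uo_{\A^{D}}^{-1}$ between source and target. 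The pole $\Hdg^{-1}$ becomes integral after twisting by $\ga^{-1}$ (since $\Hdg^{p^{r+1}}\mid\ga$ forces $\ga\Hdg^{-1}\in\O$), which is exactly why the statement is phrased for the $\ga^{-1}$-scaled graded modules. The main obstacle is precisely this last computation: isolating the correct top-degree component, checking that only the Kodaira--Spencer entry survives on $\gr^{h+1}$, and tracking the coefficient as the difference $u_{I}^{(i)}-h$ of the two independent Leibniz contributions — the weight-derivative $u_{I}^{(i)}$ and the $-h$ from differentiating $V^{\,h}$; the bookkeeping of the $\Hdg^{-1}$-pole against $\ga^{-1}$ is a secondary but necessary subtlety.
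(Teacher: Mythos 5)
Your proposal is correct and follows essentially the same route as the paper's proof: reduction to $\IG_{n}^{\left(i\right)}$ by descent along the cover which is finite étale after inverting $\Hdg$, then the local computation in the basis $\left\{ \delta^{\left(i\right)}\omega_{\s}^{\left(i\right)},\left(\delta^{\left(i\right)}\right)^{p}\eta_{\s}^{\left(i\right)}\right\} $ using the exponential form $\kappa^{\left(i\right)}=\exp\left(u_{I}^{\left(i\right)}\log\right)$, isolating the Kodaira--Spencer off-diagonal term as the only degree-raising contribution with coefficient $u_{I}^{\left(i\right)}-h$. Your Leibniz bookkeeping with $\beta_{W}$ and $\nabla_{0}\left(V\right)$ is the same first-order expansion the paper performs through the matrix of $\epsilon_{\s}^{\left(i\right)}$ in Grothendieck's formalism, and your extra remarks (torus equivariance, absorbing $\uo_{\A}\ot\uo_{\A^{D}}^{-1}$ via the principal polarisation, the $\ga^{-1}$ versus $\Hdg^{-1}$ bookkeeping) only make explicit points the paper leaves implicit.
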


\begin{proof}
Note that the map $\IG_{n}^{\left(i\right)}\to\X^{\left(i\right)}$
is finite étale after inverting $\Hdg$ with Galois group $\left(\O_{F_{i}}/p^{n}\right)\inv$,
therefore faithfully flat descent applies to $\nabla_{\kappa^{\left(i\right)}}$
and we reduce to the proof of the existence of the connection over
$\IG_{n}^{\left(i\right)}$. Let $\spf\left(R\right)\su\IG_{n}^{\left(i\right)}$
be an open affine over which $H_{\A}^{\#,\left(i\right)}$ is free,
say with basis $f_{\s}^{\left(i\right)}=\delta^{\left(i\right)}\omega{}_{\s}^{\left(i\right)},e{}_{\s}^{\left(i\right)}=\left(\delta^{\left(i\right)}\right)^{p}\eta{}_{\s}^{\left(i\right)}$,
and let $\spf\left(R'\right)\su\IG_{n}^{\prime}{}^{\left(i\right)}$
be its inverse image. Using notations as in \ref{subsec:Connections-on-the}
we have an ${\cal A}_{R'/A}$-linear isomorphism
\[
\epsilon{}_{\s}^{\left(i\right)}:H_{\A}^{\#,\left(i\right)}\left(\s\right)\ot_{j_{2}}{\cal A}_{R'/A}\to H_{\A}^{\#,\left(i\right)}\left(\s\right)\ot_{j_{1}}{\cal A}_{R'/A}
\]
which corresponds to a matrix
\[
\left(\begin{matrix}a_{\s}^{\left(i\right)} & b_{\s}^{\left(i\right)}\\
c_{\s}^{\left(i\right)} & d_{\s}^{\left(i\right)}
\end{matrix}\right)\in{\rm GL}_{2}\left({\cal A}_{R'/A}\right)
\]
in terms of the basis we picked. The condition that $\epsilon{}_{\s}^{\left(i\right)}\ot_{\Delta}R'={\rm id}_{H_{\A}^{\#,\left(i\right)}\left(\s\right)}$
translates into an equality
\[
\left(\begin{matrix}a_{\s}^{\left(i\right)} & b_{\s}^{\left(i\right)}\\
c_{\s}^{\left(i\right)} & d_{\s}^{\left(i\right)}
\end{matrix}\right)=\left(\begin{matrix}\left(1,\omega_{a_{\s}^{\left(i\right)}}\right) & \left(0,\omega_{b_{\s}^{\left(i\right)}}\right)\\
\left(0,\omega_{c_{\s}^{\left(i\right)}}\right) & \left(1,\omega_{d_{\s}^{\left(i\right)}}\right)
\end{matrix}\right).
\]
This gives
\[
\nabla^{\#}\left(f_{\s}^{\left(i\right)}\right)=f{}_{\s}^{\left(i\right)}\ot\left(0,\omega_{a_{\s}^{\left(i\right)}}\right)+e{}_{\s}^{\left(i\right)}\ot\left(0,\omega_{c_{\s}^{\left(i\right)}}\right),
\]
on the other hand we have formula \ref{eq:nabla=000023} telling that
\[
\nabla^{\#}\left(f_{\s}^{\left(i\right)}\right)=m{}_{\s}^{\left(i\right)}f{}_{\s}^{\left(i\right)}\ot\theta{}_{\s}^{\left(i\right)}+\gd^{\left(i\right)}\eta{}_{\s}^{\left(i\right)}\ot\theta{}_{\s}^{\left(i\right)}+\omega{}_{\s}^{\left(i\right)}\ot d\gd^{\left(i\right)}.
\]
Since $\omega{}_{\s}^{\left(i\right)}$and $\eta{}_{\s}^{\left(i\right)}$
are linearly independent we conclude that 
\[
\theta{}_{\s}^{\left(i\right)}=\left(\delta^{\left(i\right)}\right)^{p-1}\omega_{c_{\s}^{\left(i\right)}}
\]
and note that this equality is over $A$ (that is, over $\X$) since
$\left(\delta^{\left(i\right)}\right)^{p-1}$ is in $\Hdg$ (it is
a generator indeed). This shows that $\left(\delta^{\left(i\right)}\right)^{p-1}\omega_{c_{\s}^{\left(i\right)}}$
is a local generator of $\Omega_{\IG_{n}^{\left(i\right)}/B_{\ga,I}^{0}}^{1}\left(\s\right)$.
Consider the local sections $X_{1,\s},X_{2,\s}$ of $\V_{\O_{F_{i}}}\left(H_{\A}^{\#,\left(i\right)}\left(\s\right)\right)$
obtained from $f_{\s}^{\left(i\right)}$ and $e_{\s}^{\left(i\right)}$.
The action of $\epsilon{}_{\s}^{\left(i\right)}$ on them is given
by
\[
\epsilon{}_{\s}^{\left(i\right)}\left(\begin{matrix}X\\
Y
\end{matrix}\right)=\left(\begin{matrix}a_{\s}^{\left(i\right)}X+b{}_{\s}^{\left(i\right)}Y\\
c_{\s}^{\left(i\right)}X+d{}_{\s}^{\left(i\right)}Y
\end{matrix}\right),
\]
then, setting $V=X_{2,\s}X_{1,\s}^{-1}$ and keeping in mind that
the ideal $\Omega_{\X/B_{\ga,I}^{0}}^{1}\su{\cal A}$ is a square-zero
ideal, we compute
\begin{align*}
\epsilon{}_{\s}^{\left(i\right)}\left(\kappa^{\left(i\right)}\left(X_{1,\s}\right)V_{\s}^{h}\right) & =\kappa^{\left(i\right)}\left(\e{}_{\s}^{\left(i\right)}\left(X_{1,\s}\right)\right)\left(\frac{c_{\s}^{\left(i\right)}X_{1,\s}+d{}_{\s}^{\left(i\right)}X_{2,\s}}{a_{\s}^{\left(i\right)}X_{1,\s}+b{}_{\s}^{\left(i\right)}X_{2,\s}}\right)^{h}\\
 & =\kappa^{\left(i\right)}\left(X_{1,\s}\right)\kappa^{\left(i\right)}\left(a_{\s}^{\left(i\right)}+b{}_{\s}^{\left(i\right)}V_{\s}\right)\left(c_{\s}^{\left(i\right)}+d{}_{\s}^{\left(i\right)}V_{\s}\right)^{h}\left(a_{\s}^{\left(i\right)}+b{}_{\s}^{\left(i\right)}V_{\s}\right)^{-h}\\
 & =\kappa^{\left(i\right)}\left(X_{1,\s}\right)\exp\left(u_{I}^{\left(i\right)}\log\left(\left(1,0\right)+\left(\left(0,\omega_{a_{\s}^{\left(i\right)}}\right)+\left(0,\omega_{b_{\s}^{\left(i\right)}}\right)V_{\s}\right)\right)\right)\\
 & \quad\left(\left(0,\omega_{c_{\s}^{\left(i\right)}}\right)+\left(1,\omega_{d_{\s}^{\left(i\right)}}\right)V_{\s}\right)^{h}\left(\left(1,\omega_{a_{\s}^{\left(i\right)}}\right)+\left(0,\omega_{b_{\s}^{\left(i\right)}}\right)V_{\s}\right)^{-h}\\
 & =\kappa^{\left(i\right)}\left(X_{1,\s}\right)\left(\left(1,0\right)+u_{I}\left(\left(0,\omega_{a_{\s}^{\left(i\right)}}\right)+\left(0,\omega_{b_{\s}^{\left(i\right)}}\right)V_{\s}\right)\right)\\
 & \quad\left(\left(1,h\omega_{d}\right)V_{\s}^{h}+\left(0,h\omega_{c}\right)V_{\s}^{h-1}\right)\left(\left(1,0\right)-h\left(\left(0,\omega_{a}\right)+\left(0,\omega_{b_{\s}^{\left(i\right)}}\right)V_{\s}\right)\right)\\
 & =\kappa^{\left(i\right)}\left(X_{1,\s}\right)\left(V^{h}+\left(0,h\omega_{d_{\s}^{\left(i\right)}}\right)V_{\s}^{h}-\left(0,h\omega_{a_{\s}^{\left(i\right)}}\right)V_{\s}^{h}-\left(0,h\omega_{c}\right)V_{\s}^{h+1}\right.\\
 & \quad\left.+\left(0,h\omega_{c}\right)V_{\s}^{h-1}+\left(0,u\omega_{a}\right)V_{\s}^{h}+\left(0,u\omega_{c}\right)V_{\s}^{h+1}\right),
\end{align*}
hence
\begin{align}
\nabla^{\#}\left(\kappa^{\left(i\right)}\left(X_{1,\s}\right)V_{\s}^{h}\right) & =\e\left(\kappa^{\left(i\right)}\left(X_{1,\s}\right)V_{\s}^{h}\right)-\kappa^{\left(i\right)}\left(X_{1,\s}\right)V_{\s}^{h}\equiv\label{eq:descrzionenabla}\\
 & \equiv\left(u_{I}^{\left(i\right)}-h\right)V_{\s}^{h+1}\ot\omega_{c_{\s}^{\left(i\right)}}\in{\rm Gr}^{h+1}F^{\b}\mathbb{W}_{\kappa^{\left(i\right)},\s,u}\widehat{\ot}_{\O_{\X}}\Omega_{\X/B_{\ga,I}^{0}}^{1}.
\end{align}
The form $\omega_{c_{\s}^{\left(i\right)}}$ generates $\ga^{-1}\Omega_{\X/B_{\ga,I}^{0}}^{1}$,
hence, after pulling back to $\V_{0,\s}^{u}\left(H_{\A}^{\#,\left(i\right)},s_{i}\right)$
(corresponding to $X_{1,\s}\mapsto\s\left(u\right)+\gb_{n}Z_{\s,u}$)
we deduce that $\gr^{h}F^{\b}\nabla_{\kappa^{\left(i\right)}}$ gives
an isomorphism
\[
\ga^{-1}{\rm Gr}^{h}F^{\b}\mathbb{W}_{\kappa^{\left(i\right)},\s,u}\simeq\ga^{-1}\left(u_{I}^{\left(i\right)}-h\right){\rm Gr}^{h+1}F^{\b}\mathbb{W}_{\kappa^{\left(i\right)},\s,u}\widehat{\ot}_{\O_{\X}}\Omega_{\X/B_{\ga,I}^{0}}^{1}.
\]
\end{proof}
\newpage{}

\end{document}